\theoremstyle{plain}
\newtheorem{Pocz}{Poczatek}[section]
\newtheorem{Proposition}[Pocz]{Proposition}
\newtheorem{Theorem}[Pocz]{Theorem}
\newtheorem{Corollary}[Pocz]{Corollary}
\newtheorem{Lemma}[Pocz]{Lemma}
\newtheorem{Notation}[Pocz]{Notation}
\newtheorem{Example}[Pocz]{Example}
\theoremstyle{definition}
\newtheorem{Definition}[Pocz]{Definition}
\theoremstyle{remark}
\numberwithin{equation}{section}
\title[The Stone's Representation Theorems and compactifications of a discrete space]
{The Stone's Representation Theorems and compactifications of a discrete space}
\author{Hussain Rashed}
\address{University of Tennessee, Knoxville, TN 37996, USA}
\email{hrashed1@vols.utk.edu}
\date{ \today
}
\keywords{Boolean Algebras, Boolean rings, Stone's Spaces, Stone's Representation Theorems, Spectrum of Rings, Compactifications of a discrete space, Stone-$\breve{C}$ech compactification of a discrete space}
\subjclass[2000]{Primary 54D35; Secondary 20F69}
\begin{document}
\maketitle
\begin{center}
\today
\end{center}

\tableofcontents

\begin{abstract}
This paper is meant to give a short exposition of the Stone's Representation Theorems. We provide three equivalent approaches to construct a Stone's space from a given Boolean algebra. Finally, we utilize the Stone's Representation Theorems to study the compactifications of a discrete space.
\end{abstract}

\section{Introduction}
 The work by Marshall Stone \cite{Stone} associated to every Boolean algebra $B$ a totally disconnected compact Hausdorff space $X_B$ called the Stone's space from which $B$ can be restored. Conversely, every Stone's space $X$ can be reconstructed from an associated Boolean algebra $B_X$. These two facts are the core of the Stone's Representation Theorem for Boolean algebras, and the Stone's Representation Theorem for Stone's spaces, respectively. The main aim of this self-contained exposition is to provide detailed proofs of the Stone's Representation Theorems using modern tools. The exposition can be outlined in the following points:\\
 $\bullet$ The family of all clopen subsets of a topological space $X$ forms a Boolean algebra $Clop(X)$ \ref{ClopBoolean}. If $B$ is a Boolean algebra and $X_B$ is its assigned Stone's space, then $B$ is the clopen Boolean algebra of $X_B$ \ref{StoneRepThOfBooleanAlg}. Furthermore, a homomorphism $\varphi:B_1\to B_2$ between Boolean algebras induces a continuous map $\widetilde{\varphi}:X_{B_2}\to X_{B_1}$ between their associated Stone's spaces; if $\varphi:B_1\to B_2$ is an isomorphism, then $\widetilde{\varphi}:X_{B_2}\to X_{B_1}$ is a homeomorphism \ref{IsoBooleansHaveHomeoStonesSpaces}. The Stone's space $X_B$ associated to a given Boolean algebra can be obtained in three equivalent ways:\\
($1$) As the space of all ultrafilters of $B$ and denoted by $\mathcal{U}(B)$.\\
($2$) As the space of all homomorphisms from $B$ to $\mathbb{Z}_2$ and denoted by $Hom(B,\mathbb{Z}_2)$.\\
($3$) As the spectrum of its induced Boolean ring $R_B$ and denoted by $Spec(R_B)$.\\
The Stone's spaces $\mathcal{U}(B)$, $Hom(B,\mathbb{Z}_2)$ and $Spec(R_B)$ are homeomorphic \ref{StoneAsSetOfHom}, \ref{StoneAsSpec}.\\
$\bullet$ A continuous map $f:X\to Y$ between topological spaces induces a homomorphism $\widetilde{f}:Clop(Y)\to Clop(X)$ between their clopen Boolean algebras. If $f:X\to Y$ is a homeomorphism, $\widetilde{f}:Clop(Y)\to Clop(X)$ is an isomorphism \ref{ClopBoolean}. Moreover, if $X$ is a Stone's space, then it is the Stone's space associated to the Boolean algebra $Clop(X)$ \ref{StoneRepThOfBooleanAlg}.\\
In a categorical language, the above can be re-stated as that there is a duality between the category of Boolean algebras and the category of Stone's spaces.\\ 
$\bullet$ When $X$ is a discrete space; a topological space $Y$ is a compactification of $X$ if and only if it is homeomorphic to a Stone's space $\mathcal{U}(\mathcal{B})$ for some a sub-Boolean algebra $\mathcal{B}$ of $\mathcal{P}(X)$ that separates points of $X$ \ref{ComptionOFDiscreteSp}; when $\mathcal{B}=\mathcal{P}(X)$, $\mathcal{U}(\mathcal{B})$ is the Stone-$\breve{C}$ech compactification of $X$. In particular, $Hom(B,\mathbb{Z}_2)$ and $Spec(R_B)$, where $B=\mathcal{P}(X)$, are distinct approaches to  construct the Stone-$\breve{C}$ech compactification of $X$ \ref{WaysToConsSTCompOfDiscrete}.\\

\section{Boolean algebras and their homomorphisms}
\begin{Definition}
A \textbf{Boolean algebra} $(B,+,\cdot,\neg)$ is a set $B$ equipped with two binary operations $+$, $\cdot $ and a unary operation $\neg$ called the complement, where the following are satisfied:\\
$(1)$ $+$ and $\cdot $ are \textbf{commutative} and \textbf{associative}, that is:\\
$\bullet$ $x+y=y+x$ and $x\cdot y=y\cdot x$, for all $x,y\in B$.\\
$\bullet$ $x+(y+z)=(x+y)+z$ and $x\cdot(y\cdot z)=(x\cdot y)\cdot z$, for all $x,y,z \in B$.\\
$(2)$ $+$ is \textbf{distributive over} $\cdot $, and $\cdot $ is distributive over $+$, that is:\\
$\bullet$ $x+(y\cdot z)=(x+y)\cdot(x+z)$, for all $x,y,z \in B$.\\
$\bullet$ $x\cdot(y+ z)=(x\cdot y)+(x\cdot z)$, for all $x,y,z \in B$.\\
$(3)$ $+$ and $\cdot $ are \textbf{connected by the absorption law}, that is:\\
$\bullet$ $x+(x\cdot y)=x\cdot(x+y)=x$, for all $x,y\in B$.\\
$(4)$ There exist two elements $0,1\in B$ satisfying:\\
$\bullet$ $x+(\neg x)=1$, for all $x\in B$;\\
$\bullet$ $x\cdot(\neg x)=0$, for all $x\in B$.
\end{Definition}
\begin{Example}\label{PowersetBooleanAlg}
Consider the power set $\mathcal{P}(X)$ of $X$; then $(\mathcal{P}(X),+,\cdot,\neg)$ is a Boolean algebra, where $+:=\cup$, $\cdot:=\cap$, $0:=\emptyset$, $1:=X$ and $\neg A:=X\setminus A$ for all $A\in \mathcal{P}(X)$. The Boolean algebra $(\mathcal{P}(X),\cup,\cap,\neg)$ is called the \textbf{power set Boolean algebra}.
\end{Example}
\begin{Definition}
Let $(B,+,\cdot,\neg)$ be a Boolean algebra, a subset $A\subset B$ is called a \textbf{sub-Boolean algebra} if $(A,+,\cdot,\neg)$ is itself a Boolean algebra.
\end{Definition}
\begin{Example}
Let $X$ be a topological space. The family of all clopen subsets $Clop(X):=\{U\subset X:U$ is clopen$\}$ is a sub-Boolean algebra of $\mathcal{P}(X)$. The Boolean algebra $(Clop(X),+,\cdot,\neg)$ is called the \textbf{clopen Boolean algebra} associated with $X$. 
\end{Example}
\begin{Example}
Let $G$ be a group acts on $X$; a subset $A\subset X$ is called \textbf{$G$-commensurated} or \textbf{$G$-almost invariant} if $g\cdot A\Delta A$ is a finite subset of $X$ for all $g\in G$. The family of all $G$-commensurated subsets of $X$ is denoted by $Comm_G(X)$. $Comm_G(X)$ is a sub-Boolean algebra of $\mathcal{P}(X)$ \cite{Corn}.
\end{Example}
\begin{Example}\label{SpaceOEnds}
Let $G$ be a group acts on $X$; define a relation $\sim$ on $Comm_G(X)$ by: for all $A,B\in Comm_G(X)$, $A\sim B$ if and only if $A\Delta B$ is finite. Then:\\
($a$) The relation $\sim$ is an equivalence relation on $Comm_G(X)$.\\
($b$) The family of all equivalence classes $\faktor{Comm_G(X)}{\sim}:=\{[A]:A\in Comm_G(X)\}$ of $Comm_G(X)$ is a Boolean algebra with operations:\\
$\bullet$ $[A]+[B]:=[A\cup B]$;\\
$\bullet$ $[A]\cdot[B]:=[A\cap B]$;\\
$\bullet$ $\neg[A]:=[X\setminus A]$.
\end{Example}

The fact that the binary relations $+$ and $\cdot $ being connected by the absorption law gives us the following properties.
\begin{Proposition}
Let $(B,+,\cdot,\neg)$ be a Boolean algebra, the following are satisfied:\\
$(a)$ $\cdot$ is \textbf{idempotent}, that is $x^2:=x\cdot x=x$, for all $x\in B$.\\
$(b)$ $+$ is \textbf{idempotent}, that is $2x:=x+x=x$ for all $x\in B$.\\
$(c)$ $1\cdot x=x$, for all $x\in B$.\\
$(d)$ $1+ x=1$, for all $x\in B$.\\
$(e)$ $0+x=x$, for all $x\in B$.\\
$(f)$ $0\cdot x=0$, for all $x\in B$.
\end{Proposition}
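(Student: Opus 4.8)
The plan is to extract everything from the two forms of the absorption law, $x + (x\cdot y) = x$ and $x\cdot(x+y) = x$, used in tandem with commutativity, associativity, and the defining identities $x + \neg x = 1$ and $x\cdot\neg x = 0$ from axiom (4). The natural order is to prove the two idempotency statements (a) and (b) first, since the substitution trick they need is the only step that is not a mechanical rearrangement, and then to read off (c)--(f) by feeding $\neg x$ into the absorption laws and simplifying with the idempotency already in hand.

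For (a) I would start from $x\cdot(x+z) = x$, specialize $z$ to $x\cdot y$, and note that the inner expression $x + (x\cdot y)$ collapses to $x$ by the other absorption identity; the left-hand side then reads $x\cdot x$, so $x\cdot x = x$. Part (b) is the mirror image: start from $x + (x\cdot z) = x$, specialize $z$ to $x + y$, and use $x\cdot(x+y) = x$ to collapse the inner term, which gives $x + x = x$.

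For the remaining four parts I would bring in $0$ and $1$. Putting $y = \neg x$ in $x\cdot(x+y) = x$ yields $x\cdot 1 = x$, hence (c) after commuting; putting $y = \neg x$ in $x + (x\cdot y) = x$ yields $x + 0 = x$, hence (e). For (d), expand $1 + x = (x + \neg x) + x$ and rearrange via associativity and commutativity to $(x + x) + \neg x = x + \neg x = 1$, invoking (b). For (f), expand $0\cdot x = (x\cdot\neg x)\cdot x$ and rearrange similarly to $(x\cdot x)\cdot\neg x = x\cdot\neg x = 0$, invoking (a).

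I do not expect a real obstacle here. The one point requiring care is avoiding circularity: the proofs of (a) and (b) must rely only on the two absorption identities and the algebraic laws of axioms (1)--(2), with the elements $0$ and $1$ and the complement identities reserved for (c)--(f). Keeping that separation clean is the whole content of the argument.
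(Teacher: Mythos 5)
Your proposal is correct and matches the paper's proof essentially step for step: (a) by nesting the two absorption laws, (c) and (e) by substituting $\neg x$ into them, and (d) and (f) by expanding $1=x+\neg x$ and $0=x\cdot\neg x$ and invoking idempotency. The only (harmless) divergence is in (b), where the paper writes $2x=x^2+x^2=x\cdot(x+x)=x$ using part (a) and distributivity, while you use the dual of the argument for (a), relying only on the two absorption identities — slightly more economical, equally valid.
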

\begin{proof}
$(a)$ Let $y\in B$, then $x=x+(x\cdot y)$ and therefore $x^2=x\cdot (x+(x\cdot y))=x$.\\
$(b)$ By part $a)$ we have $2x=x^2+x^2=x\cdot (x+x)=x$.\\
$(c)$ $x\cdot 1=x\cdot (x+(\neg x))=x$.\\
$(d)$ $1+ x=(x+(-x))+x=2x+(\neg x)=x+(\neg x)=1$.\\
$(e)$ $0+x=x+0=x+(x\cdot (\neg x))=x$.\\
$(f)$ $0\cdot x=x\cdot 0=x\cdot (x\cdot(\neg x))=x^2\cdot(\neg x)=x\cdot(\neg x)=0$.
\end{proof}
\begin{Proposition}\label{ComplementProperties}
Let $(B,+,\cdot,\neg)$ be a Boolean algebra and $x,y\in B$. Then:\\
$(a)$ If $x\cdot y=0$ and $x+y=1$, then $y=\neg x$.\\
$(b)$ $\neg(\neg x)=x$.\\
$(c)$ $\neg 1=0$ and $\neg 0=1$.
\end{Proposition}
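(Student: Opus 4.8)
The plan is to prove part (a) first --- this is the uniqueness of complements --- and then obtain (b) and (c) as short formal consequences, using only the identities recorded in the preceding Proposition (idempotency, $1\cdot x=x$, $0+x=x$, $0\cdot x=0$, $1+x=1$) together with distributivity and commutativity. No new ideas beyond the axioms are needed; the work is in choosing the right substitution into the distributive law.

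For (a), I would assume $x\cdot y=0$ and $x+y=1$ and multiply $y$ by the copy of $1$ coming from the axiom $x+(\neg x)=1$, then expand:
\[
y = y\cdot 1 = y\cdot\bigl(x+(\neg x)\bigr) = (y\cdot x) + (y\cdot(\neg x)) = 0 + (y\cdot(\neg x)) = y\cdot(\neg x),
\]
using commutativity ($y\cdot x=x\cdot y=0$), distributivity of $\cdot$ over $+$, and $0+z=z$. Symmetrically, multiplying $\neg x$ by the copy of $1$ supplied by the hypothesis $x+y=1$ gives
\[
\neg x = (\neg x)\cdot 1 = (\neg x)\cdot(x+y) = ((\neg x)\cdot x) + ((\neg x)\cdot y) = 0 + ((\neg x)\cdot y) = (\neg x)\cdot y.
\]
Since $y\cdot(\neg x)=(\neg x)\cdot y$ by commutativity, comparing the two displays yields $y=\neg x$.

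Part (b) then follows by noticing that the two defining equations for the complement, $x\cdot(\neg x)=0$ and $x+(\neg x)=1$, are symmetric under commutativity: rewriting them as $(\neg x)\cdot x=0$ and $(\neg x)+x=1$ says exactly that $x$ plays the role of the element ``$y$'' relative to $\neg x$ in the statement of (a), so $x=\neg(\neg x)$. For (c), I would observe that $1\cdot 0=0$ (part (c) of the previous Proposition with $x=0$, or part (f)) and $1+0=1$ (part (d) there), so applying (a) with $x\mapsto 1$, $y\mapsto 0$ gives $0=\neg 1$; then $\neg 0 = \neg(\neg 1) = 1$ by (b), or equally one applies (a) again with $x\mapsto 0$, $y\mapsto 1$.

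I do not expect a genuine obstacle here. The only point requiring attention is to invoke commutativity at the right moments so that ``$z$ is a complement of $w$'' is recognized as a relation symmetric in $z$ and $w$; once part (a) is in hand as the uniqueness-of-complements statement, parts (b) and (c) are immediate.
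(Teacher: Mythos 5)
Your proof is correct and takes essentially the same route as the paper: part (a) is established by two applications of distributivity showing that $y$ and $\neg x$ both equal $y\cdot(\neg x)$, and (b), (c) are read off as instances of the uniqueness statement (a). The paper organizes the computation slightly differently (computing $\neg x$ twice rather than $y$ and $\neg x$ once each), but the idea is identical, and your explicit derivations of (b) and (c) fill in what the paper leaves as "follow from (a)."
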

\begin{proof}
Notice that part $(b)$ and $(c)$ follow from $(a)$. Let $x,y\in B$, such that $x\cdot y=0$ and $x+y=1$. On one hand, one has $\neg x=\neg x\cdot 1=\neg x\cdot (x+y)=(\neg x)\cdot x+(\neg x)\cdot y=0+(\neg x)\cdot y=\neg x\cdot y$. On the other hand, $\neg x=\neg x+0=(\neg x\cdot y)+(x\cdot y)=(\neg x+x)\cdot y=1\cdot y=y$. 
\end{proof}
\begin{Proposition}
Let $(B,+,\cdot,\neg)$ be a Boolean algebra, and define a relation $\leqslant$ on $B$ by: $x\leqslant y$ if and only if $x+y=y$ for all $x,y\in B$. Then the relation $\leqslant$ is a partial order relation on $B$.
\end{Proposition}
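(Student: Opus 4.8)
The plan is to verify the three defining properties of a partial order in turn: reflexivity, antisymmetry, and transitivity, each of which reduces to a one-line manipulation using commutativity, associativity, and the idempotence of $+$ established in the preceding propositions.

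First, for reflexivity I would invoke part $(b)$ of the idempotence proposition, which gives $x+x=x$ for every $x\in B$; this is precisely the statement $x\leqslant x$. Next, for antisymmetry, I would assume $x\leqslant y$ and $y\leqslant x$, so that $x+y=y$ and $y+x=x$ by definition; since $+$ is commutative, $x+y=y+x$, and hence $y=x+y=y+x=x$.

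For transitivity, I would assume $x\leqslant y$ and $y\leqslant z$, i.e.\ $x+y=y$ and $y+z=z$. Then, using associativity of $+$ together with the first hypothesis, $x+z=x+(y+z)=(x+y)+z=y+z=z$, so $x\leqslant z$. Collecting these three observations completes the proof.

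There is no real obstacle here: the only thing to be careful about is that all the ingredients used (idempotence of $+$, commutativity, associativity) have already been recorded in the excerpt, so the argument is entirely self-contained and consists of routine substitutions. If anything, the subtle point worth a sentence of justification is that the idempotence of $+$ is itself a consequence of the absorption law rather than an axiom, but since that proposition is already proved above I can cite it directly.
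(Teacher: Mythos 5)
Your proposal is correct and follows essentially the same route as the paper: reflexivity from idempotence of $+$, antisymmetry from commutativity, and transitivity from associativity together with $x+y=y$. The only cosmetic difference is that the paper re-derives $x+x=x$ inline via the absorption law rather than citing the earlier idempotence proposition, which you rightly note is already available.
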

\begin{proof}
$\bullet$ Reflexivity: for all $x\in B$, $x+x=x^2+x^2=x\cdot(x+x)=x$ and hence $x\leqslant x$.\\
$\bullet$ Antisymmetry:  for all $x,y\in B$, if $x\leqslant y$ and $y\leqslant x$ then $x+y=y$ and $x+y=x$; thus, $x=y$.\\
$\bullet$ Transitivity:  for all $x,y,z\in B$, if $x\leqslant y$ and $y\leqslant z$, then $x+y=y$ and $y+z=z$. Thus, $x+z=x+(y+z)=(x+y)+z=y+z=z$ which implies that $x\leqslant z$.
\end{proof}
\begin{Proposition}\label{PartialOrderProperties}
Let $(B,+,\cdot,\neg)$ be a Boolean algebra and $x,y,x',y'\in B$. Then:\\
$(a)$ $0\leqslant x\leqslant 1$.\\
$(b)$ $x\cdot y\leqslant x\leqslant x+y$ and $x\cdot y\leqslant y\leqslant x+y$.\\
$(c)$ If $x\leqslant y$ and $x'\leqslant y'$, then $x\cdot x'\leqslant y\cdot y'$.\\
$(d)$ If $x\leqslant y$ and $x'\leqslant y'$, then $x+x'\leqslant y+y'$.\\
$(e)$ $\sup\{x,y\}=x+y$.\\
$(f)$ $\inf\{x,y\}=x\cdot y$.
\end{Proposition}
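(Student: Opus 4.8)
The plan is to handle the six parts in a dependency order: (a) and (b) fall out immediately from the absorption law and the computations recorded in the previous proposition, and they serve as the ``bound'' halves of (e) and (f); parts (c) and (d) need one short intermediate observation; and (e), (f) use the two distributivity laws.

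First I would dispatch (a): since $0+x=x$ (previous proposition) we get $0\leqslant x$, and since $x+1=1+x=1$ we get $x\leqslant 1$. For (b), the absorption law $x+(x\cdot y)=x$ together with commutativity of $+$ gives $(x\cdot y)+x=x$, i.e.\ $x\cdot y\leqslant x$; interchanging the roles of $x$ and $y$ and using commutativity of $\cdot$ gives $x\cdot y\leqslant y$. For the other inequality, $x+(x+y)=(x+x)+y=x+y$ by associativity and idempotence of $+$, so $x\leqslant x+y$, and symmetrically $y\leqslant x+y$.

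The key step for (c) and (d) is monotonicity in a single argument. If $x\leqslant y$, i.e.\ $x+y=y$, then for any $z$ one has $(x\cdot z)+(y\cdot z)=(x+y)\cdot z=y\cdot z$ by distributivity of $\cdot$ over $+$, so $x\cdot z\leqslant y\cdot z$; and $(x+z)+(y+z)=(x+y)+(z+z)=y+z$ by commutativity, associativity and idempotence of $+$, so $x+z\leqslant y+z$. Granting this, (c) follows from $x\cdot x'\leqslant y\cdot x'=x'\cdot y\leqslant y'\cdot y=y\cdot y'$ together with transitivity of $\leqslant$, and (d) follows the same way with $+$ in place of $\cdot$.

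Finally, for (e): part (b) already shows $x+y$ is an upper bound of $\{x,y\}$; if $z$ is any upper bound, then $x+z=z$ and $y+z=z$, so $(x+y)+z=x+(y+z)=x+z=z$, giving $x+y\leqslant z$, hence $x+y=\sup\{x,y\}$. Dually, for (f), part (b) shows $x\cdot y$ is a lower bound; if $z\leqslant x$ and $z\leqslant y$, then $z+x=x$ and $z+y=y$, and using distributivity of $+$ over $\cdot$ we get $z+(x\cdot y)=(z+x)\cdot(z+y)=x\cdot y$, so $z\leqslant x\cdot y$, hence $x\cdot y=\inf\{x,y\}$. The only part that is not a one-line axiom manipulation is the single-variable monotonicity lemma, so that is the step I would isolate and prove first; everything else is bookkeeping with the axioms and the previous proposition.
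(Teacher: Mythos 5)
Your proof is correct and follows essentially the same route as the paper: every part is a direct computation from the definition $x\leqslant y\iff x+y=y$ together with absorption, idempotence of $+$ and $\cdot$, and the two distributivity laws. The only differences are organizational and minor: for $(c)$ you isolate one-variable monotonicity and compose it twice via transitivity, whereas the paper runs a single sandwich computation $y\cdot y'\leqslant x\cdot x'+y\cdot y'\leqslant(x+y)\cdot(x'+y')=y\cdot y'$ and invokes antisymmetry; and for $(f)$ you use distributivity of $+$ over $\cdot$ directly, where the paper instead reuses part $(c)$ together with $z=z^2$.
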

\begin{proof}
$(a)$ Let $x\in B$, since $x+0=x$ and $x+1=1$, so $0\leqslant x\leqslant 1$. \\
$(b)$ Let $x,y\in B$, since $x+(x\cdot y)=x$ and $x+(x+y)=2x+y=x+y$, hence $x\cdot y\leqslant x\leqslant x+y$. Similarly,  $x\cdot y\leqslant y\leqslant x+y$.\\
$(c)$ Let $x,y,x',y'\in B$ and assume that $x\leqslant y$ and $x'\leqslant y'$. Now, $y\cdot y'\leqslant x\cdot x'+ y\cdot y'\leqslant x\cdot x'+x\cdot y'+ y\cdot x'+ y\cdot y'=x\cdot(x'+y')+y\cdot(x'+y')=(x+y)\cdot (x'+y')=y\cdot y'$ and hence $x\cdot x'+ y\cdot y'=y\cdot y'$ as desired.\\
($d$) Let $x,y,x',y'\in B$ and assume that $x\leqslant y$ and $x'\leqslant y'$, then $(x+x')+(y+y')=(x+y)+(x'+y')=y+y'$.\\
($e$) Let $x,y,z\in B$ such that $x\leqslant z$ and $y\leqslant z$; then by part $(b)$, $x+y\leqslant 2z=z$.\\
($f$) Let $x,y,z\in B$ such that $z\leqslant x$ and $z\leqslant y$; then by part $(c)$, $z=z^2\leqslant x\cdot y$.\\
\end{proof}
\ref{PartialOrderProperties} tells us that every Boolean algebra is a distributive lattice.
\begin{Definition}
Let $(B,+,\cdot,\neg)$ and $(B',+',\cdot',\neg')$ be Boolean algebras. A map\\ $\varphi:B\to B'$ is called a \textbf{homomorphism} if for all $x,y\in B$:\\
$\bullet$ $\varphi(x+y)=\varphi(x)+'\varphi(y)$,\\
$\bullet$ $\varphi(x\cdot y)=\varphi(x)\cdot'\varphi(y)$,\\
$\bullet$ $\varphi(0_B)=0_{B'}$ and $\varphi(1_B)=1_{B'}$.
\end{Definition}
Notice that for $x\in B$, since $0_{B'}=\varphi(0_B)=\varphi(x\cdot (\neg x))=\varphi(x)\cdot'\varphi(\neg x)$ and $1_{B'}=\varphi(1_B)=\varphi(x+(\neg x))=\varphi(x)+'\varphi(\neg x)$, hence by \ref{ComplementProperties}, $\varphi(\neg x)=\neg'\varphi(x)$. 
\begin{Example}
The field $\mathbb{Z}_2$ is a Boolean algebra where the complement $\neg$ of its elements is given by: $\neg 0:=1$ and $\neg 1:=0$. Moreover, $\mathbb{Z}_2$ is unique up to isomorphism; that is, any Boolean algebra with cardinality $2$ is isomorphic to $\mathbb{Z}_2$. $\mathbb{Z}_2$ is called the \textbf{two-element Boolean algebra}.
\end{Example}
Now we associate a Boolean algebra to any given topological space with the property that any continuous map between two topological spaces induces a homomorphism between their associated Boolean algebras; and if the two topological spaces are homeomorphic, their associated Boolean algebras are isomorphic.
\begin{Proposition}\label{ClopBoolean}
Let $X$ be a topological space. Then:\\
($a$) If $f:X\to Y$ is a continuous map between topological spaces, it induces a homomorphism $\widetilde{f}:Clop(Y)\to Clop(X)$.\\
($b$) If $f:X\to Y$ is a homeomorphism between topological spaces, it induces an isomorphism $\widetilde{f}:Clop(Y)\to Clop(X)$.\\
\end{Proposition}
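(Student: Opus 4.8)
The plan is to define $\widetilde{f}$ by pulling clopen sets back along $f$, that is, $\widetilde{f}(U):=f^{-1}(U)$ for every $U\in Clop(Y)$, and then to verify the three bullet points in the definition of a Boolean-algebra homomorphism using the elementary identities satisfied by preimages. Recall that, by Example \ref{PowersetBooleanAlg} applied to $Clop(X)$ and $Clop(Y)$, the operations involved are $\cup$, $\cap$ and set-theoretic complement, with $0=\emptyset$ and $1$ the whole space.

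First I would check that $\widetilde{f}$ actually takes values in $Clop(X)$. If $U\subset Y$ is clopen, then $U$ is open, so $f^{-1}(U)$ is open by continuity of $f$; and $Y\setminus U$ is also open, so $f^{-1}(Y\setminus U)=X\setminus f^{-1}(U)$ is open, i.e.\ $f^{-1}(U)$ is closed. Hence $f^{-1}(U)\in Clop(X)$, and $\widetilde{f}\colon Clop(Y)\to Clop(X)$ is well defined. For the homomorphism axioms I would invoke the standard identities $f^{-1}(U\cup V)=f^{-1}(U)\cup f^{-1}(V)$ and $f^{-1}(U\cap V)=f^{-1}(U)\cap f^{-1}(V)$ for $U,V\in Clop(Y)$, together with $f^{-1}(\emptyset)=\emptyset$ and $f^{-1}(Y)=X$; these are exactly the three defining conditions, so $\widetilde{f}$ is a homomorphism. (Compatibility with complements, $f^{-1}(Y\setminus U)=X\setminus f^{-1}(U)$, is then automatic by the observation following the definition of homomorphism, which uses \ref{ComplementProperties}, though it is also immediate directly.) This proves $(a)$.

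For $(b)$, suppose $f$ is a homeomorphism and set $g:=f^{-1}\colon Y\to X$. Since $g$ is continuous, part $(a)$ gives a homomorphism $\widetilde{g}\colon Clop(X)\to Clop(Y)$ with $\widetilde{g}(V)=g^{-1}(V)=f(V)$. Because $f$ is a bijection we have $f^{-1}(f(V))=V$ for all $V\subset X$ and $f(f^{-1}(U))=U$ for all $U\subset Y$, whence $\widetilde{f}\circ\widetilde{g}=\mathrm{id}_{Clop(X)}$ and $\widetilde{g}\circ\widetilde{f}=\mathrm{id}_{Clop(Y)}$. A homomorphism possessing a two-sided inverse that is itself a homomorphism is an isomorphism, so $\widetilde{f}$ is an isomorphism with inverse $\widetilde{g}$.

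I do not expect a genuine obstacle here; the proof is routine. The two points deserving a word of care are the verification that the preimage of a clopen set is again clopen (both halves, openness and closedness, must be used, the latter via the complement), and, in $(b)$, the identification of the inverse of $\widetilde{f}$ with the pullback along $f^{-1}$ rather than some ad hoc map — both handled by the bijectivity bookkeeping above.
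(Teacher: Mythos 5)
Your proposal is correct and follows exactly the paper's route: define $\widetilde{f}(U)=f^{-1}(U)$, check the homomorphism axioms via the standard preimage identities, and in part $(b)$ apply part $(a)$ to the continuous inverse $g=f^{-1}$ to obtain $\widetilde{g}$ with $\widetilde{f}\circ\widetilde{g}=\mathrm{id}_{Clop(X)}$ and $\widetilde{g}\circ\widetilde{f}=\mathrm{id}_{Clop(Y)}$. You simply spell out the details (well-definedness, i.e.\ that preimages of clopen sets are clopen) that the paper leaves as ``clearly''.
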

\begin{proof}
($a$) Let $f:X\to Y$ be a continuous map between topological spaces, and define $\widetilde{f}:Clop(Y)\to Clop(X)$ by $\widetilde{f}(U)=f^{-1}(U)$ for all $U\in Clop(Y)$. Clearly, the map $\widetilde{f}:Clop(Y)\to Clop(X)$ is a homomorphism.\\
($b$) Let $g:Y\to X$ be a continuous map such that $g\circ f=id_X$ and $f\circ g=id_Y$. By part ($b$),  $\widetilde{g}:Clop(X)\to Clop(Y)$ is a homomorphism. Moreover, $\widetilde{f}\circ \widetilde{g}=id_{Clop(X)}$ and $\widetilde{g}\circ \widetilde{f}=id_{Clop(Y)}$.
\end{proof}

The clopen Boolean algebra associated with a topological space $X$ is the two-elements Boolean algebra if and only if $X$ is connected. The more components $X$ has, the more interesting the Boolean algebra $Clop(X)$ is. Hausdorff compact spaces that are totally disconnected are of a particular interests as every Boolean algebra is the clopen Boolean algebra associated with a such topological space. 
\section{Stone's space of a Boolean algebra}
In this section, out of a given Boolean algebra $B$, we construct a compact Hausdorff totally disconnected topological space called the \textbf{Stone's space associated with} the Boolean algebra $B$ such that every homomorphism between two Boolean algebras induces a continuous map between their associated Stone's spaces. One way to construct a such space is by taking the set of all ultrafilters of $B$ as the underlying set of the Stone's space associated with a Boolean algebra $B$. Another way to construct the Stone's space associated with $B$ is by taking the family of all homomorphisms from $B$ to $\mathbb{Z}_2$ as the underlying set of the Stone's space associated with a Boolean algebra $B$. In section 5, it will be shown that the Stone's space associated with $B$ can also be obtained as the spectrum of the Boolean ring induced from $B$. We first give a characterization of a totally disconnected space $X$, when $X$ is locally compact.

\subsection{Characterization of locally compact totally disconnected spaces}
\begin{Definition}
Let $X$ a Hausdorff topological space:\\
$\bullet$ $X$ is called \textbf{totally disconnected} if the connected components of $X$ are exactly the singletons. Obviously, every discrete topological space is totally disconnected; however, the converse does not hold. $\mathbb{Q}$ as a subspace of $\mathbb{R}$ serves as an example of a totally disconnected space that is not discrete. \\
$\bullet$ $X$ is called a \textbf{Stone's space} if it is compact and totally disconnected.\\
$\bullet$ $X$ is called \textbf{zero-dimensional} if $X$ admits a basis consists of clopen subsets.
\end{Definition}
\begin{Definition}
Let $X$ be a non-empty set and $\mathcal{A}\subset \mathcal{P}(X)$. $\mathcal{A}$ is said to  \textbf{satisfy the finite intersection property} if the intersection of any finitely many elements of $\mathcal{A}$ is non-empty. It is well-known that a topological space is compact if and only if $\bigcap\limits_{A\in \mathcal{A}}A\neq \emptyset$ for every family of closed subsets $\mathcal{A}$ that satisfies the finite intersection property.
\end{Definition}
\begin{Proposition}\label{Zero-dimensionalIsAlwysTotallyDisconnected}
Let $X$ be a topological space. If $X$ is zero-dimensional then it is totally disconnected.
\end{Proposition}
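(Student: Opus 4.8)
The plan is to argue by contradiction, using the elementary fact that a clopen set splits any subset meeting both it and its complement. First I would suppose that $X$ is \emph{not} totally disconnected, so that some connected component $C$ of $X$ contains two distinct points $x$ and $y$. Since $X$ is Hausdorff (this is part of the standing hypothesis under which ``totally disconnected'' is defined; in any case a zero-dimensional $T_0$-space is automatically Hausdorff, because a clopen basic neighbourhood of $x$ missing $y$ and its complement separate $x$ and $y$), there is an open set $U$ with $x\in U$ and $y\notin U$.

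Next I would invoke zero-dimensionality of $X$: since $X$ has a basis of clopen sets, there is a clopen set $V$ with $x\in V\subseteq U$. Then $x\in V$, $y\in X\setminus V$, and both $V$ and $X\setminus V$ are open in $X$. Restricting to the subspace $C$, the sets $V\cap C$ and $(X\setminus V)\cap C$ are open in $C$, disjoint, and cover $C$; moreover they are nonempty because they contain $x$ and $y$ respectively. This exhibits a separation of $C$, contradicting the connectedness of $C$.

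Therefore no connected component of $X$ can contain more than one point, i.e. the connected components of $X$ are exactly the singletons, which is precisely the assertion that $X$ is totally disconnected. I do not expect any real obstacle in this argument; the only step that deserves a moment's care is ensuring that one may separate the two points $x$ and $y$ of $C$ by an open set, which is guaranteed by the Hausdorff (indeed merely $T_0$, together with zero-dimensionality) assumption. The rest is a direct unwinding of the definitions of ``zero-dimensional'', ``connected component'', and the subspace topology.
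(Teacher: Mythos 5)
Your proof is correct and takes essentially the same route as the paper's: both use the separation axiom (the paper invokes $T_1$, you invoke Hausdorff, either of which is part of the standing hypothesis) together with the clopen basis to produce a clopen set containing $x$ but not $y$, whose complement then yields a separation of any connected set containing both points. The only cosmetic difference is that you phrase it as a contradiction about a component $C$, while the paper directly shows every subset with two distinct points is disconnected.
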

\begin{proof}
Let $A\subset X$ and $x,y\in A$ be distinct elements. Since $X$ is a $T_1$ space, we can find a clopen subset $U$ containing $x$ but not $y$. Let $V:=X\setminus U$, then $U$ and $V$ are disjoint open subsets containing $x$ and $y$, respectively, and $X=U\cup V$. 
\end{proof}

When $X$ is locally compact, the concept of being totally disconnected coincide with the concept of being zero-dimensional. 

\begin{Lemma}\label{ComponentsOfCompactSpace}
Let $X$ be a Hausdorff compact space. For $x\in X$, let $C_x$ be the component of $X$ containing $x$, and $\mathcal{A}_x$ be the family of all clopen subsets of $X$ containing $x$. Then, $C_x=\bigcap\limits_{A\in \mathcal{A}_x}A$.
\end{Lemma}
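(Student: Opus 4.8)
The plan is to identify $C_x$ with the \emph{quasi-component} $Q_x := \bigcap_{A \in \mathcal{A}_x} A$ of $x$ by proving the two inclusions separately. The inclusion $C_x \subseteq Q_x$ is routine: for a fixed clopen $A \ni x$, the set $A \cap C_x$ is clopen in the subspace $C_x$ and nonempty (it contains $x$), so connectedness of $C_x$ forces $A \cap C_x = C_x$, i.e. $C_x \subseteq A$; since $A \in \mathcal{A}_x$ was arbitrary, $C_x \subseteq Q_x$. Note this direction does not use compactness.

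The reverse inclusion $Q_x \subseteq C_x$ is the heart of the matter, and I would obtain it by showing that $Q_x$ is connected: then, since $x \in Q_x$ and $C_x$ is the largest connected subset of $X$ containing $x$, it follows that $Q_x \subseteq C_x$. So suppose toward a contradiction that $Q_x = F_1 \cup F_2$ with $F_1, F_2$ disjoint, nonempty, and closed in $Q_x$, and say $x \in F_1$. Since $Q_x$ is an intersection of (cl)open sets it is closed in $X$, hence $F_1$ and $F_2$ are closed in $X$; as $X$ is compact Hausdorff it is normal, so there are disjoint open sets $U_1 \supseteq F_1$ and $U_2 \supseteq F_2$. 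Put $U := U_1 \cup U_2$, so that $Q_x \subseteq U$.

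Next I would use compactness to replace $U$ by a clopen set through the finite intersection property. The complement $X \setminus U$ is closed, hence compact, and $\{X \setminus A : A \in \mathcal{A}_x\}$ is an open cover of it, since $\bigcup_{A \in \mathcal{A}_x}(X \setminus A) = X \setminus Q_x \supseteq X \setminus U$. Passing to a finite subcover and intersecting the corresponding clopen sets yields $A_0 \in \mathcal{A}_x$ (a finite intersection of clopen sets containing $x$, hence clopen and containing $x$) with $A_0 \subseteq U$. Then $A_0 \cap U_1 = A_0 \setminus U_2 = A_0 \cap (X \setminus U_2)$ is open (an intersection of two open sets) and closed (an intersection of two closed sets), so it is clopen, and it contains $x$; therefore $A_0 \cap U_1 \in \mathcal{A}_x$, whence $Q_x \subseteq A_0 \cap U_1 \subseteq U_1$. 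But then $\emptyset \ne F_2 \subseteq Q_x \subseteq U_1$ while $F_2 \subseteq U_2$ and $U_1 \cap U_2 = \emptyset$, a contradiction. Hence $Q_x$ admits no such decomposition, it is connected, and combining with the first paragraph gives $C_x = Q_x$.

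The step I expect to be the main obstacle is precisely this promotion: turning a relative (subspace) separation of $Q_x$ into an honest clopen subset of $X$ separating $F_1$ from $F_2$. It crucially combines normality of the compact Hausdorff space $X$ with the compactness of $X \setminus U$ (equivalently, the finite intersection property), together with the small but essential observation that $A_0 \cap U_1$, being simultaneously a finite intersection of open sets and a finite intersection of closed sets, is clopen. Everything else is bookkeeping.
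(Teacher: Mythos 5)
Your proposal is correct and follows essentially the same route as the paper: both directions are argued identically, with the reverse inclusion obtained by separating a hypothetical disconnection of $Q_x$ via normality, shrinking to a finite intersection $A_0$ of clopen neighborhoods of $x$ inside $U_1\cup U_2$ by compactness, and observing that $A_0\cap U_1$ is clopen. Your bookkeeping for why $A_0\cap U_1$ is closed (namely $A_0\cap U_1=A_0\setminus U_2$) is a slightly cleaner phrasing of the same computation the paper carries out.
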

\begin{proof}
Notice that every component of $X$ that intersects a clopen subset of $X$ is contained in that clopen subset, so $C_x\subset\bigcap\limits_{A\in \mathcal{A}_x}A$. To prove that $\bigcap\limits_{A\in \mathcal{A}_x}A\subset C_x$, it suffices to show that $\bigcap\limits_{A\in \mathcal{A}_x}A$ is connected. Put $O_x:=\bigcap\limits_{A\in \mathcal{A}_x}A$ and assume that $U_1$ and $U_2$ are disjoint closed subsets of $O_x$ such that $O_x=U_1\cup U_2$. We aim to show that either $U_1=\emptyset$ or $U_2=\emptyset$. Since $X$ is normal, there exist disjoint open subsets $V_1$ and $V_2$ of $X$ such that $U_1\subset V_1$ and $U_2\subset V_2$. Put $V:=V_1\cup V_2$; we claim that there exist finitely many elements $A_1,...,A_n\in \mathcal{A}_x$ such that $\bigcap\limits_{i=1}^{n}A_i\subset V$. Seeking contradiction, assume that the intersection of any finitely many elements of $\mathcal{A}_x$ non-trivially intersects the closed subset $X\setminus V$. The family $\mathcal{A}_x\cup \{X\setminus V\}$ consists of closed subsets of the compact space $X$ and it satisfies the Finite Intersection Property, and hence $O_x\cap (X\setminus V)\neq\emptyset$ which contradicts the fact that $O_x\subset V$. Let, $A_1,...,A_n\in \mathcal{A}_x$ such that $\bigcap\limits_{i=1}^{n}A_i\subset V=V_1\cup V_2$. Without loss of generality, assume that $x\in V_1$, and notice that $A:=V_1\cap \bigcap\limits_{i=1}^{n}A_i$ is an open subset of $X$. Furthermore, as $A^c=(\bigcap\limits_{i=1}^{n}A_i)^c\cup (V_1^c\cap V_2)=(\bigcap\limits_{i=1}^{n}A_i)^c\cup V_2$ is open, $A=V_1\cap \bigcap\limits_{i=1}^{n}A_i$ is a clopen subset of $X$ containing $x$, and hence $A\in \mathcal{A}_x$. This shows that $O_x\subset A\subset V_1$ which implies that $V_2=\emptyset$ and, a fortiori, $U_2=\emptyset$. Thus, $O_x$ is connected as desired. 
\end{proof}
\begin{Proposition}\label{CompactTotallyDisconnectedIsZero-Dimensional}
Let $X$ be a compact totally disconnected space. Then $X$ is zero-dimensional.
\end{Proposition}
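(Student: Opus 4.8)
The plan is to leverage Lemma \ref{ComponentsOfCompactSpace} together with total disconnectedness to manufacture, around each point, arbitrarily small clopen neighborhoods; producing such a local clopen basis is exactly what zero-dimensionality asks for. So I would fix a point $x\in X$ and an open set $U$ with $x\in U$, and aim to find a clopen $V$ with $x\in V\subset U$. Since $X$ is totally disconnected, the component $C_x$ is the singleton $\{x\}$, and since $X$ is Hausdorff and compact, Lemma \ref{ComponentsOfCompactSpace} applies and yields $\bigcap_{A\in\mathcal{A}_x}A=\{x\}$, where $\mathcal{A}_x$ denotes the family of all clopen subsets of $X$ containing $x$.

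Next I would pass to the complement $K:=X\setminus U$, which is closed in the compact space $X$ and hence compact. For each $y\in K$ we have $y\neq x$, so the equality $\bigcap_{A\in\mathcal{A}_x}A=\{x\}$ forces the existence of some $A_y\in\mathcal{A}_x$ with $y\notin A_y$; then $X\setminus A_y$ is a clopen set containing $y$. The family $\{X\setminus A_y:y\in K\}$ is therefore an open cover of $K$, and by compactness of $K$ there exist $y_1,\dots,y_n\in K$ with $K\subset\bigcup_{i=1}^{n}(X\setminus A_{y_i})$.

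Finally I would set $V:=\bigcap_{i=1}^{n}A_{y_i}$. As a finite intersection of clopen sets, $V$ is clopen; it contains $x$ because each $A_{y_i}$ does; and $K\subset\bigcup_{i=1}^{n}(X\setminus A_{y_i})$ is equivalent to $V\cap K=\emptyset$, i.e. $V\subset U$. Hence every point of $X$ has a neighborhood basis of clopen sets, which means $X$ is zero-dimensional. There is essentially no obstacle here beyond bookkeeping: the substantive work is entirely contained in Lemma \ref{ComponentsOfCompactSpace}, and the only mild point of care is invoking compactness of the closed set $K=X\setminus U$ to extract the finite subcover.
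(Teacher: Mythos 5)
Your proof is correct and follows essentially the same route as the paper: apply Lemma \ref{ComponentsOfCompactSpace} to get $\bigcap_{A\in\mathcal{A}_x}A=\{x\}$ and then use compactness to extract finitely many $A_i$ with $\bigcap_{i=1}^{n}A_i\subset U$. The only difference is cosmetic: you phrase the compactness step as a finite subcover of $X\setminus U$ by the complements $X\setminus A_y$, while the paper leaves the (dual) finite-intersection-property argument implicit.
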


\begin{proof}
Let $C_x$ and $\mathcal{A}_x$ be as in \ref{ComponentsOfCompactSpace}. Notice that $C_x=\bigcap\limits_{A\in \mathcal{A}_x}A=\{x\}$. Let $U$ be an open neighborhood of $x\in X$. We can find finitely many elements $A_1,...,A_n\in \mathcal{A}_x$ such that $\bigcap\limits_{i=1}^{n}A_i\subset U$. Since $\bigcap\limits_{i=1}^{n}A_i$ is a clopen subset containing $x$, $X$ admits a basis consists of clopen subsets.
\end{proof}
\begin{Theorem}\label{CharOfTotallyDisconnectedSpaces}
Let $X$ be a Hausdorff locally compact space. $X$ is totally disconnected if and only if $X$ is zero-dimensional.
\end{Theorem}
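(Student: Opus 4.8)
The plan is to read off this equivalence from the two results already in hand. One direction needs nothing new: if $X$ is zero-dimensional it is totally disconnected by \ref{Zero-dimensionalIsAlwysTotallyDisconnected}, and that argument did not even use local compactness. So the content is entirely in the converse, which I would prove by localizing to a compact neighborhood and applying \ref{CompactTotallyDisconnectedIsZero-Dimensional}.

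So suppose $X$ is locally compact Hausdorff and totally disconnected, and I want to exhibit a basis of clopen sets. I would fix $x\in X$ and an open neighborhood $U$ of $x$, and use local compactness plus the Hausdorff axiom to find an open $V$ with $x\in V\subseteq U$ and $\overline{V}$ compact (for instance, take the interior of a compact neighborhood $K$ of $x$, intersect with $U$; since $K$ is closed in the Hausdorff space $X$, the closure $\overline{V}$ sits inside $K$ and is therefore compact). The subspace $\overline{V}$ is then compact Hausdorff, and it is totally disconnected because every subspace of a totally disconnected space is totally disconnected. Hence \ref{CompactTotallyDisconnectedIsZero-Dimensional} applies and $\overline{V}$ is zero-dimensional.

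Now $V$ is open in $X$ and contained in $\overline{V}$, so $V$ is open in the subspace $\overline{V}$, and zero-dimensionality of $\overline{V}$ lets me choose $W$ clopen in $\overline{V}$ with $x\in W\subseteq V$. The last thing to verify is that $W$ is clopen in $X$ itself. Being closed in the compact space $\overline{V}$, the set $W$ is compact, hence closed in the Hausdorff space $X$. Being open in $\overline{V}$, write $W=O\cap\overline{V}$ with $O$ open in $X$; since $W\subseteq V\subseteq\overline{V}$ we get $W=W\cap V=O\cap V$, which is open in $X$. Thus $W$ is a clopen subset of $X$ with $x\in W\subseteq U$, and letting $x$ and $U$ range proves $X$ admits a basis of clopen sets.

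I expect the only mildly delicate step to be this reduction to the compact case: one must use local compactness and Hausdorffness both to secure the compact closure $\overline{V}$ inside $U$ and to promote "clopen in $\overline{V}$" to "clopen in $X$". Everything genuinely substantive — the description of components of a compact Hausdorff space via clopen sets and the passage from totally disconnected to zero-dimensional — has already been done in \ref{ComponentsOfCompactSpace} and \ref{CompactTotallyDisconnectedIsZero-Dimensional}, so no new hard analysis is needed here.
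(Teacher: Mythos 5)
Your proof is correct and follows essentially the same route as the paper: reduce to a relatively compact closed neighborhood, apply the compact case (\ref{CompactTotallyDisconnectedIsZero-Dimensional}), and promote a clopen subset of $\overline{V}$ to a clopen subset of $X$. You are in fact slightly more careful than the paper in verifying that the set $W$ chosen clopen in $\overline{V}$ and contained in the open set $V$ is genuinely clopen in $X$, a point the paper passes over quickly.
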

\begin{proof}
Assume that $X$ is totally disconnected. For $x\in X$, let $U$ be an open neighborhood of $x\in X$. Since $X$ is a Hausdorff locally compact space, we can find a relatively compact open neighborhood $V$ of $x$ such that $V\subset cl(V)\subset U$. Now, $cl(V)$ itself is compact totally disconnected as a subspace of $X$, and by \ref{CompactTotallyDisconnectedIsZero-Dimensional}, $cl(V)$ is zero-dimensional. Let $A$ be a clopen subset of $cl(V)$ containing $x$, then as $A\subset V$, $A$ is clopen subset of $X$ containing $x$ and contained in $U$. Hence, $X$ is zero-dimensional. The other direction follows from \ref{Zero-dimensionalIsAlwysTotallyDisconnected}.
\end{proof}
\begin{Corollary}
Let $X$ be a Hausdorff compact space. $X$ is a Stone's space if and only if $X$ is zero-dimensional.
\end{Corollary}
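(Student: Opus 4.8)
The plan is to observe that this is an immediate specialization of Theorem \ref{CharOfTotallyDisconnectedSpaces}. Recall that, by definition, a Hausdorff space is a Stone's space precisely when it is compact and totally disconnected. So, under the standing hypothesis that $X$ is a Hausdorff compact space, the assertion ``$X$ is a Stone's space'' is literally the assertion ``$X$ is totally disconnected.''

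First I would note that every compact Hausdorff space is locally compact Hausdorff, so $X$ falls within the scope of Theorem \ref{CharOfTotallyDisconnectedSpaces}. Applying that theorem, $X$ is totally disconnected if and only if $X$ is zero-dimensional. Chaining the two equivalences gives: $X$ is a Stone's space $\iff$ $X$ is totally disconnected $\iff$ $X$ is zero-dimensional, which is exactly the claimed biconditional.

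Since all of the substantive work has already been carried out, there is no real obstacle here; the only thing to be careful about is the logical bookkeeping, namely that the compactness hypothesis is assumed throughout so that ``Stone's space'' collapses to ``totally disconnected,'' and that compactness legitimately supplies the local compactness needed to invoke Theorem \ref{CharOfTotallyDisconnectedSpaces}. (One could alternatively bypass the locally compact version entirely and cite Proposition \ref{CompactTotallyDisconnectedIsZero-Dimensional} together with Proposition \ref{Zero-dimensionalIsAlwysTotallyDisconnected} directly, which between them give both implications in the compact case.)
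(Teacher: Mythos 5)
Your proposal is correct and follows exactly the paper's route: the paper's proof is simply ``Apply \ref{CharOfTotallyDisconnectedSpaces},'' and you supply the same reduction (compact Hausdorff implies locally compact, and ``Stone's space'' collapses to ``totally disconnected'' under the compactness hypothesis) with the bookkeeping made explicit.
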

\begin{proof}
Apply \ref{CharOfTotallyDisconnectedSpaces}.
\end{proof}

\subsection{Stone's space of a Boolean algebra as the family of its ultrafilters} Now we construct the Stone's space that is associated with a Boolean algebra using its ultrafilters as the building blocks. 
\begin{Definition}
Let $(B,+,\cdot,\neg)$ be a Boolean algebra, and $F$ be a nonempty subset of $B$; $F$ is called a \textbf{filter} of $B$ if:\\
$\bullet$ For all $x,y\in F$, $x\cdot y\in F$.\\
$\bullet$ For all $x\in F$ and $y\in B$, if $x\leqslant y$, then $y\in F$.\\
$\bullet$ $F$ is called a \textbf{proper filter} if it is a filter contained properly in $B$. Clearly, a filter $F$ is proper if and only if $0\notin F$.
\end{Definition}
\begin{Definition}
Let $(B,+,\cdot,\neg)$ be a Boolean algebra, and $A\subset B$, $A$ is said to \textbf{satisfy the finite product property} if for any finitely many elements $x_1,...,x_n\in A$, then $x_1\cdot ...\cdot x_n\neq 0$.
\end{Definition}
In the Boolean algebra $(\mathcal{P}(X),\cup,\cap,\neg)$, a subset of $\mathcal{P}(X)$ satisfies the finite product property if and only if it satisfies the finite intersection property. Furthermore, $F\in \mathcal{P}(X)$ is a filter of the Boolean algebra $\mathcal{P}(X)$ if and only if it is a filter in the usual sense.

It can be evidently seen that every subset of a proper filter must satisfy the finite product property. Moreover, every subset of of a Boolean algebra that satisfies the finite product property is contained in a proper filter. 
\begin{Proposition}\label{SetsSFPPContainedInProperFilter}
Let $(B,+,\cdot,\neg)$ be a Boolean algebra and $A$ be a nonempty subset of $B$ that satisfies the finite product property. Then, $A$ is contained in a proper filter $F_A$. Furthermore, if $F$ is a filter containing $A$, then $F_A\subset F$.
\end{Proposition}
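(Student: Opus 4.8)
The plan is to build $F_A$ explicitly as the upward closure of the set of all finite products of elements of $A$, and then verify directly that this set is a proper filter, that it contains $A$, and that it is contained in any filter containing $A$. Concretely, I would set
\[
F_A := \{\, y \in B : x_1\cdot x_2 \cdots x_n \leqslant y \text{ for some } n\geqslant 1 \text{ and } x_1,\dots,x_n \in A \,\}.
\]
One should think of this as first forming $A^{\cdot} := \{ x_1\cdots x_n : n\geqslant 1,\ x_i\in A\}$, the closure of $A$ under finite products, and then taking $F_A = \{ y : a \leqslant y \text{ for some } a\in A^{\cdot}\}$, the up-set generated by $A^{\cdot}$.

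The key steps, in order, are as follows. First, $A \subseteq F_A$, since each $x\in A$ is a one-term product and $x\leqslant x$. Second, $F_A$ is closed under $\cdot$: if $a\leqslant y$ and $b\leqslant z$ with $a,b$ finite products of elements of $A$, then by \ref{PartialOrderProperties}(c) we have $a\cdot b \leqslant y\cdot z$, and $a\cdot b$ is again a finite product of elements of $A$, so $y\cdot z\in F_A$. Third, $F_A$ is upward closed: if $y\in F_A$ via $a\leqslant y$, and $y\leqslant z$, then $a\leqslant z$ by transitivity of $\leqslant$, so $z\in F_A$. Fourth, $F_A$ is proper, i.e. $0\notin F_A$: if $0\in F_A$ then some finite product $x_1\cdots x_n$ of elements of $A$ satisfies $x_1\cdots x_n\leqslant 0$, hence $x_1\cdots x_n = 0$ by \ref{PartialOrderProperties}(a), contradicting the finite product property of $A$. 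Finally, minimality: if $F$ is any filter with $A\subseteq F$, then $F$ is closed under finite products so $A^{\cdot}\subseteq F$, and $F$ is upward closed, so every $y$ with some $a\in A^{\cdot}$, $a\leqslant y$ lies in $F$; thus $F_A\subseteq F$.

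I do not expect a serious obstacle here; the only point requiring a little care is the bookkeeping around "finite products'' — one must allow products of length one (to get $A\subseteq F_A$) and use associativity/commutativity of $\cdot$ freely to see that the concatenation of two such products is again such a product, which is exactly what makes the closure-under-$\cdot$ step go through. The properness step is where the hypothesis is actually used, via idempotence-free reasoning: $a\leqslant 0$ forces $a=0$ because $0\leqslant a$ always holds and $\leqslant$ is antisymmetric.
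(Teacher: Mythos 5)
Your construction is exactly the paper's: the same up-set of finite products, with closure under $\cdot$ via \ref{PartialOrderProperties}, upward closure by transitivity, and minimality by the same direct argument. If anything you are slightly more thorough, since you explicitly verify properness ($0\notin F_A$ via antisymmetry of $\leqslant$), a point the paper's proof leaves implicit.
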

\begin{proof}
Set $F_A:=\{x\in B:x_1\cdot ...\cdot x_n\leqslant x$ for some $n\in\mathbb{N}$ and $x_1,...,x_n\in A\}$. Then:\\
$\bullet$ $F_A\neq\emptyset$ as $1\in F_A$.\\
$\bullet$ If $x,y\in F_A$, then there exist $n,m\in\mathbb{N}$ and $x_1,...,x_n,y_1,...,y_n\in A$ such that $x_1\cdot ...\cdot x_n\leqslant x$ and $y_1\cdot ...\cdot y_m\leqslant y$. By \ref{PartialOrderProperties}, $x_1\cdot ...\cdot x_n\cdot y_1\cdot ...\cdot y_m\leqslant x\cdot y$ and hence $x\cdot y\in F_A$.\\
$\bullet$ By the definition of $F_A$, if $x\in F_A$ and $y\in B$ such that $x\leqslant y$, then $y\in F_A$.\\ Finally, it is clear that if $F_A$ is contained in every filter containing $A$.
\end{proof}
\begin{Definition}
Let $(B,+,\cdot,\neg)$ be a Boolean algebra and $A$ be a nonempty subset of $B$ that satisfies the finite product property. The proper filter $F_A$ is called the \textbf{filter generated by} $A$. When $A=\{x\}$ is a singleton, $F_x$ is called the \textbf{principal filter generated by} $\{x\}$. 
\end{Definition}

\begin{Definition}
Let $(B,+,\cdot,\neg)$ be a Boolean algebra, and $F$ be a proper filter of $B$;\\
$\bullet$ $F$ is called a \textbf{prime filter} if for all $x,y\in B$ with $x+y\in F$, then either $x\in F$ or $y\in F$.\\
$\bullet$ $F$ is called a \textbf{maximal filter} if it is maximal with respect to the inclusion.\\
$\bullet$ $F$ is called an \textbf{ultrafilter} if for all $x\in B$, either $x\in F$ or $\neg x\in F$ and not both.\\
$\bullet$ The family of all ultrafilters of $B$ is denoted by $\mathcal{U}(B)$.
\end{Definition}
\begin{Lemma}\label{ProperFilterIsContainedInMaximal}
Let $(B,+,\cdot,\neg)$ be a Boolean algebra, and $F'$ be a proper filter of $B$. For every $x\in B\setminus F$, there exists a maximal filter $M$ of $B$ containing $F'$ such that $x\notin M$. 
\end{Lemma}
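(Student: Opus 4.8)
(Here $F$ and $F'$ stand for the same filter; I read the hypothesis as: $F'$ is a proper filter of $B$ and $x\in B\setminus F'$.) The plan is to first enlarge $F'$ to a proper filter that already contains $\neg x$ — and hence omits $x$ — and then to run Zorn's Lemma on the poset of proper filters above that enlargement, arranged so that a maximal element of the poset is automatically a maximal filter. To carry this out I would begin by checking that $F'\cup\{\neg x\}$ satisfies the finite product property. If it did not, then, since a product of finitely many elements of $F'$ alone already lies in $F'$ and so cannot be $0$, there would be some $f\in F'$ with $f\cdot\neg x=0$; but then $f=f\cdot 1=f\cdot x+f\cdot\neg x=f\cdot x$, so $f\leqslant x$ by \ref{PartialOrderProperties}(f), forcing $x\in F'$, a contradiction. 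Hence by \ref{SetsSFPPContainedInProperFilter} the filter $F'':=F_{F'\cup\{\neg x\}}$ is a proper filter with $F'\subseteq F''$ and $\neg x\in F''$, and in particular $x\notin F''$, for otherwise $0=x\cdot\neg x\in F''$ would contradict properness.

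Next I would apply Zorn's Lemma to the collection $\mathcal{D}$ of all proper filters of $B$ containing $F''$, ordered by inclusion. It is nonempty ($F''\in\mathcal{D}$), and the union of any chain in $\mathcal{D}$ is again a filter, is proper because $0$ lies in no member, and contains $F''$, hence is an upper bound in $\mathcal{D}$. Let $M$ be a maximal element of $\mathcal{D}$. The point is that $M$ is then a \emph{maximal filter of} $B$: any proper filter $N$ with $M\subsetneq N$ would satisfy $N\supseteq M\supseteq F''$, hence $N\in\mathcal{D}$, contradicting the maximality of $M$ in $\mathcal{D}$. Finally $F'\subseteq F''\subseteq M$, and $x\notin M$ because $\neg x\in F''\subseteq M$ while $M$ is proper, which is exactly what the lemma asserts.

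The verifications that a union of a chain of proper filters is a proper filter and that $F_{F'\cup\{\neg x\}}$ behaves as claimed are routine from the definitions and \ref{SetsSFPPContainedInProperFilter}, so I do not expect any of those to be the obstacle. I expect the only genuine content to be the opening observation that $F'\cup\{\neg x\}$ enjoys the finite product property — equivalently, that $x\notin F'$ forces no element of $F'$ to sit below $x$ — together with the structural choice that makes Zorn's Lemma do all the remaining work: because $\mathcal{D}$ is closed under passing to larger proper filters, a maximal element of $\mathcal{D}$ is maximal among \emph{all} proper filters, so no separate "every proper filter is contained in an ultrafilter" argument is needed here.
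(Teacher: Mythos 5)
Your proof is correct, and it takes a genuinely different route from the paper's. The paper applies Zorn's Lemma directly to the family $\mathcal{F}$ of proper filters that contain $F'$ and omit $x$, and stops once it has a maximal element $M$ of $\mathcal{F}$; as written, that leaves unaddressed why a filter maximal \emph{in $\mathcal{F}$} is a maximal filter \emph{of $B$} (a priori $M$ could have a proper extension that happens to contain $x$, which would lie outside $\mathcal{F}$ without contradicting maximality there). Closing that gap requires essentially the observation you place up front: since $x\notin F'$, no $f\in F'$ satisfies $f\cdot\neg x=0$ (else $f=f\cdot x+f\cdot\neg x=f\cdot x\leqslant x$ would force $x\in F'$), so $\neg x$ can be adjoined. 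You use this to build $F''\supseteq F'\cup\{\neg x\}$ first and then run Zorn on the upward-closed family of all proper filters containing $F''$, where maximality in the poset automatically means maximality as a filter, and $x\notin M$ falls out from $\neg x\in M$ together with properness. So your argument is both complete and structurally cleaner at the one point where the paper's version is terse; the paper's version is shorter but would need your $\neg x$ observation (or an equivalent one) appended to its maximal element $M$ to finish.
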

\begin{proof}
The family $\mathcal{F}:=\{F\subset B:$ F is a proper filter containing $F'$ and $x\notin F$ $\}$ is partially ordered by the inclusion. Let $\mathcal{C}\subset \mathcal{F}$ be a chain; then $E=\bigcup\limits_{F\in \mathcal{C}}F$ is an upper bound of $\mathcal{C}$ in $\mathcal{F}$. By Zorn's Lemma, $\mathcal{F}$ contains a maximal element $M$. 
\end{proof}
\begin{Lemma}\label{UltrIsPrime}
Let $(B,+,\cdot,\neg)$ be a Boolean algebra, and $F$ be a proper filter of $B$. $F$ is an ultrafilter if and only if it is a prime filter.
\end{Lemma}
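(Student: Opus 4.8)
The plan is to prove the two implications separately, using the defining properties of a Boolean algebra established in the earlier propositions, especially \ref{ComplementProperties} and \ref{PartialOrderProperties}, together with the fact that a proper filter is exactly one not containing $0$.

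\textbf{Ultrafilter implies prime.} Suppose $F$ is an ultrafilter and let $x+y\in F$. I would argue by contradiction: if $x\notin F$ and $y\notin F$, then since $F$ is an ultrafilter we get $\neg x\in F$ and $\neg y\in F$, hence $\neg x\cdot \neg y\in F$ because filters are closed under $\cdot$. The key algebraic identity here is $\neg x\cdot \neg y=\neg(x+y)$ (a De Morgan law, which I would verify quickly from \ref{ComplementProperties}(a) by checking $(x+y)\cdot(\neg x\cdot \neg y)=0$ and $(x+y)+(\neg x\cdot \neg y)=1$ using distributivity and the idempotence/absorption facts from the first proposition). Then $F$ contains both $x+y$ and $\neg(x+y)$, so it contains their product $0$, contradicting that $F$ is proper. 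Hence $x\in F$ or $y\in F$, i.e. $F$ is prime.

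\textbf{Prime implies ultrafilter.} Suppose $F$ is a prime proper filter and let $x\in B$. Since $x+\neg x=1$ and $1\in F$ (as $x\leqslant 1$ for the generator, or simply because a filter is upward closed and nonempty, forcing $1\in F$), primeness gives $x\in F$ or $\neg x\in F$. It remains to rule out that both lie in $F$: if $x\in F$ and $\neg x\in F$, then $x\cdot \neg x=0\in F$, again contradicting properness. So exactly one of $x,\neg x$ belongs to $F$, which is the ultrafilter condition.

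I do not expect a serious obstacle; the only point requiring a little care is the De Morgan identity $\neg(x+y)=\neg x\cdot \neg y$, since it is not stated explicitly in the excerpt — I would insert its one-line verification via the uniqueness-of-complement characterization \ref{ComplementProperties}(a). Everything else reduces to the observations that a nonempty filter contains $1$, that a proper filter omits $0$, and that filters are closed under $\cdot$ and upward closure, all of which are immediate from the definitions just given.
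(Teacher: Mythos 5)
Your proof is correct and takes essentially the same approach as the paper: both directions argue by contradiction from the properness of $F$ (i.e.\ $0\notin F$), and in the forward direction both ultimately show that $(x+y)\cdot(\neg x)\cdot(\neg y)=0$ would lie in $F$. The only cosmetic difference is that you route through the De Morgan identity $\neg(x+y)=\neg x\cdot\neg y$ (which, as you note, needs a one-line verification via \ref{ComplementProperties}), whereas the paper avoids this by computing $(\neg x)\cdot(x+y)=(\neg x)\cdot y$ and then multiplying by $\neg y$ directly.
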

\begin{proof}
Assume that $F$ is an ultrafilter of $B$. Let $x,y\in B\setminus F$. Since $F$ is an ultrafilter, one has $\neg x,\neg  y\in F$. Seeking contradiction, assume that $x+y\in F$, then $(\neg x)\cdot(x+y)=(\neg x)\cdot y\in F$ and hence $((\neg x)\cdot y)\cdot(\neg y)=0\in F$, a contradiction.

Conversely, If $F$ is prime and $x\in B\setminus F$; then as $x+(\neg x)=1\in F$, $\neg x$ must be in $F$ and hence $F$ is an ultrafilter.
\end{proof}
\begin{Lemma}\label{UltrIsMax}
Let $(B,+,\cdot,\neg)$ be a Boolean algebra, and $F$ be a proper filter of $B$; $F$ is an ultrafilter if and only if it is a maximal filter.
\end{Lemma}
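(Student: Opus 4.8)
The plan is to prove both directions of the equivalence by combining the two lemmas already established, \ref{UltrIsPrime} (an ultrafilter is the same as a prime filter) and \ref{ProperFilterIsContainedInMaximal}, together with a direct argument relating maximality to the ultrafilter condition. For the direction ``ultrafilter implies maximal'', first I would suppose $F$ is an ultrafilter and let $G$ be a proper filter with $F\subsetneq G$; picking $x\in G\setminus F$, the ultrafilter property forces $\neg x\in F\subset G$, and then $x\cdot(\neg x)=0\in G$ since $G$ is closed under $\cdot$, contradicting that $G$ is proper. Hence no proper filter strictly contains $F$, so $F$ is maximal.

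For the converse, ``maximal implies ultrafilter'', I would suppose $F$ is a maximal filter and take any $x\in B$; I must show $x\in F$ or $\neg x\in F$ (not-both being automatic from properness, exactly as in the contradiction above with $x\cdot \neg x = 0$). Suppose $x\notin F$. The idea is to show the set $F\cup\{x\}$ generates a proper filter strictly larger than $F$, which by maximality must equal $F$, forcing a contradiction unless some consequence puts $\neg x$ in $F$. More precisely, I would consider whether $F\cup\{x\}$ satisfies the finite product property: if it does, then by \ref{SetsSFPPContainedInProperFilter} it lies in a proper filter $F'\supsetneq F$ (strictly, since $x\notin F$), contradicting maximality of $F$. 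So $F\cup\{x\}$ must fail the finite product property, meaning there are $f_1,\dots,f_n\in F$ with $f_1\cdots f_n\cdot x=0$. Setting $f:=f_1\cdots f_n\in F$, we get $f\cdot x=0$, hence $f\leqslant \neg x$ (since $f = f\cdot(x+\neg x) = f\cdot x + f\cdot\neg x = f\cdot \neg x$, so $f+\neg x = f\cdot\neg x + \neg x = \neg x$ by absorption, giving $f\leqslant\neg x$); then $\neg x\in F$ by the upward-closure axiom for filters.

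The main obstacle, such as it is, lies in the bookkeeping of the converse: correctly deducing $f\leqslant\neg x$ from $f\cdot x=0$ using only the Boolean algebra axioms and the order definition $f\leqslant y\iff f+y=y$, and being careful that the filter generated by $F\cup\{x\}$ is genuinely larger than $F$ (which holds because it contains $x\notin F$) so that maximality is actually violated. There is also a small case to dispatch: if $n=0$ in the failure of the finite product property, i.e. the empty product $x=0$ itself fails (meaning $x=0$), then trivially $\neg x=1\in F$. Alternatively, one can shortcut the whole converse by invoking \ref{ProperFilterIsContainedInMaximal} and \ref{UltrIsPrime}: a maximal filter $F$, by \ref{ProperFilterIsContainedInMaximal} applied carefully, cannot be properly extended, and showing it is prime (hence an ultrafilter by \ref{UltrIsPrime}) reduces to: if $x+y\in F$ but $x,y\notin F$, then maximality makes $F\cup\{x\}$ generate a filter containing $0$, giving $f\cdot x = 0$ for some $f\in F$, likewise $g\cdot y=0$, whence $(f\cdot g)\cdot(x+y)=0$ with $f\cdot g\in F$ and $x+y\in F$, a contradiction. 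I would present the first, self-contained argument as the primary proof since it avoids leaning on the prime-filter characterization.
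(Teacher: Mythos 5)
Your proposal is correct and follows essentially the same route as the paper: the forward direction is the same contradiction via $x\cdot(\neg x)=0$ landing in a proper filter, and the converse uses the finite product property of $F\cup\{x\}$ together with \ref{SetsSFPPContainedInProperFilter} to produce $f\in F$ with $f\cdot x=0$ and then deduce $f\leqslant\neg x$, exactly as in the paper. Your extra bookkeeping (the degenerate case $x=0$, and the explicit derivation of $f\leqslant\neg x$ from $f\cdot x=0$) is sound and only makes the argument more careful.
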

\begin{proof}
Assume that $F$ is an ultrafilter of $B$. Seeking contradiction, assume that $F$ is not a maximal filter. Let $M$ be a maximal filter containing $F$ and let $x\in M\setminus F$. Since $F$ is an ultrafilter, $\neg x\in F\subset M$. Thus, $x,\neg x\in M$ and in particular $x\cdot \neg x=0\in M$, a contradiction.

Conversely, let $F$ be maximal and $x\in B\setminus F$; notice that if $a\cdot x\neq 0$ for all $a\in F$, then $A:=F\cup \{x\}$ satisfies the finite product property. By \ref{SetsSFPPContainedInProperFilter}, $F$ is contained in a proper filter $F_A$. Since $F$ is maximal, $F=F_A$ which contradicts that fact that $x\in B\setminus F$. Therefore, there exists $a\in F$ such that $a\cdot x= 0$. Now, $a=a\cdot 1=a\cdot(x+(\neg x))=a\cdot (\neg x)\in F$, and since $a\cdot (\neg x)\leqslant \neg x$, $\neg x$ must be in $F$ and hence $F$ is an ultrafilter.
\end{proof}
\begin{Corollary}\label{UltraAreMaxSatTheFPP}
Let $(B,+,\cdot,\neg)$ be a Boolean algebra, and $F$ be an ultrafilter of $B$; if $x\in B$ such that $x\cdot a\neq 0$ for all $a\in F$, then $x\in F$.
\end{Corollary}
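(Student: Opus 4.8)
The plan is to argue by contraposition, using the defining dichotomy of an ultrafilter. First I would suppose that $x\notin F$. Since $F$ is an ultrafilter, for the element $x$ exactly one of $x$, $\neg x$ lies in $F$; as $x\notin F$, we must have $\neg x\in F$. Now I would apply the hypothesis ``$x\cdot a\neq 0$ for all $a\in F$'' to the particular element $a:=\neg x\in F$, which yields $x\cdot(\neg x)\neq 0$. But $x\cdot(\neg x)=0$ holds in every Boolean algebra by part $(4)$ of the definition, a contradiction. Hence $x\in F$, which is exactly the claim.

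I do not expect any genuine obstacle here: the only thing to check is that $\neg x$ is a legitimate witness to feed into the hypothesis, and this is immediate once $\neg x\in F$ has been established from the ultrafilter axiom. The whole argument is a one-line contradiction.

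An alternative route, should one prefer not to invoke the ultrafilter dichotomy directly, is to retrace the converse direction in the proof of \ref{UltrIsMax}: from $x\cdot a\neq 0$ for all $a\in F$ one first notes $x\neq 0$ (otherwise $x\cdot a=0$), and then the set $F\cup\{x\}$ satisfies the finite product property, since a finite product of its elements either lies in $F$ (hence is nonzero, $F$ being a proper filter) or has the form $x\cdot(a_1\cdot\ldots\cdot a_n)$ with $a_1\cdot\ldots\cdot a_n\in F$ (hence is nonzero by hypothesis). By \ref{SetsSFPPContainedInProperFilter} this set is contained in a proper filter $F_A$ with $F\subset F_A$ and $x\in F_A$; since $F$ is maximal by \ref{UltrIsMax}, $F=F_A$ and so $x\in F$. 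Both arguments give the conclusion, and I would put the shorter first one in the main text.
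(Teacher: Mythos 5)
Both of your arguments are correct. Your first (preferred) argument is genuinely different from the paper's: the paper takes your ``alternative route,'' namely observing that $A:=F\cup\{x\}$ satisfies the finite product property, invoking \ref{SetsSFPPContainedInProperFilter} to get a proper filter $F_A\supset F$ containing $x$, and concluding $F=F_A$ by maximality via \ref{UltrIsMax}. Your primary argument instead uses only the defining dichotomy of an ultrafilter: if $x\notin F$ then $\neg x\in F$, and feeding $a=\neg x$ into the hypothesis contradicts $x\cdot(\neg x)=0$. This is shorter and more self-contained --- it bypasses the generated-filter machinery and the equivalence of ultrafilters with maximal filters entirely, needing only the ultrafilter axiom and axiom $(4)$ of a Boolean algebra. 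The paper's route has the mild virtue of exhibiting the corollary as a direct instance of the maximality characterization it has just established (which is the pattern reused in \ref{PrincipalFilterAreUltra} and elsewhere), but your one-line contradiction is the cleaner proof of the statement as such, and your expanded write-up of the second route also correctly fills in the detail the paper leaves implicit (why a finite product involving $x$ reduces, by idempotence, to $x\cdot a$ with $a\in F$).
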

\begin{proof}
Notice that $A:=F\cup\{x\}$ satisfies the finite product property. Hence $F_A$ is a proper filer containing $F$. By the maximality of $F$, one has $F=F_A$ as wanted. 
\end{proof}
\begin{Example}\label{PrincipalFilterAreUltra}
Consider the power set Boolean algebra $(\mathcal{P}(X),\cup,\cap,\neg)$ and $x\in X$; the principal filter $F_x$ generated by $x$ is an ultrafilter. Moreover, as ultrafilters are maximal, if $F$ is an ultrafilter containing $\{x\}$, then $F_x=F$.
\end{Example}
\begin{Lemma}\label{HomoAndUlta}
Let $(B,+,\cdot,\neg)$ be a Boolean algebra, and $F$ be a proper filter of $B$; $F$ is an ultrafilter if and only if $F=\varphi_{{\textstyle\mathstrut}F}^{-1}\{1\}$ for some homomorphism $\varphi_{{\textstyle\mathstrut}F}:B\to \mathbb{Z}_2$.
\end{Lemma}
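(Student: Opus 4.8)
The plan is to prove both implications by identifying the homomorphism $\varphi_F$ with the characteristic function of $F$.

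For the forward implication, suppose $F$ is an ultrafilter and define $\varphi_F\colon B\to\mathbb{Z}_2$ by $\varphi_F(x)=1$ if $x\in F$ and $\varphi_F(x)=0$ otherwise; then $F=\varphi_F^{-1}\{1\}$ holds by construction, so it remains to check that $\varphi_F$ is a homomorphism. The conditions $\varphi_F(1_B)=1$ and $\varphi_F(0_B)=0$ are immediate: $1_B\in F$ because $F$ is a nonempty upward-closed set (any $a\in F$ satisfies $a\leqslant 1_B$), and $0_B\notin F$ because $F$ is proper. For multiplicativity, if $x,y\in F$ then $x\cdot y\in F$ by the defining property of a filter, while if, say, $x\notin F$ then $x\cdot y\leqslant x$ (by \ref{PartialOrderProperties}) forces $x\cdot y\notin F$ by upward closure; in either case $\varphi_F(x\cdot y)=\varphi_F(x)\cdot\varphi_F(y)$. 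For additivity one must remember that $+$ on $\mathbb{Z}_2$ is the Boolean join, so $1+1=1$: if at least one of $x,y$ lies in $F$, then $x+y$ lies above it and hence in $F$, matching $\varphi_F(x)+\varphi_F(y)=1$; and if neither $x$ nor $y$ lies in $F$, then $x+y\notin F$ because an ultrafilter is prime (\ref{UltrIsPrime}), matching $\varphi_F(x)+\varphi_F(y)=0$.

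For the converse, suppose $F=\varphi^{-1}\{1\}$ for a homomorphism $\varphi\colon B\to\mathbb{Z}_2$ and set $\varphi_F:=\varphi$. Recall from the remark following the definition of a homomorphism that $\varphi(\neg x)=\neg'\varphi(x)$ for every $x\in B$. Since $\mathbb{Z}_2=\{0,1\}$ and $\neg$ interchanges $0$ and $1$, for each $x\in B$ exactly one of $\varphi(x)$ and $\varphi(\neg x)$ equals $1$; equivalently, exactly one of $x$ and $\neg x$ belongs to $F$. Hence the proper filter $F$ is an ultrafilter.

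I do not expect a serious obstacle here: the only point requiring genuine care is that $\mathbb{Z}_2$ is being used as a Boolean algebra, so its addition is the join $\vee$ rather than the field addition $\oplus$, and the prime-filter property from \ref{UltrIsPrime} (equivalently \ref{UltrIsMax}) is precisely what is needed to verify that $\varphi_F$ respects $+$ in the only nontrivial case.
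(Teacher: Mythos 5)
Your proof is correct and follows essentially the same route as the paper: both directions use the characteristic function of $F$, with primeness of ultrafilters (\ref{UltrIsPrime}) handling additivity in the forward direction and the identity $\varphi(\neg x)=\neg\varphi(x)$ handling the converse. Your explicit remark that $+$ on $\mathbb{Z}_2$ must be read as the Boolean join (so $1+1=1$) is a worthwhile clarification that the paper uses implicitly.
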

\begin{proof}
Assume that $F$ is an ultrafilter, and define $\varphi_{{\textstyle\mathstrut}F}:B\to \mathbb{Z}_2$ by:\\
\begin{equation*}
\varphi_{{\textstyle\mathstrut}F}(x)=\begin{cases}
          0 \quad &\text{if} \, x \notin F \\
          1\quad &\text{if} \, x \in F \\
     \end{cases}
\end{equation*}
We claim that $\varphi_{{\textstyle\mathstrut}F}:B\to \mathbb{Z}_2$ is a homomorphism.\\
$\bullet$ Clearly, $\varphi_{{\textstyle\mathstrut}F}(0_B)=0$ and $\varphi_{{\textstyle\mathstrut}F}(1_B)=1$ as $1_B\in F$ and $0_B\notin F$.\\
$\bullet$ Let $x,y\in B$; on one hand, if $x+y\in F$ then either $x\in F$ or $y\in F$. Without loss of generality assume that $x\in F$, then $\varphi_{{\textstyle\mathstrut}F}(x)=1$ and $\varphi_{{\textstyle\mathstrut}F}(x)+\varphi_{{\textstyle\mathstrut}F}(y)=1+\varphi_{{\textstyle\mathstrut}F}(y)=1$. Hence, $1=\varphi_{{\textstyle\mathstrut}F}(x+y)=\varphi_{{\textstyle\mathstrut}F}(x)+\varphi_{{\textstyle\mathstrut}F}(y)$. On the other hand, if $x+y\notin F$ then $x\notin F$ and $y\notin F$. Hence, $0=\varphi_{{\textstyle\mathstrut}F}(x+y)=\varphi_{{\textstyle\mathstrut}F}(x)+\varphi_{{\textstyle\mathstrut}F}(y)$.\\
$\bullet$ Let $x,y\in B$; on one hand, if $x\cdot y\in F$, then $x\in F$ and $y\in F$. Hence, $1=\varphi_{{\textstyle\mathstrut}F}(x\cdot y)=\varphi_{{\textstyle\mathstrut}F}(x)\cdot \varphi_{{\textstyle\mathstrut}F}(y)$. On the other hand, if $x\cdot y\notin F$, then either $x\notin F$ or $y\notin F$. Without loss of generality assume that $x\notin F$, then $0=\varphi_{{\textstyle\mathstrut}F}(x\cdot y)=0\cdot \varphi_{{\textstyle\mathstrut}F}(y)=\varphi_{{\textstyle\mathstrut}F}(x)\cdot \varphi_{{\textstyle\mathstrut}F}(y)$. Therefore, $\varphi_{{\textstyle\mathstrut}F}:B\to \mathbb{Z}_2$ is a homomorphism and $F=\varphi_{{\textstyle\mathstrut}F}^{-1}\{1\}$.

Conversely, let  $\varphi_{{\textstyle\mathstrut}F}:B\to \mathbb{Z}_2$ is a homomorphism, and let $F:=\varphi_{{\textstyle\mathstrut}F}^{-1}\{1\}$. Clearly, $F$ is a proper filter. If $x\in B\setminus F$, then $\varphi_{{\textstyle\mathstrut}F}(\neg x)=\neg \varphi_{{\textstyle\mathstrut}F}(x)=\neg 0=1$ and hence $\neg x\in F$. 
\end{proof}
\begin{Definition}
Let $(B,+,\cdot,\neg)$ be a Boolean algebra; and $I\subset B$:\\
$\bullet$  $I$ is called an \textbf{ideal} if for all $a,b\in I$ and $x\in R$, then $x\cdot a, a+b\in I$.\\
$\bullet$  A proper ideal $I$ is called a \textbf{prime ideal} if for all $x,y \in B$ with $x\cdot y\in I$, then either $x\in I$ or $x\in I$.\\
$\bullet$ A proper ideal $I$ is called a \textbf{maximal ideal} if it is maximal with respect the inclusion.\\
\end{Definition}
\begin{Lemma}\label{HomoAndPrime}
Let $(B,+,\cdot,\neg)$ be a Boolean algebra, and $I$ be a proper ideal of $B$; $I$ is a prime ideal if and only if $I=\varphi_{{\textstyle\mathstrut}I}^{-1}\{0\}$ for some homomorphism $\varphi_{{\textstyle\mathstrut}I}:B\to \mathbb{Z}_2$.
\end{Lemma}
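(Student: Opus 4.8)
The plan is to dualize Lemma~\ref{HomoAndUlta} via the complement duality between ideals and filters of a Boolean algebra, so that essentially no new work is needed. First I would record the two elementary facts used throughout: a proper ideal $I$ never contains $1$ (otherwise $x=x\cdot 1\in I$ for every $x\in B$, forcing $I=B$), and $I$ does contain $0$ (since $0=0\cdot a\in I$ for any $a\in I$, consistent with the fact that $\varphi^{-1}\{0\}$ always contains $0_B$).

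For the forward implication, suppose $I$ is a prime ideal and set $F:=B\setminus I$. I would check that $F$ is an ultrafilter. It is nonempty since $1\in F$, and $0\notin F$, so $F$ is a proper filter once we verify the two filter axioms. It is closed under $\cdot$ because primeness of $I$ says exactly that $x\cdot y\in I$ implies $x\in I$ or $y\in I$; contrapositively, $x,y\in F$ gives $x\cdot y\in F$. It is upward closed because if $x\in F$, $x\leqslant y$, and $y\in I$, then by absorption $x=x\cdot(x+y)=x\cdot y$, and $x\cdot y\in I$ since $I$ absorbs products with arbitrary elements of $B$, contradicting $x\in F$; hence $y\in F$. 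Finally it satisfies the ultrafilter dichotomy: $x\cdot(\neg x)=0\in I$ forces, by primeness, $x\in I$ or $\neg x\in I$, i.e.\ $x\in F$ or $\neg x\in F$; and both cannot hold, since $x,\neg x\in I$ would give $x+(\neg x)=1\in I$, contradicting properness. Now Lemma~\ref{HomoAndUlta} produces a homomorphism $\varphi_F:B\to\mathbb{Z}_2$ with $F=\varphi_F^{-1}\{1\}$, and setting $\varphi_I:=\varphi_F$ yields $\varphi_I^{-1}\{0\}=B\setminus\varphi_F^{-1}\{1\}=B\setminus F=I$.

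For the converse, let $\varphi:B\to\mathbb{Z}_2$ be a homomorphism and put $I:=\varphi^{-1}\{0\}$. Then $I$ is an ideal because $\varphi(a+b)=\varphi(a)+\varphi(b)=0$ and $\varphi(x\cdot a)=\varphi(x)\cdot\varphi(a)=0$ whenever $a,b\in I$ and $x\in B$; it is proper because $\varphi(1_B)=1\neq 0$; and it is prime because $\mathbb{Z}_2$ has no zero divisors, so $x\cdot y\in I$ gives $\varphi(x)\cdot\varphi(y)=0$ in $\mathbb{Z}_2$, hence $\varphi(x)=0$ or $\varphi(y)=0$, i.e.\ $x\in I$ or $y\in I$.

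I do not expect any real obstacle; the argument is a mechanical dualization of \ref{HomoAndUlta}. The one spot that needs slight care is upward-closure of $F=B\setminus I$: it should be phrased through the multiplicative reformulation $x\leqslant y\iff x\cdot y=x$ of the order (a consequence of the absorption law), so that the ideal's absorption property $x\cdot a\in I$ is what gets applied. An alternative, if one prefers not to pass through filters at all, is to imitate the proof of \ref{HomoAndUlta} verbatim: define $\varphi_I$ to be the indicator function of $B\setminus I$ and verify additivity and multiplicativity by cases; but the complement-duality route is shorter because it reuses work already done.
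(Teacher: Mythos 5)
Your proof is correct, but it takes a genuinely different route from the paper. The paper proves both directions from scratch: it defines $\varphi_I$ as the indicator function of $B\setminus I$ and verifies additivity by cases (using the absorption law to show $x+y\in I$ iff $x,y\in I$) and multiplicativity (using primeness), then checks the converse directly. You instead prove that $F:=B\setminus I$ is an ultrafilter and invoke Lemma~\ref{HomoAndUlta} to manufacture the homomorphism, which reuses work already done and, as a byproduct, establishes the forward half of the complement duality between prime ideals and ultrafilters --- a fact the paper only records afterwards as a corollary deduced from \emph{both} lemmas, so your route would make that corollary partly redundant. There is no circularity, since \ref{HomoAndUlta} precedes \ref{HomoAndPrime}. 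The trade-off is that you must verify the filter axioms for $B\setminus I$ (in particular upward closure via $x\leqslant y\Rightarrow x=x\cdot y$, which you handle correctly), roughly the same effort as the paper's direct check of additivity. One small wording slip: in the dichotomy step, ``$x\in I$ or $\neg x\in I$'' is equivalent to ``not both $x$ and $\neg x$ lie in $F$,'' not to ``$x\in F$ or $\neg x\in F$'' as your ``i.e.'' suggests; the latter is what your subsequent clause (both in $I$ would force $1\in I$) actually delivers. Since both halves of ``exactly one of $x,\neg x$ lies in $F$'' are justified in your text, this is a mislabeling rather than a gap.
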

\begin{proof}
Assume that $I$ is a prime ideal, and define $\varphi_{{\textstyle\mathstrut}I}:B\to \mathbb{Z}_2$ by:\\
\begin{equation*}
\varphi_{{\textstyle\mathstrut}I}(x)=\begin{cases}
          0 \quad &\text{if} \, x \in I \\
          1\quad &\text{if} \, x \notin I \\
     \end{cases}
\end{equation*}
We claim that $\varphi_{{\textstyle\mathstrut}I}:B\to \mathbb{Z}_2$ is a homomorphism.\\
$\bullet$ Clearly, $\varphi_{{\textstyle\mathstrut}I}(0_B)=0$ and $\varphi_{{\textstyle\mathstrut}I}(1_B)=1$ as $1_B\notin I$ and $0_B\in I$.\\
$\bullet$ Let $x,y\in B$; notice that $x+y\in I$ if and only if $x,y\in I$. Indeed, if $x\notin I$ and $x+y\in I$, then by the absorption law one has $x\cdot(x+y)=x+x\cdot y=x\in I$, a contradiction. Now, if $x+y\in I$ then $x,y\in I$ and hence $0=\varphi_{{\textstyle\mathstrut}I}(x+y)=\varphi_{{\textstyle\mathstrut}I}(x)+\varphi_{{\textstyle\mathstrut}I}(y)$. If $x+y\notin I$ then $x,y\notin I$ and hence  $1=\varphi_{{\textstyle\mathstrut}I}(x+y)=\varphi_{{\textstyle\mathstrut}I}(x)+\varphi_{{\textstyle\mathstrut}I}(y)$ as $1+1=1$.\\
$\bullet$ Let $x,y\in B$; if either $x\in I$ or $y\in I$, then $x\cdot y\in I$ and hence $0=\varphi_{{\textstyle\mathstrut}I}(x\cdot y)=\varphi_{{\textstyle\mathstrut}I}(x)\cdot\varphi_{{\textstyle\mathstrut}I}(y)$. If $x,y\notin I$, then as $I$ is prime $x\cdot y\notin I$ and hence $1=\varphi_{{\textstyle\mathstrut}I}(x\cdot y)=\varphi_{{\textstyle\mathstrut}I}(x)\cdot\varphi_{{\textstyle\mathstrut}I}(y)$. 

Conversely, let  $\varphi_{{\textstyle\mathstrut}I}:B\to \mathbb{Z}_2$ is a homomorphism such that $I:=\varphi_{{\textstyle\mathstrut}I}^{-1}\{0\}$. Clearly, $I$ is a proper ideal. Seeking contradiction, assume that $x,y\in B\setminus I$ such that $x\cdot y\in I$, then  $0=\varphi_{{\textstyle\mathstrut}I}(x\cdot y)=\varphi_{{\textstyle\mathstrut}I}(x)\cdot \varphi_{{\textstyle\mathstrut}I}(y)=1$, a contradiction. 
\end{proof}
\begin{Corollary}
Let $(B,+,\cdot,\neg)$ be a Boolean algebra, and $F\subset B$. $F$ is an ultrafilter of $B$ if and only if $B\setminus F$ is a prime ideal.
\end{Corollary}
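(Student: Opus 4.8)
The plan is to deduce the statement directly from Lemmas \ref{HomoAndUlta} and \ref{HomoAndPrime}, exploiting the fact that $\mathbb{Z}_2=\{0,1\}$ has exactly two elements, so that for any homomorphism $\varphi:B\to\mathbb{Z}_2$ the preimages $\varphi^{-1}\{1\}$ and $\varphi^{-1}\{0\}$ are complementary subsets of $B$, i.e. $\varphi^{-1}\{1\}=B\setminus\varphi^{-1}\{0\}$.

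For the forward direction, suppose $F$ is an ultrafilter. By Lemma \ref{HomoAndUlta} there is a homomorphism $\varphi:B\to\mathbb{Z}_2$ with $F=\varphi^{-1}\{1\}$, and then $B\setminus F=\varphi^{-1}\{0\}$ since $B=\varphi^{-1}\{0\}\sqcup\varphi^{-1}\{1\}$. As $\varphi(1_B)=1$, the set $\varphi^{-1}\{0\}$ does not contain $1_B$, so it is a proper ideal, and Lemma \ref{HomoAndPrime} shows it is a prime ideal. For the converse, suppose $B\setminus F$ is a prime ideal; by Lemma \ref{HomoAndPrime} we have $B\setminus F=\varphi^{-1}\{0\}$ for some homomorphism $\varphi:B\to\mathbb{Z}_2$, hence $F=B\setminus\varphi^{-1}\{0\}=\varphi^{-1}\{1\}$. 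One small point: to invoke Lemma \ref{HomoAndUlta} we should know $F$ is a proper filter, which is immediate since $\varphi^{-1}\{1\}$ is closed under $\cdot$ (as $\varphi$ is a homomorphism and $1\cdot1=1$ in $\mathbb{Z}_2$), is upward closed (if $x\in F$ and $x\leqslant y$ then $y=x+y$, so $\varphi(y)=\varphi(x)+\varphi(y)=1$), and omits $0_B$. Then Lemma \ref{HomoAndUlta} gives that $F$ is an ultrafilter.

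I do not expect any genuine obstacle here; the only thing worth being careful about is that Lemmas \ref{HomoAndUlta} and \ref{HomoAndPrime} each produce a homomorphism, and one must observe that a single $\varphi$ witnesses both statements at once, which is exactly the complementation identity $\varphi^{-1}\{1\}=B\setminus\varphi^{-1}\{0\}$. Alternatively, one could avoid the homomorphism lemmas entirely and argue directly that $B\setminus F$ is closed under $+$ (using Lemma \ref{UltrIsPrime}, i.e. that an ultrafilter is prime) and absorbs $\cdot$-multiplication by arbitrary elements, together with $1_B\notin B\setminus F$ and primeness; but routing through $\mathbb{Z}_2$ is the shortest path given the machinery already in place.
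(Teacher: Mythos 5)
Your proposal is correct and follows exactly the paper's route: the paper's entire proof is ``Apply Lemmas \ref{HomoAndUlta} and \ref{HomoAndPrime},'' and you have simply spelled out the complementation identity $\varphi^{-1}\{1\}=B\setminus\varphi^{-1}\{0\}$ that makes a single homomorphism witness both lemmas at once. The extra care you take in checking that $F$ is a proper filter before invoking Lemma \ref{HomoAndUlta} is a reasonable bit of diligence the paper leaves implicit.
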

\begin{proof}
Apply \ref{HomoAndUlta} and \ref{HomoAndPrime}.
\end{proof}
From \ref{UltrIsPrime}, \ref{UltrIsMax}, \ref{HomoAndUlta} and \ref{HomoAndPrime}, we have:
\begin{Corollary}
Let $(B,+,\cdot,\neg)$ be a Boolean algebra, and $F\subset B$ be a proper filter. The following are equivalent:\\
($a$) $F$ is an ultrafilter.\\
($b$) $F$ is a prime filter.\\
($c$) $F$ is a maximal filter.\\
($d$) $B\setminus F$ is a prime ideal.
\end{Corollary}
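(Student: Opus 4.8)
The plan is simply to assemble the four equivalences from the lemmas already established in this subsection; no new argument is needed beyond bookkeeping. I would show that each of $(b)$, $(c)$ and $(d)$ is individually equivalent to $(a)$, which of course forces all four conditions to be pairwise equivalent.

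First, the equivalence $(a)\Leftrightarrow(b)$ is exactly the content of Lemma \ref{UltrIsPrime}: a proper filter is an ultrafilter precisely when it is prime. Next, $(a)\Leftrightarrow(c)$ is Lemma \ref{UltrIsMax}: a proper filter is an ultrafilter precisely when it is maximal with respect to inclusion. Finally, $(a)\Leftrightarrow(d)$ is the immediately preceding corollary, which itself comes from combining Lemma \ref{HomoAndUlta} with Lemma \ref{HomoAndPrime}: $F$ is an ultrafilter iff $F=\varphi^{-1}\{1\}$ for some homomorphism $\varphi:B\to\mathbb{Z}_2$, and for such a $\varphi$ one has $B\setminus F=\varphi^{-1}\{0\}$, which by Lemma \ref{HomoAndPrime} holds iff $B\setminus F$ is a prime ideal; conversely a prime ideal $I=\varphi^{-1}\{0\}$ yields the ultrafilter $B\setminus I=\varphi^{-1}\{1\}$.

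The only point that requires any attention is that the hypotheses line up: each cited lemma is stated for a \emph{proper} filter $F$, which is precisely the standing assumption here, and the complement $B\setminus F$ is automatically a \emph{proper} ideal, since $1\in F$ gives $1\notin B\setminus F$ while properness of $F$ gives $0\in B\setminus F$, so $B\setminus F$ is nonempty. There is no genuine obstacle, so I would keep the proof to essentially one line, citing \ref{UltrIsPrime}, \ref{UltrIsMax}, \ref{HomoAndUlta} and \ref{HomoAndPrime}.
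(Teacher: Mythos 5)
Your proposal is correct and matches the paper's own treatment: the paper derives this corollary directly from Lemmas \ref{UltrIsPrime}, \ref{UltrIsMax}, \ref{HomoAndUlta} and \ref{HomoAndPrime} (the last two via the preceding corollary for the equivalence with $(d)$), exactly as you do. Your extra remark checking that $B\setminus F$ is a proper ideal is a harmless and sensible bit of bookkeeping.
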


\begin{Notation}
Let $B$ be a Boolean algebra and $x\in B$, we put: $$U_x=\{F\in\mathcal{U}(B):x\in F\}.$$ 
\end{Notation}
\begin{Lemma}\label{PropertiesOfBasisElements}
Let $(B,+,\cdot,\neg)$ be a Boolean algebra. Then:\\
($a$) For all $x,y\in B$, $U_x\cap U_y=U_{x\cdot y}$.\\
($b$) If $x_1,...,x_n\in B$, then $\bigcap\limits_{i=1}^n U_{x_i}=\emptyset$ if and only if $x_1\cdot...\cdot x_n=0$.\\
($c$) For all $x,y\in B$, $U_x\cup U_y=U_{x+y}$.\\
($d$) For all $x\in B$, $\mathcal{U}(B)\setminus U_x=U_{\neg x}$.\\
($e$) The family $\{U_x:x\in B\}$ is a basis for a topology $\mathcal{T}_{\mathcal{U}(B)}$ on $\mathcal{U}(B)$ in which $U_x$ is a clopen subset for all $x\in B$. 
\end{Lemma}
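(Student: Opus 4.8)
The plan is to prove the five parts in the order stated, since each later item leans on the earlier ones together with the filter-theoretic results already established. The only part carrying genuine content is (b); the rest are short formal consequences of the filter axioms, the order properties in \ref{PartialOrderProperties}, and the characterizations of ultrafilters.

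For (a) I would argue by double inclusion. If $F\in U_x\cap U_y$ then $x,y\in F$, and closure of a filter under $\cdot$ gives $x\cdot y\in F$, so $F\in U_{x\cdot y}$. Conversely, if $x\cdot y\in F$, then $x\cdot y\leqslant x$ and $x\cdot y\leqslant y$ by \ref{PartialOrderProperties}(b), and upward-closedness of $F$ forces $x,y\in F$. Part (b) then follows by iterating (a) to get $\bigcap_{i=1}^n U_{x_i}=U_{x_1\cdot\,\cdots\,\cdot x_n}$, which reduces the claim to: $U_z=\emptyset$ if and only if $z=0$. The forward direction is immediate, since no proper filter contains $0$ and every ultrafilter is proper. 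For the converse, if $z\neq 0$ then $\{z\}$ satisfies the finite product property, so by \ref{SetsSFPPContainedInProperFilter} it generates a proper filter $F_z$; applying \ref{ProperFilterIsContainedInMaximal} to the element $0\in B\setminus F_z$ yields a maximal filter $M\supseteq F_z$, which is an ultrafilter by \ref{UltrIsMax} and satisfies $z\in M$, so $U_z\neq\emptyset$.

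For (c), again double inclusion: if $F\in U_x\cup U_y$, say $x\in F$, then $x\leqslant x+y$ by \ref{PartialOrderProperties}(b) gives $x+y\in F$; conversely, if $x+y\in F$ then primeness of $F$ (ultrafilters are prime by \ref{UltrIsPrime}) gives $x\in F$ or $y\in F$, i.e. $F\in U_x\cup U_y$. Part (d) is a direct translation of the defining property of an ultrafilter: $F\notin U_x$ means $x\notin F$, which for an ultrafilter is equivalent to $\neg x\in F$ (the ``either/or'' gives one direction, the ``not both'' the other), i.e. $F\in U_{\neg x}$.

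Finally, for (e) I would check the two basis axioms. Covering: $U_1=\mathcal{U}(B)$, because any nonempty filter contains $1$ (as $x\leqslant 1$ for every $x$). Stability under finite intersections: this is precisely (a), so the intersection of any two members of the family is again a member. Hence $\{U_x:x\in B\}$ is a basis for a topology $\mathcal{T}_{\mathcal{U}(B)}$ in which each $U_x$ is open, and each $U_x$ is also closed because its complement equals $U_{\neg x}$ by (d), which is open; thus each $U_x$ is clopen. As noted, the one nontrivial step is the converse direction of (b), which rests on the existence of ultrafilters via Zorn's lemma, already packaged into \ref{ProperFilterIsContainedInMaximal} and \ref{UltrIsMax}.
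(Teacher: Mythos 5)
Your proof is correct and follows essentially the same route as the paper's: double inclusion via the filter axioms and \ref{PartialOrderProperties} for (a) and (c), reduction of (b) to ``$U_z=\emptyset$ iff $z=0$'' via \ref{SetsSFPPContainedInProperFilter}, \ref{ProperFilterIsContainedInMaximal} and \ref{UltrIsMax}, the ultrafilter dichotomy for (d), and $U_1=\mathcal{U}(B)$ plus (a) and (d) for (e). The only difference is that you spell out the details the paper leaves implicit (e.g.\ instantiating \ref{ProperFilterIsContainedInMaximal} at $x=0$ to get a maximal filter containing $F_z$), which is a faithful elaboration rather than a different argument.
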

\begin{proof}
($a$) Let $x,y\in B$, then $F\in U_x\cap U_y$ if and only if $x\cdot y\in F$.\\
($b$) By \ref{SetsSFPPContainedInProperFilter}, \ref{ProperFilterIsContainedInMaximal} and \ref{UltrIsMax}, $U_{x}=\emptyset$ if and only if $x=0$. Applying part ($a$), $\bigcap\limits_{i=1}^n U_{x_i}=\emptyset$ if and only if $x_1\cdot...\cdot x_n=0$.\\
($c$) Let $x,y\in B$. If $F\in U_x\cup U_y$, then either $x\in F$ or $y\in F$; in either case, $x+y$ must be in $F$. Conversely,  if $F\in U_{x+y}$, then $x+y\in F$ and hence either $x\in F$ or $y\in F$ as $F$ is prime.\\
($d$) Let $x\in B$, as $F$ is an ultrafilter then $x\notin F$ if and only if  $\neg x\in F$.\\
($e$) The family $\{U_x:x\in B\}$ being basis follows from part ($a$) and the fact that $U_1=\mathcal{U}(B)$. Part ($c$) shows that $U_x$ is a clopen subset for all $x\in B$.
\end{proof}
\begin{Theorem}\label{SpaceOfUltraOfBoolean}
Let $B$ be a Boolean algebra, the topological space $(\mathcal{U}(B),\mathcal{T}_{\mathcal{U}(B)})$ is a Stone's space.
\end{Theorem}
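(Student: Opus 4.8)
The plan is to verify directly the two defining properties of a Stone's space: that $\mathcal{U}(B)$ is compact and that it is Hausdorff and totally disconnected. The total disconnectedness will be essentially free, since Lemma \ref{PropertiesOfBasisElements}(e) already exhibits $\{U_x : x\in B\}$ as a basis of clopen sets, i.e. $\mathcal{U}(B)$ is zero-dimensional; once Hausdorffness (hence $T_1$) is in place, Proposition \ref{Zero-dimensionalIsAlwysTotallyDisconnected} applies and yields total disconnectedness. So the real content is compactness.

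For Hausdorffness I would argue as follows: given distinct ultrafilters $F,G\in\mathcal{U}(B)$, pick $x\in B$ belonging to exactly one of them, say $x\in F$ and $x\notin G$; since $G$ is an ultrafilter, $\neg x\in G$. Then $F\in U_x$ and $G\in U_{\neg x}$, and by Lemma \ref{PropertiesOfBasisElements}(d) the sets $U_x$ and $U_{\neg x}=\mathcal{U}(B)\setminus U_x$ are disjoint open sets separating $F$ and $G$. Hence $\mathcal{U}(B)$ is Hausdorff, and the disconnectedness claim follows as indicated above.

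For compactness I would use the basic-open-cover criterion: a space is compact provided every cover by members of a fixed basis has a finite subcover (equivalently, phrased dually, every family of basic closed sets with the finite intersection property has nonempty intersection). So suppose $\mathcal{U}(B)=\bigcup_{i\in I}U_{x_i}$ has no finite subcover; I may assume $\mathcal{U}(B)\neq\emptyset$. For every finite $J\subseteq I$ we then have $\bigcup_{i\in J}U_{x_i}\neq\mathcal{U}(B)$, and taking complements, $\bigcap_{i\in J}U_{\neg x_i}\neq\emptyset$; by Lemma \ref{PropertiesOfBasisElements}(a) this intersection equals $U_{\prod_{i\in J}\neg x_i}$, so by Lemma \ref{PropertiesOfBasisElements}(b) we get $\prod_{i\in J}\neg x_i\neq 0$. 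Thus $A:=\{\neg x_i : i\in I\}$ satisfies the finite product property, so by Proposition \ref{SetsSFPPContainedInProperFilter} it lies in a proper filter $F_A$, and since $F_A$ is proper it extends to a maximal filter $M$ (Lemma \ref{ProperFilterIsContainedInMaximal}, or Zorn's Lemma directly), which is an ultrafilter by Lemma \ref{UltrIsMax}. Then $\neg x_i\in M$ for every $i$, hence $x_i\notin M$ for every $i$, i.e. $M\notin U_{x_i}$ for all $i$ — contradicting that the $U_{x_i}$ cover $\mathcal{U}(B)$. Therefore $\mathcal{U}(B)$ is compact, completing the proof.

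The only genuine obstacle is the compactness step, and it rests entirely on the dictionary translating finite intersections of basic sets into finite products in $B$ (Lemma \ref{PropertiesOfBasisElements}) together with the fact that any subset of $B$ with the finite product property sits inside an ultrafilter (Propositions \ref{SetsSFPPContainedInProperFilter}, \ref{ProperFilterIsContainedInMaximal}, \ref{UltrIsMax}); the rest is bookkeeping. The one point I would take care to state explicitly is the reduction of compactness to covers by basic sets, since the argument is most natural there rather than for arbitrary open covers.
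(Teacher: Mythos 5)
Your proposal is correct and takes essentially the same route as the paper: both reduce compactness to the dictionary between the finite intersection property of the sets $U_x$ and the finite product property in $B$ (Lemma \ref{PropertiesOfBasisElements}), then extend the resulting set to an ultrafilter via \ref{SetsSFPPContainedInProperFilter} and \ref{UltrIsMax}; your cover-with-no-finite-subcover phrasing is just the contrapositive of the paper's closed-sets-with-FIP phrasing, and the Hausdorff argument is identical. The only cosmetic difference is that you derive total disconnectedness directly from zero-dimensionality via \ref{Zero-dimensionalIsAlwysTotallyDisconnected}, which is if anything slightly cleaner than the paper's appeal to \ref{CharOfTotallyDisconnectedSpaces}.
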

\begin{proof}
$\bullet$ $\mathcal{U}(B)$ is Hausdorff. Indeed, let $F_1,F_2\in\mathcal{U}(B)$ be distinct ultrafilters and let $x\in F_1\setminus F_2$, then $\neg x\in F_2$ and hence $F_1$ and $F_2$ are contained in the disjoint open sets $U_x$ and $U_{\neg x}$, respectively.\\
$\bullet$ $\mathcal{U}(B)$ is compact. To this end, it suffices to show that for arbitrary collection $\mathcal{C}$ of closed sets of $\mathcal{U}(B)$ that satisfies the finite intersection property, then $\bigcap\limits_{C\in\mathcal{C}} C\neq\emptyset$. Notice that for every $C\in\mathcal{C}$, there exists $A_{C}\subset B$ such that $C=\bigcap\limits_{x\in A_{C}} U_{x}$. Put $A:=\bigcup\limits_{C\in\mathcal{C}}A_C$, and notice that as $\mathcal{C}$ satisfies the finite intersection property, the family $\{U_x:x\in A\}$ satisfies the finite intersection property and hence, by \ref{PropertiesOfBasisElements} part ($b$), $A$ satisfies the finite product property. Let $F_A$ be an ultrafilter of $B$ containing $A$. Therefore, $F_A\in \bigcap\limits_{x\in A} U_{x}=\bigcap\limits_{C\in\mathcal{C}} C$.\\
$\bullet$ Since $(\mathcal{U}(B),\mathcal{T}_{\mathcal{U}(B)})$ is locally compact and it admits a basis consists of clopen subsets, by \ref{CharOfTotallyDisconnectedSpaces} it is totally disconnected. 
\end{proof}

It is worth mentioning that if $G$ is a group acts on $X$; the Stone's space of the Boolean algebra $\faktor{Comm_G(X)}{\sim}$, given in \ref{SpaceOEnds}, is called \textbf{the space of ends} of $X$ \cite{Corn}. 

\begin{Notation}
For $A\subset X$, the family of all principal filters $F_a$ such that $a\in A$ is denoted by $Pr(A)$; that is $Pr(A):=\{F_a\in \mathcal{U}(\mathcal{P}(X)):a\in A\}$. 
\end{Notation}
\begin{Proposition}
Let $F\in \mathcal{U}(\mathcal{P}(X))$ and $A\in \mathcal{P}(X)$. $A\in F$ if and only $F\in cl(Pr(A))$ where the closure is taking in the topological space $\mathcal{U}(\mathcal{P}(X))$.
\end{Proposition}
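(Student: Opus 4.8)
The plan is to use the basic open sets $U_S=\{F\in\mathcal{U}(\mathcal{P}(X)):S\in F\}$ from \ref{PropertiesOfBasisElements} as the handle on the topology of $\mathcal{U}(\mathcal{P}(X))$. Since $\{U_S:S\in\mathcal{P}(X)\}$ is a basis, a point $F$ lies in $cl(Pr(A))$ exactly when every basic neighborhood of $F$ — that is, every $U_S$ with $S\in F$ — meets $Pr(A)$. Now $F_a\in U_S$ iff $a\in S$, and $F_a\in Pr(A)$ iff $a\in A$, so ``$U_S$ meets $Pr(A)$'' is just ``$A\cap S\neq\emptyset$''. Hence the whole statement reduces to the equivalence: $A\in F$ if and only if $A\cap S\neq\emptyset$ for every $S\in F$, and I would prove the two directions of this.

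For the forward direction, suppose $A\in F$ and let $U_S$ be a basic neighborhood of $F$, so $S\in F$. Because $F$ is a filter it is closed under $\cdot=\cap$, so $A\cap S\in F$; because $F$ is a proper filter, $0=\emptyset\notin F$, so $A\cap S\neq\emptyset$. Choosing $a\in A\cap S$ gives $F_a\in Pr(A)\cap U_S$. Since $U_S$ was an arbitrary basic neighborhood of $F$, this yields $F\in cl(Pr(A))$.

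For the converse I would argue by contraposition, this time using that $F$ is an \emph{ultra}filter. If $A\notin F$ then $X\setminus A\in F$, so $U_{X\setminus A}$ is a (basic) open neighborhood of $F$. But for every $a\in A$ we have $a\notin X\setminus A$, i.e. $X\setminus A\notin F_a$, so $F_a\notin U_{X\setminus A}$; hence $U_{X\setminus A}\cap Pr(A)=\emptyset$ and therefore $F\notin cl(Pr(A))$.

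There is no serious obstacle: the argument is a direct unwinding of the definitions of the basis $\{U_S\}$ and of filter/ultrafilter. The only two points one must not overlook are that each principal filter $F_a$ is genuinely an ultrafilter (\ref{PrincipalFilterAreUltra}), so that $Pr(A)\subset\mathcal{U}(\mathcal{P}(X))$ is meaningful, and that membership in the closure may be tested on the clopen basic sets $U_S$ alone rather than on arbitrary open sets.
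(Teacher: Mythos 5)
Your proof is correct, and the forward direction is word-for-word the paper's argument: for a basic neighborhood $U_S$ of $F$ with $S\in F$, closure of $F$ under intersection and properness give $A\cap S\neq\emptyset$, and any $a\in A\cap S$ yields $F_a\in U_S\cap Pr(A)$. The only divergence is in the converse. The paper argues directly: from $F\in cl(Pr(A))$ it extracts $A\cap B\neq\emptyset$ for every $B\in F$ and then invokes Corollary \ref{UltraAreMaxSatTheFPP} (the maximality criterion for membership in an ultrafilter) to conclude $A\in F$. You instead argue by contraposition, using the defining complementation property of ultrafilters: $A\notin F$ forces $X\setminus A\in F$, and $U_{X\setminus A}$ is then an explicit clopen neighborhood of $F$ disjoint from $Pr(A)$. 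Both are sound and of comparable length; yours is marginally more self-contained in that it bypasses the auxiliary corollary and produces a concrete separating neighborhood, while the paper's version makes visible the ``meets every member of $F$'' criterion that it reuses elsewhere (e.g.\ in the proof of \ref{StoneChecCompOfDiscSpace}). Your closing remarks --- that principal filters are ultrafilters by \ref{PrincipalFilterAreUltra} and that closure may be tested on basic open sets --- are exactly the points that need to be (and are) in order.
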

\begin{proof}
Assume that $A\in F$, and let $U_B$ be a neighborhood of $F$. Since $A,B\in F$, $A\cap B\neq\emptyset$. Pick $a\in A\cap B$, then $F_a\in U_B\cap Pr(A)$.

Conversely, assume that $F\in cl(Pr(A))$. By \ref{UltraAreMaxSatTheFPP}, to show that $A\in F$, it suffices to show that for every $B\in F$, $A\cap B\neq\emptyset$. Indeed, if $B\in F$, then $F\in U_B$ and hence $U_B\cap Pr(A)\neq\emptyset$. Therefore, there exists $a\in A$ such that $F_a\in U_B$, and hence $a\in A\cap B$ as desired. 
\end{proof}

\begin{Corollary}\label{ClopenSetsAreBasisElements}
Let $B$ be a Boolean algebra and $C$ be a clopen subset of $\mathcal{U}(B)$. There exists $x\in B$ such that $C=U_x$, where $U_x=\{F\in\mathcal{U}(B):x\in F\}$.
\end{Corollary}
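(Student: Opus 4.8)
The plan is to exploit compactness of $\mathcal{U}(B)$ together with the fact, established in \ref{PropertiesOfBasisElements}, that the basic clopen sets $U_x$ are closed under finite unions. Since $C$ is clopen in the compact space $\mathcal{U}(B)$ (Theorem \ref{SpaceOfUltraOfBoolean}), it is in particular closed, hence compact as a subspace. On the other hand $C$ is open, and $\{U_x : x\in B\}$ is a basis for $\mathcal{T}_{\mathcal{U}(B)}$ by \ref{PropertiesOfBasisElements}($e$), so $C$ is a union of members of this basis.

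First I would write $C = \bigcup_{x\in A} U_x$ for some $A\subset B$. By compactness of $C$, this cover admits a finite subcover, so there exist $x_1,\dots,x_n\in A$ with $C = \bigcup_{i=1}^n U_{x_i}$. Next I would apply \ref{PropertiesOfBasisElements}($c$) inductively to get $\bigcup_{i=1}^n U_{x_i} = U_{x_1+\cdots+x_n}$. Setting $x := x_1+\cdots+x_n \in B$ then gives $C = U_x$, as desired. (The degenerate case $C=\emptyset$ is covered by taking $x = 0$, since $U_0 = \emptyset$ by \ref{PropertiesOfBasisElements}($b$).)

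There is essentially no hard step here: the argument is the standard "clopen subset of a Stone space comes from the Boolean algebra" compactness trick, and every ingredient — compactness, the basis property, and the identity $U_x\cup U_y = U_{x+y}$ — is already available from the preceding results. The only point worth stating carefully is that a clopen subset of a compact space is itself compact, so that the open cover by basic sets can be reduced to a finite one.
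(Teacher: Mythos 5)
Your argument is correct and is essentially identical to the paper's own proof: both write $C$ as a union of basic clopen sets $U_x$, use that $C$ is closed in the compact space $\mathcal{U}(B)$ to extract a finite subcover, and then apply $U_x\cup U_y = U_{x+y}$ to collapse the finite union into a single $U_x$. No gaps; the remark about the empty case is a harmless extra.
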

\begin{proof}
On one hand, $C$ is an open subset of $\mathcal{U}(B)$, so $C$ there exists a subset $A\subset B$ such that $C=\bigcup\limits_{x\in A} U_{x}$. On the other hand, $C$ is a closed subset of the Hausdorff compact space $\mathcal{U}(B)$ and hence it is compact. In particular, there exist $x_1,...,x_n\in A$ such that $C=\bigcup\limits_{i=1}^n U_{x_i}$. Put $x=x_1+...+x_n$, then $C=U_x$ as desired. 
\end{proof}

\begin{Theorem}\label{IsoBooleansHaveHomeoStonesSpaces}
Every homomorphism $\varphi:B_1\to B_2$ between Boolean algebras induces a continuous map $\widetilde{\varphi}:\mathcal{U}(B_2)\to \mathcal{U}(B_1)$ between their associated Stone's spaces. If, moreover, $\varphi:B_1\to B_2$ is an isomorphism, then its induced map $\widetilde{\varphi}:\mathcal{U}(B_2)\to \mathcal{U}(B_1)$ is a homeomorphism.
\end{Theorem}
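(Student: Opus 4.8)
The plan is to define $\widetilde{\varphi}$ on ultrafilters by pullback along $\varphi$, verify well-definedness via the homomorphism characterization of ultrafilters, get continuity for free from the standard basis $\{U_x\}$, and then deduce the isomorphism statement from the evident functoriality of the construction.

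First I would set $\widetilde{\varphi}(F):=\varphi^{-1}(F)=\{x\in B_1:\varphi(x)\in F\}$ for $F\in\mathcal{U}(B_2)$. The one point that genuinely needs checking is that $\varphi^{-1}(F)$ is again an ultrafilter. The cleanest route is Lemma \ref{HomoAndUlta}: write $F=\psi^{-1}\{1\}$ for a homomorphism $\psi\colon B_2\to\mathbb{Z}_2$; then $\psi\circ\varphi\colon B_1\to\mathbb{Z}_2$ is a homomorphism and $\varphi^{-1}(F)=(\psi\circ\varphi)^{-1}\{1\}$, so \ref{HomoAndUlta} applied in reverse shows $\varphi^{-1}(F)\in\mathcal{U}(B_1)$. (Alternatively, one checks directly that $\varphi^{-1}(F)$ is a proper filter, using $\varphi(0_{B_1})=0_{B_2}\notin F$ together with the fact that $\varphi$ preserves $\cdot$ and the order $\leqslant$, and that it contains exactly one of $x,\neg x$ for each $x\in B_1$, using $\varphi(\neg x)=\neg\varphi(x)$ noted after the definition of a homomorphism.)

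For continuity, I would invoke \ref{PropertiesOfBasisElements}(e): $\{U_x:x\in B_1\}$ is a basis of $\mathcal{U}(B_1)$, so it suffices to show each preimage $\widetilde{\varphi}^{-1}(U_x)$ is open. Unwinding the definitions,
$$\widetilde{\varphi}^{-1}(U_x)=\{F\in\mathcal{U}(B_2):x\in\varphi^{-1}(F)\}=\{F\in\mathcal{U}(B_2):\varphi(x)\in F\}=U_{\varphi(x)},$$
which is clopen in $\mathcal{U}(B_2)$ by \ref{PropertiesOfBasisElements}. Hence $\widetilde{\varphi}$ is continuous.

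For the isomorphism case I would first record the two functoriality identities, each immediate from the pullback formula $(\psi\circ\varphi)^{-1}(F)=\varphi^{-1}(\psi^{-1}(F))$: for composable homomorphisms $B_1\to B_2\to B_3$ one has $\widetilde{\psi\circ\varphi}=\widetilde{\varphi}\circ\widetilde{\psi}$, and $\widetilde{\operatorname{id}_B}=\operatorname{id}_{\mathcal{U}(B)}$. If $\varphi\colon B_1\to B_2$ is an isomorphism with inverse $\varphi^{-1}$, then $\widetilde{\varphi^{-1}}\colon\mathcal{U}(B_1)\to\mathcal{U}(B_2)$ is continuous by the first part, and
$$\widetilde{\varphi}\circ\widetilde{\varphi^{-1}}=\widetilde{\varphi^{-1}\circ\varphi}=\operatorname{id}_{\mathcal{U}(B_1)},\qquad \widetilde{\varphi^{-1}}\circ\widetilde{\varphi}=\widetilde{\varphi\circ\varphi^{-1}}=\operatorname{id}_{\mathcal{U}(B_2)},$$
so $\widetilde{\varphi}$ is a homeomorphism with inverse $\widetilde{\varphi^{-1}}$. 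I do not expect a real obstacle here: the only substantive step is the well-definedness of $\widetilde{\varphi}$, and Lemma \ref{HomoAndUlta} reduces it to the triviality that a composite of homomorphisms is a homomorphism; everything else is bookkeeping with the basis $\{U_x\}$.
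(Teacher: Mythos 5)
Your proposal is correct and follows essentially the same route as the paper: pull back ultrafilters along $\varphi$, observe $\widetilde{\varphi}^{-1}(U_x)=U_{\varphi(x)}$ for continuity, and conclude the homeomorphism in the isomorphism case. You actually supply two details the paper leaves as asserted or ``straightforward'' --- the well-definedness of $\varphi^{-1}(F)$ via Lemma \ref{HomoAndUlta} and the functoriality identities giving the inverse map --- so your write-up is, if anything, more complete.
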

\begin{proof}
First notice that if $F$ is an ultrafilter of $B_2$, then $\varphi^{-1}(F)$ is an ultrafilter of $B_1$. Consider the map $\widetilde{\varphi}:\mathcal{U}(B_2)\to \mathcal{U}(B_1)$ that sends $F\in \mathcal{U}(B_2)$ to $\varphi^{-1}(F)$. Now, if $U_x$ is a clopen subset of $\mathcal{U}(B_1)$, then $\widetilde{\varphi}^{-1}(U_x)=U_{\varphi(x)}$ as $F\in \widetilde{\varphi}^{-1}(U_x)$ if and only if $\widetilde{\varphi}(F)\in (U_x)$ if and only if $\varphi^{-1}(F)\in (U_x)$ if and only if $\varphi(x)\in F$, an hence $\widetilde{\varphi}:\mathcal{U}(B_2)\to \mathcal{U}(B_1)$ is continuous. It is straightforward to see that $\widetilde{\varphi}:\mathcal{U}(B_2)\to \mathcal{U}(B_2)$  is a homeomorphism when $\varphi:B_1\to B_2$ is an isomorphism.
\end{proof}
\subsection{Stone's space of a Boolean algebra $B$ as the family of all homomorphisms from $B$ to $\mathbb{Z}_2$} The aim of this section is to equip $Hom(B,\mathbb{Z}_2)$ with a topology such that $Hom(B,\mathbb{Z}_2)$ is a Stone's space homeomorphic to $(\mathcal{U}(B),\mathcal{T}_{\mathcal{U}(B)})$. That goal is achieved by embedding $Hom(B,\mathbb{Z}_2)$ in a Stone's space where the image of $Hom(B,\mathbb{Z}_2)$ under that embedding is a closed subset. 
\begin{Lemma}\label{ProductOTheTwoElementIsStone}
Let $X$ be a non-empty set.\\
($a$) The family $2^X:=\{f:X\to \mathbb{Z}_2:$ $f$ is a function $\}$ admits a topology $\mathcal{T}_{2^X}$ such that $(2^X,\mathcal{T}_{2^X})$ is homeomorphic to $\prod\limits _{x \in X}\mathbb{Z}_2$ equipped with the product topology considering $\mathbb{Z}_2$ as a discrete space.\\ 
($b$) The subsets $0_a:=\{f\in 2^X: f(a)=0\}$ and $1_a:=\{f\in 2^X: f(a)=1\}$ are clopen subsets of $(2^X,\mathcal{T}_{2^X})$ for all $a\in X$ and that the family $\mathcal{S}:=\{0_a,1_a:a\in X\}$ is a subbasis for the topology $\mathcal{T}_{2^X}$ on $2^X$.\\
($c$) $2^X$ is a Stone's space.
\end{Lemma}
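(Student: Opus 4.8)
The plan is to reduce all three parts to the standard theory of product topologies, using Tychonoff's theorem for compactness and the zero-dimensionality criterion \ref{CharOfTotallyDisconnectedSpaces} for total disconnectedness.

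For part ($a$), I would observe that, by the very definition of the Cartesian product, an element of $\prod_{x\in X}\mathbb{Z}_2$ \emph{is} precisely a function $X\to\mathbb{Z}_2$, so the identity assignment is a bijection $2^X\to\prod_{x\in X}\mathbb{Z}_2$. One then \emph{defines} $\mathcal{T}_{2^X}$ to be the topology transported from the product topology along this bijection; the required homeomorphism is then tautological. For part ($b$), recall that the product topology on $\prod_{x\in X}\mathbb{Z}_2$ has as a subbasis the sets $\pi_a^{-1}(V)$, where $\pi_a$ denotes projection onto the $a$-th coordinate and $V\subset\mathbb{Z}_2$ is open. Since $\mathbb{Z}_2$ carries the discrete topology, its open subsets are $\emptyset,\{0\},\{1\},\mathbb{Z}_2$; under the identification from part ($a$) one has $\pi_a^{-1}(\{0\})=0_a$ and $\pi_a^{-1}(\{1\})=1_a$, while $\pi_a^{-1}(\emptyset)=\emptyset$ and $\pi_a^{-1}(\mathbb{Z}_2)=2^X$ contribute nothing new. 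Hence $\mathcal{S}=\{0_a,1_a:a\in X\}$ is a subbasis for $\mathcal{T}_{2^X}$. Each $0_a$ is open as a subbasic open set and closed because $0_a=2^X\setminus 1_a$; likewise for $1_a$. So both are clopen.

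For part ($c$), $\mathbb{Z}_2$ with the discrete topology is finite, hence compact, and Hausdorff; therefore, by Tychonoff's theorem, $\prod_{x\in X}\mathbb{Z}_2$ is compact, and a product of Hausdorff spaces is Hausdorff, so $2^X$ is compact Hausdorff. It remains to check total disconnectedness. By part ($b$), $2^X$ has a subbasis consisting of clopen sets; since a finite intersection of clopen sets is clopen, the basis generated by $\mathcal{S}$ consists of clopen sets, i.e. $2^X$ is zero-dimensional. Being compact Hausdorff, $2^X$ is in particular locally compact Hausdorff, so \ref{CharOfTotallyDisconnectedSpaces} (or directly \ref{Zero-dimensionalIsAlwysTotallyDisconnected}) yields that $2^X$ is totally disconnected, and thus a Stone's space.

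The only substantive input beyond routine bookkeeping is Tychonoff's theorem; if one prefers to keep the exposition self-contained, compactness of $2^X$ can instead be proved by hand via the finite intersection property, mirroring the compactness argument already carried out for $\mathcal{U}(B)$ in \ref{SpaceOfUltraOfBoolean}. That is the step I would expect to require the most care, although in the present write-up it may simply be cited.
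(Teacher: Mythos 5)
Your proposal is correct and follows essentially the same route as the paper: transporting the product topology along the tautological bijection with $\prod_{x\in X}\mathbb{Z}_2$, identifying $0_a$ and $1_a$ as pullbacks of subbasic sets (clopen since they are mutual complements), and then invoking Tychonoff's theorem together with \ref{CharOfTotallyDisconnectedSpaces} for total disconnectedness. No gaps.
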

\begin{proof}
($a$) Define the map $\varphi:2^X\to  \mathbb{Z}_2$ that sends each $f\in 2^X$ to $(f(x))_{x\in X}\in \prod\limits _{x \in X}\mathbb{Z}_2$. Since $\varphi:2^X\to  \mathbb{Z}_2$ is a bijection, the family $\mathcal{T}_{2^X}:=\{\varphi^{-1}(U):U\subset \prod\limits _{x \in X}\mathbb{Z}_2$ is open$\}$ is a topology on $2^X$ such that $(2^X,\mathcal{T}_{2^X})$ is homeomorphic to $\prod\limits _{x \in X}\mathbb{Z}_2$.\\
($b$) Notice that for $a\in X$, a subset of $\prod\limits _{x \in X}\mathbb{Z}_2$ that has the form $\prod\limits _{x \in X}U_x$, where $U_x=\mathbb{Z}_2$ for all $x\neq a$ and $U_x=\{0\}$ or $U_x=\{1\}$, if $x=a$, is a subbasis element and the family of all such subsets consists a subbasis for $\prod\limits _{x \in X}\mathbb{Z}_2$. If $U_a=\{0\}$, then $\varphi^{-1}(\prod\limits _{x \in X}U_x)=0_a$; and if $U_a=\{1\}$, then $\varphi^{-1}(\prod\limits _{x \in X}U_x)=1_a$. Hence, the family $\mathcal{S}:=\{0_a,1_a:a\in X\}$ is a subbasis. Moreover, as $2^X=0_a\cup 1_a$ and $0_a\cap 1_a=\emptyset$ for all $a\in X$, the subsets $0_a:=\{f\in 2^X: f(a)=0\}$ and $1_a:=\{f\in 2^X: f(a)=1\}$ are clopen.\\
($c$) $2^X$ is Hausdorff as $\mathbb{Z}_2$ is Hausdorff. By Tychonoff's Theorem, $2^X$ is  compact. From part($b$), $(2^X,\mathcal{T}_{2^X})$ has a basis consists of clopen subsets. By \ref{CharOfTotallyDisconnectedSpaces}, $2^X$ is a Stone's space.
\end{proof}
\begin{Theorem}\label{HomIsAstone}
Let $Hom(B,\mathbb{Z}_2):=\{f:B\to \mathbb{Z}_2:$ $f$ is a homomorphism$\}$ where $(B,+_{{\textstyle\mathstrut}B},\cdot,\neg)$ is a Boolean algebra. Then  $Hom(B,\mathbb{Z}_2)$ is a closed subset of the Stone's space $2^B$. In particular, $Hom(B,\mathbb{Z}_2)$ is a Stone's space.
\end{Theorem}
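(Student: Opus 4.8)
The plan is to realize $Hom(B,\mathbb{Z}_2)$ as a closed subset of the Stone's space $2^B$ supplied by \ref{ProductOTheTwoElementIsStone}, and then to use the elementary fact that a closed subspace of a Stone's space is again a Stone's space. Indeed, a subspace of a Hausdorff space is Hausdorff, a closed subspace of a compact space is compact, and any subspace of a totally disconnected space is totally disconnected (since a connected subset of the subspace is connected in the ambient space); alternatively, a closed subspace of a compact zero-dimensional space has a basis of clopen sets, so \ref{CharOfTotallyDisconnectedSpaces} applies. Thus the whole task reduces to checking that $Hom(B,\mathbb{Z}_2)$ is closed in $2^B$.

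To do this I would cut $Hom(B,\mathbb{Z}_2)$ out coordinate by coordinate. For each $z\in B$ the evaluation $e_z\colon 2^B\to\mathbb{Z}_2$, $f\mapsto f(z)$, is continuous, being the composite of the homeomorphism $2^B\cong\prod_{x\in B}\mathbb{Z}_2$ of \ref{ProductOTheTwoElementIsStone} with the $z$-th coordinate projection. Given $x,y\in B$, the map $(e_x,e_y,e_{x+y})\colon 2^B\to\mathbb{Z}_2^{3}$ is continuous, and since $\mathbb{Z}_2^{3}$ is finite and discrete the subset $\{(a,b,c):c=a+b\}$ is clopen; hence its preimage $A^{+}_{x,y}:=\{f\in 2^B:f(x+y)=f(x)+f(y)\}$ is clopen in $2^B$. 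The same reasoning shows $A^{\cdot}_{x,y}:=\{f\in 2^B:f(x\cdot y)=f(x)\cdot f(y)\}$ is clopen, and $A_0:=e_{0_B}^{-1}\{0\}$, $A_1:=e_{1_B}^{-1}\{1\}$ are clopen (they are the subbasic sets $0_{0_B}$ and $1_{1_B}$ of \ref{ProductOTheTwoElementIsStone}).

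By the very definition of a homomorphism $B\to\mathbb{Z}_2$ (noting that $\varphi(\neg x)=\neg\varphi(x)$ is then automatic),
\[ Hom(B,\mathbb{Z}_2)=A_0\cap A_1\cap\bigcap_{x,y\in B}\bigl(A^{+}_{x,y}\cap A^{\cdot}_{x,y}\bigr), \]
which is an intersection of closed subsets of $2^B$, hence closed. Therefore $Hom(B,\mathbb{Z}_2)$ is a closed subspace of the Stone's space $2^B$ and is itself a Stone's space. I expect no genuine obstacle here; the only point needing care is the observation that the product topology on $2^B$ makes each evaluation continuous and that the discreteness of $\mathbb{Z}_2$ upgrades each finitely-constrained condition from merely closed to clopen, which is precisely what makes the argument go through.
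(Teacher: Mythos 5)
Your proposal is correct and follows essentially the same route as the paper: both express $Hom(B,\mathbb{Z}_2)$ as the intersection $A_0\cap A_1\cap\bigcap_{x,y}(A^{+}_{x,y}\cap A^{\cdot}_{x,y})$ of constraint sets and verify each is closed in $2^B$, then invoke that a closed subspace of a Stone's space is a Stone's space. The only (cosmetic) difference is that you obtain clopenness of each constraint set as the preimage of the graph of the operation under the continuous map $(e_x,e_y,e_{x+y})$ into the finite discrete space $\mathbb{Z}_2^{3}$, whereas the paper writes each constraint set explicitly as a union of four intersections of clopen sets corresponding to the rows of the truth table.
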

\begin{proof}
Let $a\in B$, and consider the projection map $\pi_a:\prod\limits _{x \in X}\mathbb{Z}_2\to \mathbb{Z}_2$ and the map $\varphi:2^X\to  \mathbb{Z}_2$ defined in the proof of \ref{ProductOTheTwoElementIsStone}. Since $\varphi\circ \pi_a:2^B\to \mathbb{Z}_2$ is continuous, then $(\varphi\circ \pi_a)^{-1}\{0\}=\{f\in 2^B:f(a)=0\}$ and  $(\varphi\circ\pi_a)^{-1}\{1\}=\{f\in 2^B:f(a)=1\}$ are clopen for all $a\in B$. For all $a,b\in B$, put, $A_{0}:=\{f\in 2^B:f(0_B)=0\}$, $A_{1}:=\{f\in 2^B:f(1_B)=1\}$, $A_{a+b}:=\{f\in 2^B:f(a+_{{\textstyle\mathstrut}B}b)=f(a)+f(b)\}$, and $A_{a\cdot b}:=\{f\in 2^B:f(a\cdot_{{\textstyle\mathstrut}B} b)=f(a)\cdot f(b)\}$. Notice that $A_{0}$ and $A_{1}$ are closed subsets by \ref{ProductOTheTwoElementIsStone}($b$). Moreover:\\
$\bullet$ $A_{a+b}$ is the union of the closed sets: $$\{f\in 2^B:f(a+_{{\textstyle\mathstrut}B}b)=0\}\cap \{f\in 2^B:f(a)=0\}\cap \{f\in 2^B:f(b)=0\},$$ $$\{f\in 2^B:f(a+_{{\textstyle\mathstrut}B}b)=1\}\cap \{f\in 2^B:f(a)=1\}\cap \{f\in 2^B:f(b)=0\},$$ $$\{f\in 2^B:f(a+_{{\textstyle\mathstrut}B}b)=1\}\cap \{f\in 2^B:f(a)=0\}\cap \{f\in 2^B:f(b)=1\},$$  $$\{f\in 2^B:f(a+_{{\textstyle\mathstrut}B}b)=1\}\cap \{f\in 2^B:f(a)=1\}\cap \{f\in 2^B:f(b)=1\}.$$\\
$\bullet$ $A_{a\cdot b}$ is the union of the closed sets:
$$\{f\in 2^B:f(a\cdot_{{\textstyle\mathstrut}B} b)=0\}\cap \{f\in 2^B:f(a)=0\}\cap \{f\in 2^B:f(b)=0\},$$ $$\{f\in 2^B:f(a\cdot_{{\textstyle\mathstrut}B} b)=0\}\cap \{f\in 2^B:f(a)=1\}\cap \{f\in 2^B:f(b)=0\},$$ $$\{f\in 2^B:f(a\cdot_{{\textstyle\mathstrut}B} b)=0\}\cap \{f\in 2^B:f(a)=0\}\cap \{f\in 2^B:f(b)=1\},$$ $$\{f\in 2^B:f(a\cdot_{{\textstyle\mathstrut}B} b)=1\}\cap \{f\in 2^B:f(a)=1\}\cap \{f\in 2^B:f(b)=1\}.$$\\
Therefore, $Hom(B,\mathbb{Z}_2)=A_0\cap A_1\cap(\bigcap\limits_{a,b\in B}A_{a+b})\cap(\bigcap\limits_{a,b\in B}A_{a\cdot b})$ is a closed subset of the Stone's space $2^B$ and hence it is a Stone's space.
\end{proof}
\begin{Theorem}\label{StoneAsSetOfHom}
Let $(B,+_{{\textstyle\mathstrut}B},\cdot,\neg)$ be Boolean algebra. The Stone's space $Hom(B,\mathbb{Z}_2)$ is homeomorphic to $\mathcal{U}(B)$. 
\end{Theorem}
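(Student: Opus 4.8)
The natural strategy is to exhibit an explicit bijection between $\mathcal{U}(B)$ and $Hom(B,\mathbb{Z}_2)$ and then check it is a homeomorphism. Lemma \ref{HomoAndUlta} already gives the two directions of the correspondence: to an ultrafilter $F$ it assigns the homomorphism $\varphi_F$ with $F=\varphi_F^{-1}\{1\}$, and to a homomorphism $\psi:B\to\mathbb{Z}_2$ it assigns the ultrafilter $\psi^{-1}\{1\}$. So first I would define $\Phi:\mathcal{U}(B)\to Hom(B,\mathbb{Z}_2)$ by $\Phi(F)=\varphi_F$ and $\Psi:Hom(B,\mathbb{Z}_2)\to\mathcal{U}(B)$ by $\Psi(\psi)=\psi^{-1}\{1\}$, and verify $\Phi$ and $\Psi$ are mutually inverse. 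One direction is immediate: $\Psi(\Phi(F))=\varphi_F^{-1}\{1\}=F$ by construction. For the other, given $\psi$, let $F=\psi^{-1}\{1\}$; then $\varphi_F(x)=1$ iff $x\in F$ iff $\psi(x)=1$, so $\varphi_F=\psi$, hence $\Phi(\Psi(\psi))=\psi$. Thus $\Phi$ is a bijection.

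Next I would check continuity of $\Phi$ and $\Psi$. It suffices to test each on the subbasic (indeed basic) clopen sets of the two topologies: on $\mathcal{U}(B)$ the basic clopens are the $U_x=\{F:x\in F\}$ (by \ref{PropertiesOfBasisElements} and \ref{ClopenSetsAreBasisElements}), while on $Hom(B,\mathbb{Z}_2)$, which carries the subspace topology from $2^B$, the subbasic clopens are $1_x\cap Hom(B,\mathbb{Z}_2)=\{\psi:\psi(x)=1\}$ and $0_x\cap Hom(B,\mathbb{Z}_2)=\{\psi:\psi(x)=0\}$ for $x\in B$ (by \ref{ProductOTheTwoElementIsStone}(b)). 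The computation is then a one-liner in each direction: $\Phi^{-1}(\{\psi:\psi(x)=1\})=\{F:\varphi_F(x)=1\}=\{F:x\in F\}=U_x$, which is open, and likewise $\Phi^{-1}(\{\psi:\psi(x)=0\})=\mathcal{U}(B)\setminus U_x=U_{\neg x}$; symmetrically $\Psi^{-1}(U_x)=\{\psi:\psi(x)=1\}$, which is open in $Hom(B,\mathbb{Z}_2)$. So both maps are continuous, and being mutually inverse, $\Phi$ is a homeomorphism.

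Alternatively — and perhaps more in keeping with the paper's style — one could invoke compactness: $\mathcal{U}(B)$ is compact (\ref{SpaceOfUltraOfBoolean}), $Hom(B,\mathbb{Z}_2)$ is Hausdorff (\ref{HomIsAstone}), and a continuous bijection from a compact space to a Hausdorff space is automatically a homeomorphism, so it would be enough to produce the bijection and check continuity of $\Phi$ alone. I would mention this as the closing remark but carry out the direct two-sided argument since it is no longer.

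I do not expect a genuine obstacle here: all the substantive content is already packaged in \ref{HomoAndUlta} (the bijection) and in \ref{PropertiesOfBasisElements}, \ref{ClopenSetsAreBasisElements}, \ref{ProductOTheTwoElementIsStone} (the descriptions of the two topologies). The only place to be a little careful is to use the correct subbasis for the subspace topology on $Hom(B,\mathbb{Z}_2)\subset 2^B$ — i.e. intersecting the $0_a,1_a$ with $Hom(B,\mathbb{Z}_2)$ — and to note that testing on a subbasis suffices for continuity. Everything else is bookkeeping.
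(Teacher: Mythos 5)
Your proposal is correct and follows essentially the same route as the paper: the bijection packaged in \ref{HomoAndUlta}, continuity verified on the basic/subbasic clopen sets coming from \ref{PropertiesOfBasisElements} and \ref{ProductOTheTwoElementIsStone}($b$). The paper simply takes the shortcut you mention in your closing remark---it checks continuity of one direction only and invokes the fact that a continuous bijection between compact Hausdorff spaces is a homeomorphism---so your two-sided verification is just a slightly longer but equally valid version of the same argument.
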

\begin{proof}
By \ref{HomoAndUlta}, every $f\in Hom(B,\mathbb{Z}_2)$ determines an ultrafilter $F_f=f^{-1}(\{1\})$, and for every ultrafilter $F\in \mathcal{U}(B)$ there is a homomorphism $f_F\in Hom(B,\mathbb{Z}_2)$ such that $f^{-1}_{F}(\{1\})=F$. Define $\varphi:Hom(B,\mathbb{Z}_2)\to \mathcal{U}(B)$ by $\varphi(f):=F_f$ for all $f\in Hom(B,\mathbb{Z}_2)$. Given a basis element $U_x=\{F\in \mathcal{U}(B):x\in F\}$, where $x\in B$, then:
\begin{equation*} 
\begin{split}
\varphi^{-1}(U_x) & = \{f\in Hom(B,\mathbb{Z}_2):F_f\in U_x\} \\
 & = \{f\in Hom(B,\mathbb{Z}_2):x\in F_f=f^{-1}(\{1\})\} \\
 & = \{f\in Hom(B,\mathbb{Z}_2):f(x)=1\}
\end{split}
\end{equation*}
which is a clopen subset of $Hom(B,\mathbb{Z}_2)$, because $\{f\in Hom(B,\mathbb{Z}_2):f(x)=1\}\\=Hom(B,\mathbb{Z}_2)\cap\{f\in 2^B:f(x)=1\}$ and $\{f\in 2^B:f(x)=1\}$ is a clopen subset of $2^B$ as seen in \ref{HomIsAstone}. Therefore, $\varphi:Hom(B,\mathbb{Z}_2)\to \mathcal{U}(B)$ is a bijective continuous map between compact Hausdorff spaces and hence it is a homeomorphism as desired.
\end{proof}
\section{The Stone's representation theorems}
Every Boolean algebra (up to isomorphism) is the clopen Boolean algebra associated with some Stone's space; and conversely, every Stone's space (up to homeomorphism) is the space of all ultrafilters of some Boolean algebra. These tow facts are the core of the Stone's representation theorems. 
\begin{Theorem}\textbf{(Stone's representation theorem for Boolean algebras)}\label{StoneRepThOfBooleanAlg}
Up to isomorphism, every Boolean algebra is the clopen Boolean algebra for some Stone's space. More specifically, if $B$ is a Boolean algebra, then it is isomorphic to the Boolean algebra $Clop(\mathcal{U}(B))$.
\end{Theorem}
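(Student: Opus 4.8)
The plan is to exhibit the natural map $\Phi\colon B\to Clop(\mathcal{U}(B))$, $\Phi(x)=U_x=\{F\in\mathcal{U}(B):x\in F\}$, and show it is a Boolean algebra isomorphism. So the work splits into: (i) $\Phi$ is well defined with values in $Clop(\mathcal{U}(B))$; (ii) $\Phi$ is a homomorphism; (iii) $\Phi$ is injective; (iv) $\Phi$ is surjective. Almost all of the substance has already been packaged into \ref{PropertiesOfBasisElements}, \ref{SpaceOfUltraOfBoolean} and \ref{ClopenSetsAreBasisElements}, so the proof is largely an assembly job.

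For (i) and (ii) I would just cite \ref{PropertiesOfBasisElements}: each $U_x$ is clopen in $(\mathcal{U}(B),\mathcal{T}_{\mathcal{U}(B)})$ by part (e), so $\Phi$ takes values in $Clop(\mathcal{U}(B))$; then $\Phi(x\cdot y)=U_{x\cdot y}=U_x\cap U_y=\Phi(x)\cdot\Phi(y)$ by part (a), $\Phi(x+y)=U_{x+y}=U_x\cup U_y=\Phi(x)+\Phi(y)$ by part (c), $\Phi(0)=U_0=\emptyset$ (part (b) with $n=1$), and $\Phi(1)=U_1=\mathcal{U}(B)$ since $1$ lies in every proper filter; preservation of $\neg$ is then automatic, or one reads it off part (d).

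For (iii), the key observation is that $U_x\subseteq U_y$ if and only if $x\leqslant y$. The forward-cheap direction: ultrafilters are upward closed, so $x\leqslant y$ forces $U_x\subseteq U_y$. Conversely, if $x\not\leqslant y$, then $x\cdot\neg y\neq 0$ — for if $x\cdot\neg y=0$ then $x=x\cdot 1=x\cdot(y+\neg y)=x\cdot y+x\cdot\neg y=x\cdot y$, and the absorption law gives $x+y=x\cdot y+y=y$, i.e. $x\leqslant y$, a contradiction — so by \ref{PropertiesOfBasisElements}(b) there is an ultrafilter $F$ with $x\cdot\neg y\in F$, hence $x\in F$ and $\neg y\in F$, so $F\in U_x\setminus U_y$. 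Now if $x\neq y$, then by antisymmetry of $\leqslant$ at least one of $x\leqslant y$, $y\leqslant x$ fails, so $U_x\neq U_y$; thus $\Phi$ is injective. Finally, (iv) is exactly \ref{ClopenSetsAreBasisElements}: every clopen subset $C$ of $\mathcal{U}(B)$ has the form $U_x$ for some $x\in B$, i.e. $C=\Phi(x)$.

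I do not expect a real obstacle: the heavy lifting (existence of ultrafilters through any nonzero element, which underlies both ``$U_z=\emptyset\iff z=0$'' and the whole topological setup) was done in \ref{SetsSFPPContainedInProperFilter}, \ref{ProperFilterIsContainedInMaximal}, \ref{UltrIsMax}, \ref{SpaceOfUltraOfBoolean}. The only step requiring a genuine, if short, argument of its own is the injectivity, and within it the one-line computation turning $x\cdot\neg y=0$ into $x\leqslant y$ via absorption; everything else is citation and bookkeeping.
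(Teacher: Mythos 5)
Your proposal is correct and follows essentially the same route as the paper: the same map $x\mapsto U_x$, the homomorphism property read off \ref{PropertiesOfBasisElements}, and surjectivity exactly via \ref{ClopenSetsAreBasisElements}. The only (cosmetic) difference is in injectivity: the paper extends the principal filter $F_x$ to a maximal filter avoiding $y$ via \ref{ProperFilterIsContainedInMaximal}, whereas you produce an ultrafilter through the nonzero element $x\cdot\neg y$ using \ref{PropertiesOfBasisElements}($b$) — both amount to the same ultrafilter-existence fact, and your absorption computation showing $x\cdot\neg y=0\Rightarrow x\leqslant y$ is right.
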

\begin{proof}
Let $(B,+,\cdot,\neg )$ be a Boolean algebra; consider the map $\varphi:B\to Clop(\mathcal{U}(B))$ that sends each $x\in B$ to $U_x\in Clop(\mathcal{U}(B))$. It is straightforward to see that $\varphi:B\to Clop(\mathcal{U}(B))$ is a homomorphism. It is, moreover, surjective by \ref{ClopenSetsAreBasisElements}. Now, let $x,y\in B$ be distinct elements, then either $x\nleqslant y$ or $y\nleqslant x$. Without loss of generality, assume that $x\nleqslant y$, then $y\notin F_x$, where $F_x$ is the principal filter generated by $x$. Let $M_x$ be a maximal filter containing  $F_x$ such that $y\notin M_x$. Notice that $M_x\in U_x$ and $M_x\notin U_y$; hence $U_x\neq U_y$. Consequently, $\varphi:B\to Clop(\mathcal{U}(B))$ is an isomorphism as desired.
\end{proof}

\begin{Lemma}\label{UltrafiltersOfTheClopenBoolean}
Let $X$ be a Stone's space. Then:\\
($a$) For every $x\in X$, the family $F^x:=\{U\in Clop(X):x\in U\}$ is an ultrafilter of the clopen Boolean algebra $Clop(X)$.\\
($b$) If $F$ is an ultrafilter of $Clop(X)$, then $\bigcap\limits_{U\in F}U=\{x_F\}$ for some $x_F\in X$. 
\end{Lemma}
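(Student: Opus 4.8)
The plan is to verify the defining properties of an ultrafilter directly for part ($a$), and for part ($b$) to combine compactness of $X$ (to produce a point in the intersection) with the maximality of ultrafilters (to identify $F$ with a point-ultrafilter) and zero-dimensionality of Stone's spaces (to collapse the intersection to a single point).

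For ($a$), I would first note that $F^x$ is nonempty since $X\in Clop(X)$ and $x\in X$, and that it is proper since $\emptyset\notin F^x$. Closure under $\cdot=\cap$ is immediate: if $x\in U$ and $x\in V$ then $x\in U\cap V$, which is again clopen. Upward closure under $\leqslant$ (which on the sub-Boolean algebra $Clop(X)\subset\mathcal{P}(X)$ is just $\subseteq$) is equally immediate. Finally, for any $U\in Clop(X)$ exactly one of $x\in U$ and $x\in X\setminus U=\neg U$ holds, so exactly one of $U,\neg U$ lies in $F^x$; hence $F^x$ is an ultrafilter. None of these steps is difficult.

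For ($b$), let $F$ be an ultrafilter of $Clop(X)$. Since $F$ is a proper filter it is closed under finite products and does not contain $0=\emptyset$, so $F$, regarded as a family of closed subsets of $X$, satisfies the finite intersection property; as $X$ is compact, $\bigcap_{U\in F}U\neq\emptyset$, and I pick $x_F$ in this intersection. By construction $U\in F$ implies $x_F\in U$, i.e. $F\subseteq F^{x_F}$, and by part ($a$) together with \ref{UltrIsMax} the filter $F^{x_F}$ is proper while $F$ is maximal; hence $F=F^{x_F}$. It remains to check the intersection is a singleton: it contains $x_F$, and if some $y\neq x_F$ also lay in $\bigcap_{U\in F}U$ then, using that a Stone's space is zero-dimensional (the Corollary following \ref{CharOfTotallyDisconnectedSpaces}), I could pick a clopen $U$ with $x_F\in U$ and $y\notin U$; then $U\in F^{x_F}=F$ yet $y\notin U$, a contradiction. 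Thus $\bigcap_{U\in F}U=\{x_F\}$, and $x_F$ is evidently the unique such point.

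The only point requiring care is keeping track, in part ($b$), of exactly which hypotheses are doing the work: compactness via the finite intersection property to get a point of the intersection, maximality of ultrafilters (\ref{UltrIsMax}) to conclude $F=F^{x_F}$, and zero-dimensionality of Stone's spaces to separate $x_F$ from any other candidate point. I do not anticipate a genuine obstacle beyond assembling these ingredients in the right order.
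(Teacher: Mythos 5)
Your proof is correct and takes essentially the same approach as the paper: part ($a$) is the same direct verification, and part ($b$) uses compactness plus the finite intersection property to get a point of $\bigcap_{U\in F}U$ and a separating clopen set to rule out a second point. The only cosmetic difference is that in part ($b$) you first identify $F=F^{x_F}$ via maximality (\ref{UltrIsMax}) before deriving the contradiction, whereas the paper applies the ultrafilter dichotomy ($U\in F$ or $X\setminus U\in F$) directly to the separating clopen set; both arguments rest on the same ingredients.
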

\begin{proof}
($a$) Since $X$ is totally disconnected, $Clop(X)$ is a basis for $X$. Since $X$ is Hausdorff $F^x$ is a proper subset of $Clop(X)$. Evidently, $F^x$ is a filter; and if $V\in Clop(X)\setminus F^x$, then $U:=X\setminus V$ is a clopen subset containing $x$ and hence it is contained in $F^x$. Thus, $F^x$ is an ultrafilter.\\
($b$) Let $F$ be an ultrafilter of $Clop(X)$. Since $F$ is a family of closed sets of the compact space $X$ and that it satisfies the finite intersection property; hence $\bigcap\limits_{U\in F}U$ is non-empty. Seeking contradiction, let $x,y\in\bigcap\limits_{U\in F}U$ be distinct elements. Let $U_x$ be a clopen set containing $x$ such that $y\notin U_x$. Notice that $U_y:=X\setminus U_x$ is a clopen subset containing $y$. Since $F$ is an ultrafilter, either $U_x\in F$ or $U_y\in F$. Without loss of generality, assume that $U_x\in F$, then as $y\notin U_x$,  $y\notin\bigcap\limits_{U\in F}U$, a contradiction. 
\end{proof}
\begin{Theorem}\textbf{(Stone's representation theorem for Stone's spaces)}\label{StoneRepThOfStoneSpace}
Up to homeomorphism, every Stone's space is the ultrafilter space of some Boolean algebra. More specifically, if $X$ is a Stone's space then it is homeomorphic to $\mathcal{U}(Clop(X))$. 
\end{Theorem}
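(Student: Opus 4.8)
The plan is to show that the \emph{evaluation map}
\[
\Phi:X\longrightarrow \mathcal{U}(Clop(X)),\qquad \Phi(x):=F^x=\{U\in Clop(X):x\in U\},
\]
is a homeomorphism. It is well defined by \ref{UltrafiltersOfTheClopenBoolean}($a$), which says each $F^x$ is genuinely an ultrafilter of $Clop(X)$, and $\mathcal{U}(Clop(X))$ is a Stone's space by \ref{SpaceOfUltraOfBoolean}. So it remains to verify bijectivity and continuity, after which the statement follows from the standard fact that a continuous bijection from a compact space onto a Hausdorff space is a homeomorphism.

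First I would check injectivity: if $x\neq y$ in $X$, then since $X$ is a Stone's space it is zero-dimensional (the Corollary following \ref{CharOfTotallyDisconnectedSpaces}), so there is a clopen set $U$ with $x\in U$ and $y\notin U$; hence $U\in F^x\setminus F^y$ and $\Phi(x)\neq\Phi(y)$. Next, surjectivity: given an ultrafilter $F$ of $Clop(X)$, apply \ref{UltrafiltersOfTheClopenBoolean}($b$) to get $\bigcap_{U\in F}U=\{x_F\}$ for some $x_F\in X$. Since $x_F$ belongs to every member of $F$, we have $F\subseteq F^{x_F}$; and because $F$ is a maximal filter (\ref{UltrIsMax}) while $F^{x_F}$ is a proper filter containing it, equality must hold, so $\Phi(x_F)=F$. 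This surjectivity step — upgrading the inclusion $F\subseteq F^{x_F}$ to equality via maximality — is the one place where a little care is needed, but the two parts of \ref{UltrafiltersOfTheClopenBoolean} really do all the heavy lifting, so no serious obstacle is expected.

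For continuity, recall from \ref{PropertiesOfBasisElements} that the sets $U_V=\{F\in\mathcal{U}(Clop(X)):V\in F\}$, for $V\in Clop(X)$, form a (clopen) basis of $\mathcal{U}(Clop(X))$. For each such $V$,
\[
\Phi^{-1}(U_V)=\{x\in X:V\in F^x\}=\{x\in X:x\in V\}=V,
\]
which is clopen, hence open, in $X$; therefore $\Phi$ is continuous. (In fact the same computation gives $\Phi(V)=U_V$, so $\Phi$ maps the clopen basis of $X$ bijectively onto the clopen basis of $\mathcal{U}(Clop(X))$, exhibiting $\Phi$ as an open map directly.) Finally, $\Phi$ is a continuous bijection from the compact space $X$ onto the Hausdorff space $\mathcal{U}(Clop(X))$, so $\Phi$ is a homeomorphism, which is what we wanted to prove.
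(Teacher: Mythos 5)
Your proposal is correct and follows essentially the same route as the paper's proof: the same evaluation map $x\mapsto F^x$, injectivity via a separating clopen set, surjectivity via \ref{UltrafiltersOfTheClopenBoolean}($b$) plus maximality of ultrafilters, and continuity via the computation $\Phi^{-1}(U_V)=V$, concluding with the compact-to-Hausdorff continuous bijection argument. No gaps.
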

\begin{proof}
Let $X$ be a Stone's space. Exploiting \ref{UltrafiltersOfTheClopenBoolean}($a$), we define the map $\psi :X\to \mathcal{U}(Clop(X))$ that sends each $x\in X$ to $F^x$. To show that $\psi :X\to \mathcal{U}(Clop(X))$ is homeomorphism, it suffices to show that it is continuous bijective map as $X$ and $\mathcal{U}(Clop(X))$ are compact Hausdorff spaces.\\
$\bullet$ $\psi$ is injective. Indeed, let $x,y\in X$ be distinct elements. Since $X$ is Hausdorff and $Clop(X)$ is a basis for $X$, there exists $U_x\in Clop(X)$ containing $x$ such that $y\notin U_x$. Hence, $U_x\in F^x\setminus F^y$.\\
$\bullet$ $\psi$ is surjective. Indeed, let $F$ be an ultrafilter of $Clop(X)$, then $\bigcap\limits_{U\in F}U=\{x_F\}$, by \ref{UltrafiltersOfTheClopenBoolean}($b$). Now, by \ref{UltrafiltersOfTheClopenBoolean}($a$), $F^{x_F}$ is an ultrafilter and it is obvious that $F\subset F^{x_F}$. Since ultrafilters are maximal, $F= F^{x_F}$.\\
$\bullet$ $\psi$ is continuous. To this end, notice that a basis element of $\mathcal{U}(Clop(X))$ is of the form $U_{C}=\{F\in \mathcal{U}(Clop(X)):C\in F\}$ for some a clopen subset $C$ of $X$. Now, $\psi^{-1}(U_{C})=\{x\in X:F^x\in U_{C}\}=\{x\in X:C\in F^x\}=C$ and thus $\psi$ is continuous.
\end{proof}
\begin{Corollary}\label{IsoAndHomeo}
($a$) Let $B_1$ and $B_1$ be Boolean algebras; then $B_1$ and $B_1$ are isomorphic if and only if their Stone's spaces $\mathcal{U}(B_1)$ and $\mathcal{U}(B_2)$ are homeomorphic.\\
($b$) Let $X$ and $Y$ be Stone's spaces; then $X$ and $Y$ are homeomorphic if and only if $Clop(X)$ and $Clop(Y)$ are isomorphic.
\end{Corollary}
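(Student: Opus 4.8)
Both parts are immediate bookkeeping consequences of the functoriality statements \ref{ClopBoolean} and \ref{IsoBooleansHaveHomeoStonesSpaces} together with the two representation theorems \ref{StoneRepThOfBooleanAlg} and \ref{StoneRepThOfStoneSpace}; there is no real obstacle beyond keeping track of the direction of the arrows and invoking the fact that a composition of isomorphisms (resp. homeomorphisms) is again an isomorphism (resp. a homeomorphism).

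For part ($a$), the forward implication is exactly the second sentence of \ref{IsoBooleansHaveHomeoStonesSpaces}: an isomorphism $\varphi:B_1\to B_2$ induces a homeomorphism $\widetilde{\varphi}:\mathcal{U}(B_2)\to\mathcal{U}(B_1)$. For the converse, suppose $h:\mathcal{U}(B_1)\to\mathcal{U}(B_2)$ is a homeomorphism. By \ref{ClopBoolean}($b$) it induces an isomorphism $\widetilde{h}:Clop(\mathcal{U}(B_2))\to Clop(\mathcal{U}(B_1))$. By \ref{StoneRepThOfBooleanAlg} there are isomorphisms $B_1\cong Clop(\mathcal{U}(B_1))$ and $B_2\cong Clop(\mathcal{U}(B_2))$, and composing these three isomorphisms yields $B_1\cong B_2$.

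Part ($b$) is the mirror image. If $X$ and $Y$ are homeomorphic Stone's spaces, \ref{ClopBoolean}($b$) gives an isomorphism $Clop(Y)\cong Clop(X)$. Conversely, if $Clop(X)$ and $Clop(Y)$ are isomorphic, then \ref{IsoBooleansHaveHomeoStonesSpaces} produces a homeomorphism $\mathcal{U}(Clop(Y))\cong\mathcal{U}(Clop(X))$, and \ref{StoneRepThOfStoneSpace} supplies homeomorphisms $X\cong\mathcal{U}(Clop(X))$ and $Y\cong\mathcal{U}(Clop(Y))$; composing these gives $X\cong Y$. The only point worth a word of care is that in each converse we genuinely need the representation theorem to return from $Clop(\mathcal{U}(\,\cdot\,))$ (resp. $\mathcal{U}(Clop(\,\cdot\,))$) to the original object, so that the chain of isomorphisms/homeomorphisms actually closes up; everything else is formal.
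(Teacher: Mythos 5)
Your proof is correct and follows exactly the same route as the paper: the forward directions are \ref{IsoBooleansHaveHomeoStonesSpaces} and \ref{ClopBoolean}($b$) respectively, and each converse passes through the induced map on $Clop$ (resp.\ $\mathcal{U}$) and then closes the loop with the appropriate representation theorem \ref{StoneRepThOfBooleanAlg} or \ref{StoneRepThOfStoneSpace}. Your remark that the representation theorems are genuinely needed to return to the original objects is the right point of emphasis; nothing further is required.
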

\begin{proof}
($a$) Assume that the Stone's spaces $\mathcal{U}(B_1)$ and $\mathcal{U}(B_2)$ are homeomorphic. By \ref{ClopBoolean}, $Clop(\mathcal{U}(B_1))$ and $Clop(\mathcal{U}(B_2))$ are isomorphic. By \ref{StoneRepThOfBooleanAlg}, $B_1$ and $B_1$ are isomorphic. The other direction follows from \ref{IsoBooleansHaveHomeoStonesSpaces}. \\
($b$) If $X$ and $Y$ are homeomorphic, then by \ref{ClopBoolean}, $Clop(X)$ and $Clop(Y)$ are isomorphic. Conversely, assume that $Clop(X)$ and $Clop(Y)$ are isomorphic, then by \ref{IsoBooleansHaveHomeoStonesSpaces}, $\mathcal{U}(Clop(X))$ and $\mathcal{U}(Clop(Y))$ are homeomorphic. Therefore, by \ref{StoneRepThOfStoneSpace}, $X$ and $Y$ are homeomorphic. 
\end{proof}
\section{Stone's space of a Boolean algebra as the spectrum of its induced Boolean ring}
Alternatively the Stone's space $(\mathcal{U}(B),\mathcal{T}_{\mathcal{U}(B)})$ can be obtained as the spectrum of some Boolean ring induced from the Boolean algebra $B$.
\begin{Definition}
Let $R$ be a commutative ring with identity $1$; and $I\subset R$:\\
$\bullet$  $I$ is called an \textbf{ideal} if it is closed under addition and $R\cdot I=I$; that is, for all $a,b\in I$ and $r\in R$, then $r\cdot a+b\in I$.\\
$\bullet$  A proper ideal $I$ is called a \textbf{prime ideal} if for all $x,y \in R$ with $x\cdot y\in I$, then either $x\in I$ or $x\in I$.\\
$\bullet$ A proper ideal $I$ is called a \textbf{maximal ideal} if it is maximal with respect to the inclusion.\\
$\bullet$ The family of all prime ideals of $R$ is called the \textbf{spectrum} of $R$ and is denoted by $Spec(R)$.
\end{Definition}
\begin{Lemma}\label{IdealsProperties}
Let $R$ be a commutative ring with identity $1$. Then:\\
($a$) Let $A\subset R$, the collection $I_A:=\{r_{1}\cdot a_{1}+...+r_{n}\cdot a_{n}:n\in\mathbb{N},a_i\in A, r_i\in R\}$ of all finite linear combinations of elements of $A$
is an ideal called the  \textbf{ideal generated} by $A$.\\
($b$) Every proper ideal is contained in a maximal ideal.\\
($c$) Every maximal ideal is prime.
\end{Lemma}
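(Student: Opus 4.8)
The plan is to treat the three parts as standard facts of commutative algebra, proved in the order in which they are stated.

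For part (a) I would check directly that $I_A$ is closed under addition and under multiplication by arbitrary elements of $R$. Closure under addition is merely the concatenation of two finite linear combinations of elements of $A$; closure under multiplication by $r\in R$ uses distributivity to rewrite $r\bigl(\sum_i r_i a_i\bigr)=\sum_i (r r_i)a_i$, which is again a finite linear combination of elements of $A$. Taken together these give $r\cdot a+b\in I_A$ for all $a,b\in I_A$ and $r\in R$, so $I_A$ is an ideal (it is nonempty since it contains $0$). Finally, any ideal containing $A$ must contain every finite linear combination $\sum_i r_i a_i$ with $a_i\in A$, so $I_A$ is contained in every such ideal and is therefore the smallest one, which justifies calling it the ideal generated by $A$.

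For part (b) the tool is Zorn's lemma, in the same spirit as Lemma \ref{ProperFilterIsContainedInMaximal}. Given a proper ideal $I$, I would order by inclusion the family of all proper ideals of $R$ that contain $I$; this family is nonempty because $I$ itself belongs to it. For a chain in this family, the union of its members is again an ideal (the chain condition lets one verify closure under addition and absorption), and it is a \emph{proper} ideal because $1$ lies in none of the members, hence not in the union. Thus every chain has an upper bound in the family, and Zorn's lemma produces a maximal element, which is precisely a maximal ideal of $R$ containing $I$.

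For part (c), let $M$ be a maximal ideal, so in particular $M$ is proper, and suppose $x\cdot y\in M$ while $x\notin M$. Then the ideal generated by $M\cup\{x\}$, which by part (a) is $M+Rx$, strictly contains $M$; by maximality it must equal $R$, so $1=m+rx$ for some $m\in M$ and $r\in R$. Multiplying this relation by $y$ gives $y=my+r(xy)\in M$, since both $m$ and $xy$ lie in $M$. Hence $M$ is prime. None of these steps is genuinely difficult; the one point that deserves a moment of care is in part (b), where a union of a chain of \emph{proper} ideals must be seen to remain proper — which holds exactly because the identity $1$ is in none of them — while the substantive set-theoretic ingredient is the appeal to Zorn's lemma itself.
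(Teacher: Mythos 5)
Your proposal is correct and follows essentially the same route as the paper: part (a) by direct verification, part (b) by applying Zorn's Lemma to the family of proper ideals containing the given one, and part (c) by passing to the ideal generated by $M\cup\{x\}$, writing $1=m+r\cdot x$, and multiplying by $y$. The only difference is that you spell out the details (in particular, that the union of a chain of proper ideals is proper because it omits $1$) where the paper leaves them implicit.
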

\begin{proof}
($a$) Straightforward.\\
($b$) Let $I$ be a proper ideal of $R$. Apply the Zorn's Lemma to the family $\{J\subset R:J$ is a proper ideal containing $I\}$.\\
($c$) Let $M$ be a maximal ideal and $x,y\in R$ such that $x\cdot y\in M$. Assume that $x\notin M$, then $M$ is contained properly in the ideal $I_A$ generated by $A:=M\cup\{x\}$. By the maximality of $M$, $I_A=R$. Since $1\in I_A$, $1=a+r\cdot x$ for some $a\in M$. Now, $y=y\cdot 1=y\cdot(a+r\cdot x)=y\cdot a+y\cdot r\cdot x$. Since, $a,x\in M$, any linear combination of $a,x$ is in $M$ and hence $y\in M$. 
\end{proof}
Now we equip the spectrum of a ring with a topology called the \textbf{Zariski topology} that makes it a compact space. If, moreover, the ring is \textbf{Boolean}, then it spectrum space is a Stone's space. 
\begin{Lemma}\label{ZarBasis}
Let $R$ be a commutative ring with identity $1$. For $x\in R$, set $$V_x=\{P\in Spec(R):x\notin P\}.$$ 
($a$) For all $x,y\in R$, $V_{x\cdot y}=V_x\cap V_y$.\\
($b$) The family $\{V_x:x\in R\}$ is a basis for a topology $\mathcal{T}_{Spec(R)}$ on $Spec(R)$ called the \textbf{Zariski topology}.
\end{Lemma}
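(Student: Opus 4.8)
The plan is to establish (a) first by a direct membership argument from the defining properties of a prime ideal, and then obtain (b) as a formal consequence, since (a) exhibits the proposed family as being closed under pairwise intersection.

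For (a), I would fix $x,y\in R$ and an arbitrary prime ideal $P$, and show that $x\cdot y\notin P$ if and only if $x\notin P$ and $y\notin P$. For the forward direction, argue by contraposition using that $P$ is an ideal: if $x\in P$ then $x\cdot y=y\cdot x\in R\cdot P\subseteq P$, and symmetrically if $y\in P$ then $x\cdot y\in P$; so $x\cdot y\notin P$ forces $x\notin P$ and $y\notin P$. The converse is exactly the definition of a prime ideal written contrapositively: $x\notin P$ and $y\notin P$ imply $x\cdot y\notin P$. Rephrasing in terms of the sets $V_\bullet$, this says $P\in V_{x\cdot y}$ iff $P\in V_x$ and $P\in V_y$, i.e. $V_{x\cdot y}=V_x\cap V_y$.

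For (b), recall that a family $\mathcal{B}$ of subsets of a set $S$ is a basis for a topology on $S$ provided every point of $S$ lies in some member of $\mathcal{B}$, and whenever a point lies in the intersection of two members of $\mathcal{B}$ it lies in a member of $\mathcal{B}$ contained in that intersection. The first condition holds because $V_1=\{P\in Spec(R):1\notin P\}$ equals all of $Spec(R)$, every prime ideal being proper and hence omitting $1$; so a single member of the family already covers $Spec(R)$. The second condition is immediate from part (a): given $V_x$, $V_y$ and a prime ideal $P\in V_x\cap V_y$, we have $V_x\cap V_y=V_{x\cdot y}$, which is itself a member of the family containing $P$. Hence $\{V_x:x\in R\}$ is a basis, and the topology it generates is by definition the Zariski topology $\mathcal{T}_{Spec(R)}$.

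I do not expect a genuine obstacle. The only step needing a moment's care is the forward implication in (a), where one must use that $P$ is an \emph{ideal} (not just a subset) to pass from $x\in P$ to $x\cdot y\in P$, the reverse implication being precisely primality; once (a) is in place, (b) is purely formal because the family is closed under finite intersections and contains the whole space.
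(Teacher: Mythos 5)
Your proof is correct and follows essentially the same route as the paper: part (a) via the equivalence $x\cdot y\notin P \iff x\notin P \text{ and } y\notin P$ (ideal property for one direction, primality for the other), and part (b) from $V_1=Spec(R)$ together with closure under pairwise intersection. You simply spell out the details the paper leaves implicit.
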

\begin{proof}
($a$) Follows from the fact that for any prime ideal $P$ and $x,y\in R$, then $x\cdot y\notin P$ if and only if $x,y\notin P$.\\
($b$) Notice that an ideal $I$ is proper if and only if $1\notin I$ and hence $V_1=Spec(R)$. Now, by part ($a$), $\{V_x:x\in R\}$ is a basis. 
\end{proof}
\begin{Theorem}\label{SpecIsCompact}
Let $R$ be a commutative ring with identity $1$. The topological space $(Spec(R),\mathcal{T}_{Spec(R)})$ is compact.
\end{Theorem}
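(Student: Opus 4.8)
The plan is to verify compactness directly, reducing an arbitrary open cover to a cover by the basic open sets $V_x$ of \ref{ZarBasis}, and then translating the covering condition into the purely algebraic statement that the relevant elements generate the unit ideal.

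First I would record the key dictionary: for a subset $A\subset R$, the family $\{V_x:x\in A\}$ covers $Spec(R)$ if and only if no prime ideal of $R$ contains $A$. Indeed, by definition $P\notin V_x$ exactly when $x\in P$, so a prime $P$ is missed by $\{V_x:x\in A\}$ precisely when $A\subset P$. Combining this with \ref{IdealsProperties}, the family $\{V_x:x\in A\}$ covers $Spec(R)$ if and only if the ideal $I_A$ generated by $A$ equals $R$: if $I_A$ were proper, it would be contained in a maximal, hence prime, ideal (\ref{IdealsProperties}(b),(c)), and that ideal would contain $A$; conversely, if $I_A=R$ then $1\in I_A$, and since no proper ideal contains $1$, no prime ideal can contain $A$.

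Next, given a covering family $\{V_x:x\in A\}$, the condition $1\in I_A$ lets me write $1=r_1 a_1+\dots+r_n a_n$ with $a_i\in A$ and $r_i\in R$ (\ref{IdealsProperties}(a)). Then already $\{V_{a_1},\dots,V_{a_n}\}$ covers $Spec(R)$: a prime $P$ lying in none of these would contain every $a_i$, hence contain $1=\sum_{i} r_i a_i$, contradicting that $P$ is proper. So every cover of $Spec(R)$ by basic open sets admits a finite subcover. Finally, for an arbitrary open cover $\{O_\alpha\}$ I would write each $O_\alpha$ as a union of basic opens $V_x$, pass to the resulting finer cover by basic opens, extract a finite subcover $V_{a_1},\dots,V_{a_n}$ as above, and for each $i$ pick an index $\alpha_i$ with $V_{a_i}\subset O_{\alpha_i}$; then $O_{\alpha_1},\dots,O_{\alpha_n}$ cover $Spec(R)$.

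The only real content is the algebraic translation in the second paragraph — that $\{V_x:x\in A\}$ covers $Spec(R)$ exactly when $A$ generates the unit ideal — and that is precisely where \ref{IdealsProperties}(b),(c) are essential; the rest is the standard basis-refinement bookkeeping for compactness. Note that the Boolean hypothesis is not needed here: the argument works for any commutative ring with identity. (One could equivalently phrase everything via the finite intersection property of the closed sets $\{P\in Spec(R):x\in P\}$, but the open-cover formulation seems cleanest.)
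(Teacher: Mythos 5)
Your proposal is correct and follows essentially the same route as the paper: reduce to a cover by the basic opens $V_x$, pass to the ideal $I_A$ generated by the indexing set, use \ref{IdealsProperties}(b),(c) to force $I_A=R$, write $1=r_1a_1+\dots+r_na_n$, and conclude that $V_{a_1},\dots,V_{a_n}$ already cover. The only difference is that you spell out the final bookkeeping step of choosing $O_{\alpha_i}\supset V_{a_i}$, which the paper leaves implicit.
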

\begin{proof}
Let $\{O_\alpha:\alpha\in J\}$ be an open cover of $Spec(R)$. Since $\{V_x:x\in R\}$ is a basis for $Spec(R)$, there exists $A\subset R$ such that $\bigcup\limits_{\alpha\in J}O_\alpha=\bigcup\limits_{x\in A}V_x$. Let $I_A$ be the ideal generated by $A$. We claim that $I_A=R$. Seeking contradiction, assume that $I_A$ is a proper ideal. By \ref{IdealsProperties}, we can choose a maximal ideal $M$ containing $I_A$. Since $M$ is prime, $M\in Spec(R)=\bigcup\limits_{x\in A}V_x$, so $M\in V_x$ for some $x\in A$ which contradicts the fact that $A\subset I_A\subset M$. Now, since $1\in R=I_A$, $1=r_{1}\cdot a_{1}+...+r_{n}\cdot a_{n}$ for some $n\in\mathbb{N}$, $a_i\in A$ and $r_i\in R$. We claim that $Spec(R)=\bigcup\limits_{i=1}^n V_{a_i}$. Indeed, for $P\in Spec(R)$, since $1=r_{1}\cdot a_{1}+...+r_{n}\cdot a_{n}\notin P$, there exists $1\leq i\leq n$ such that $r_{i}\cdot a_{i}\notin P$ and therefore $P\in V_{a_i}$. 
\end{proof}
\begin{Definition}
Let $R$ be a ring with identity $1$ and $x\in R$:\\
$\bullet$ $x$ is called an \textbf{idempotent} element if $x^2=x$. \\
$\bullet$ $R$ is called a \textbf{Boolean} ring if every element of $R$ is an idempotent element. 
\end{Definition}
One can see that an element $x$ of a ring $R$ with identity $1$ is idempotent if and only if $x\cdot(1-x)=0$. In particular, for any prime ideal $P$ and idempotent element $x$, then as $x\cdot(1-x)=0\in P$, either $x\in P$ or $1-x\in P$, and not both (as if $x, 1-x\in P$ then $1=x+1-x\in P$). Moreover, a Boolean ring is automatically a commutative with characteristic 2, that is $2x=x+x=0$ and hence $x=-x$ for all $x\in R$. This follows from the observation:\\
$\bullet$ $2x=4x-2x=4x^2-2x=(2x)^2-2x=0$.\\
$\bullet$ $x\cdot y=x\cdot y+2(x+y)=x\cdot y+x+y+(x+y)^2=x\cdot y+x+y+x^2+y^2+x\cdot y+y\cdot x\\=2(x\cdot y+x+y)+y\cdot x=y\cdot x$.
\begin{Lemma}\label{ClopenBasis}
Let $R$ be a commutative ring with identity $1$, and $x\in R$ be idempotent element; then $V_x$ is a clopen subset of $(Spec(R),\mathcal{T}_{Spec(R)})$.
\end{Lemma}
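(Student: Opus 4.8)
The plan is to show that $V_x$ is closed by identifying its complement in $Spec(R)$ with another basic open set, namely $V_{1-x}$; since $V_x$ is already open by \ref{ZarBasis}(b), this yields that $V_x$ is clopen. The whole argument rests on the observation recorded just before the statement: because $x$ is idempotent, $x\cdot(1-x)=0$, so for every prime ideal $P$ the element $x\cdot(1-x)=0$ lies in $P$, forcing $x\in P$ or $1-x\in P$; and the two cannot both occur, since otherwise $1=x+(1-x)\in P$ would contradict the properness of $P$. Hence for each $P\in Spec(R)$ exactly one of $x$ and $1-x$ belongs to $P$.

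From this dichotomy I would deduce the two set identities $V_x\cap V_{1-x}=\emptyset$ and $V_x\cup V_{1-x}=Spec(R)$. For the first, \ref{ZarBasis}(a) gives $V_x\cap V_{1-x}=V_{x\cdot(1-x)}=V_0$, and $V_0=\{P\in Spec(R):0\notin P\}=\emptyset$ because $0$ lies in every ideal (if $Spec(R)$ is empty the lemma is trivial). The second identity is immediate from the dichotomy: given $P\in Spec(R)$, either $x\notin P$, so $P\in V_x$, or $x\in P$, in which case $1-x\notin P$ and $P\in V_{1-x}$.

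Therefore $Spec(R)\setminus V_x=V_{1-x}$, which is open, being a basic open set for the Zariski topology by \ref{ZarBasis}(b) (we do not even need $1-x$ to be idempotent for this, although it is, since $(1-x)^2=1-2x+x^2=1-x$). Consequently $V_x$ is closed as well as open, i.e. clopen. I do not anticipate any genuine obstacle: the only real content is the computation $x\cdot(1-x)=0$ together with its consequence for prime ideals, both already available in the text, and the remainder is formal manipulation of the basic opens $V_x$.
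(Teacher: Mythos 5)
Your argument is correct and is exactly the paper's approach: the paper's proof is the single line $Spec(R)\setminus V_x=V_{1-x}$, relying on the remark preceding the lemma that for an idempotent $x$ every prime ideal contains exactly one of $x$ and $1-x$. You have simply spelled out the details via \ref{ZarBasis}, which is fine.
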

\begin{proof}
Notice $Spec(R)\setminus V_x=V_{1-x}$.
\end{proof}
\begin{Theorem}
Let $R$ be a Boolean ring; then $(Spec(R),\mathcal{T}_{Spec(R)})$ is a Stone's space.
\end{Theorem}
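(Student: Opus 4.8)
The plan is to verify the two defining properties of a Stone's space — compactness and total disconnectedness — for $(Spec(R),\mathcal{T}_{Spec(R)})$, exploiting that in a Boolean ring every element is idempotent. Compactness is already in hand from \ref{SpecIsCompact}, which holds for any commutative ring with identity, so the work concentrates on the Hausdorff property and on exhibiting a clopen basis.

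First I would establish that $Spec(R)$ is Hausdorff. Given distinct prime ideals $P,Q\in Spec(R)$, at least one is not contained in the other, so without loss of generality choose $x\in P\setminus Q$. Since $R$ is Boolean, $x$ is idempotent, hence $x\cdot(1-x)=0$, and therefore $V_x\cap V_{1-x}=V_{x\cdot(1-x)}=V_0=\emptyset$ by \ref{ZarBasis} (note $V_0=\emptyset$ because $0$ lies in every prime ideal). As $x\notin Q$ we have $Q\in V_x$, and as $x\in P$ we have $1-x\notin P$ (otherwise $1=x+(1-x)\in P$), so $P\in V_{1-x}$. Thus $V_x$ and $V_{1-x}$ are disjoint open sets separating $Q$ and $P$.

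Next I would show $Spec(R)$ is zero-dimensional. By \ref{ZarBasis} the family $\{V_x:x\in R\}$ is a basis for $\mathcal{T}_{Spec(R)}$; since every $x\in R$ is idempotent, each $V_x$ is clopen by \ref{ClopenBasis}. Hence $Spec(R)$ admits a basis consisting of clopen subsets.

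Finally, I would assemble the pieces: $Spec(R)$ is a Hausdorff (in particular $T_1$) zero-dimensional space, so by \ref{Zero-dimensionalIsAlwysTotallyDisconnected} it is totally disconnected; together with compactness from \ref{SpecIsCompact} this is precisely the definition of a Stone's space. I do not anticipate a genuine obstacle here — the only slightly non-formal step is the Hausdorff separation, where the idempotent identity $x\cdot(1-x)=0$ does all of the work, exactly as it did in the proof of \ref{ClopenBasis}.
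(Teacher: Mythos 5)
Your proposal is correct and follows essentially the same route as the paper: compactness from \ref{SpecIsCompact}, a clopen basis from \ref{ClopenBasis} using idempotency, and the Hausdorff separation of two distinct primes by the disjoint clopen sets $V_x$ and $V_{1-x}$ for $x$ in one prime but not the other. You merely spell out a few steps the paper leaves implicit (why $V_x\cap V_{1-x}=\emptyset$ and why $1-x\notin P$), which is fine.
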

\begin{proof}
By \ref{SpecIsCompact}, $(Spec(R),\mathcal{T}_{Spec(R)})$ is compact, and by \ref{ClopenBasis}, $(Spec(R),\mathcal{T}_{Spec(R)})$ admits a basis consists of clopen subsets. Hence, it suffices to show that $(Spec(R),\mathcal{T}_{Spec(R)})$ is Hausdorff to conclude that $(Spec(R),\mathcal{T}_{Spec(R)})$ is a Stone's space. Let $P_1,P_2\in Spec(R)$ be distinct prime ideals and let $x\in P_1\setminus P_2$. Notice that $V_{1-x}$ and $V_{x}$ are open disjoint neighborhoods of $P_1$ and $P_2$, respectively.
\end{proof}
The relation between Boolean algebras and Boolean rings is demonstrated in following proposition. 
\begin{Proposition}
($a$) Every Boolean algebra induces a Boolean ring.\\
($b$) Every Boolean ring induces a Boolean algebra.
\end{Proposition}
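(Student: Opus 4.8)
The plan is to exhibit explicit constructions in both directions and verify the defining axioms.

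\textbf{Part (a).} Given a Boolean algebra $(B,+,\cdot,\neg)$, I would build a ring $R_B$ on the same underlying set $B$, with multiplication left unchanged (so $x\cdot y$ is still the meet) and with addition defined by the ``symmetric difference'' $x\oplus y:=(x\cdot\neg y)+(\neg x\cdot y)$, retaining $0$ and $1$ as the additive and multiplicative identities. The easy verifications come first: commutativity of $\oplus$ is immediate from the symmetry of the formula and commutativity of $+$; $x\oplus 0=(x\cdot\neg 0)+(\neg x\cdot 0)=(x\cdot 1)+0=x$ using $\neg 0=1$ from \ref{ComplementProperties} together with the basic identities $x\cdot 1=x$ and $0+x=x$; and $x\oplus x=(x\cdot\neg x)+(\neg x\cdot x)=0+0=0$, so every element is its own additive inverse. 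That $\cdot$ is commutative, associative, idempotent ($x\cdot x=x$) and has identity $1$ is inherited verbatim from the Boolean algebra. What remains is associativity of $\oplus$ and the distributive law $x\cdot(y\oplus z)=(x\cdot y)\oplus(x\cdot z)$; idempotency of $\cdot$ then makes $R_B$ Boolean.

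For distributivity I would expand the left side as $(x\cdot y\cdot\neg z)+(x\cdot z\cdot\neg y)$ by distributivity of $\cdot$ over $+$, and the right side as $\big((x\cdot y)\cdot\neg(x\cdot z)\big)+\big((x\cdot z)\cdot\neg(x\cdot y)\big)$; here I first derive De Morgan's law $\neg(x\cdot z)=\neg x+\neg z$ from \ref{ComplementProperties}(a) (it suffices to check $(x\cdot z)\cdot(\neg x+\neg z)=0$ and $(x\cdot z)+(\neg x+\neg z)=1$, which follow from the distributive and complement laws of \ref{PartialOrderProperties} and the basic identities), and then $(x\cdot y)\cdot\neg(x\cdot z)=x\cdot y\cdot(\neg x+\neg z)=x\cdot y\cdot\neg z$, so the two sides agree. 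The main obstacle is associativity of $\oplus$: I would show that $(x\oplus y)\oplus z$ and $x\oplus(y\oplus z)$ both reduce to the manifestly symmetric expression $\big((x+y+z)\cdot\neg(x\cdot y+y\cdot z+z\cdot x)\big)+(x\cdot y\cdot z)$, which is the ``odd-occurrence'' element; this is the heaviest piece of bookkeeping. A lighter alternative is to invoke \ref{StoneRepThOfBooleanAlg}: $B$ is isomorphic to a sub-Boolean algebra of the power set $\mathcal{P}(\mathcal{U}(B))$, on which $\oplus$ is honest symmetric difference of sets and $\cdot$ is intersection, so the ring axioms on $\mathcal{P}(\mathcal{U}(B))$ descend to $R_B$.

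\textbf{Part (b).} Given a Boolean ring $(R,+,\cdot)$ with identity $1$, recall from the discussion preceding \ref{ClopenBasis} that $R$ is commutative and that $x+x=0$ for all $x$. I would define a Boolean algebra $B_R$ on the set $R$ by $x\wedge y:=x\cdot y$, $x\vee y:=x+y+x\cdot y$, and $\neg x:=1+x$, with the ring's $0$ and $1$ as the distinguished elements. Commutativity of $\vee$ and $\wedge$ comes from commutativity of $R$, and associativity of $\wedge$ is that of $\cdot$; associativity of $\vee$, both distributive laws, and the absorption law $x\vee(x\wedge y)=x=x\wedge(x\vee y)$ are all routine polynomial identities that collapse using $x^2=x$ and $x+x=0$ (e.g.\ $x\vee(x\wedge y)=x+xy+x\cdot xy=x+xy+xy=x$). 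For the complement axioms, $x\wedge\neg x=x(1+x)=x+x^2=x+x=0$ and $x\vee\neg x=x+(1+x)+x(1+x)=x+1+x+0=1$. Hence $B_R$ satisfies all the Boolean-algebra axioms. Although not demanded by the statement, I would remark that the two constructions are mutually inverse: applying (a) to $B_R$ recovers $R$ since $x\oplus y=(x\wedge\neg y)\vee(\neg x\wedge y)$ simplifies to $x+y$, and applying (b) to $R_B$ recovers $B$ since $(x\oplus y)\oplus(x\cdot y)$ simplifies to the original join $x+y$ and $1\oplus x$ to $\neg x$.
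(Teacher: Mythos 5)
Your constructions are exactly the ones the paper uses: $x+_{R_B}y:=x\cdot(\neg y)+_B y\cdot(\neg x)$ in part (a) and $x+_{B_R}y:=x+_R y+_R x\cdot y$, $\neg x:=1+_R x$ in part (b), so the approach is essentially the same; your proof is correct. The only difference is that the paper merely states the constructions and asserts the axioms hold, whereas you actually sketch the verifications (including the associativity of $\oplus$ via the ``odd-occurrence'' element and the legitimate shortcut through \ref{StoneRepThOfBooleanAlg}, which is already available at this point in the paper), which fills in what the paper leaves implicit.
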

\begin{proof}
($a$) Let $(B,+_{B},\cdot,\neg)$ be Boolean algebra; the $3$-tuple $(R_{B},+_{R_{B}},\cdot)$ is a Boolean ring, where $R_{B}:=B$, and for all $x,y\in R_{B}$, $x+_{R_{B}}y:=x\cdot (\neg y)+_{B}y\cdot (\neg x)$. $(R_{B},+_{R_{B}},\cdot)$ is called the Boolean ring \textbf{induced} from the Boolean algebra $(B,+_{B},\cdot,\neg )$.\\
($b$) Let $(R,+_{R},\cdot)$ be a Boolean ring; the $4$-tuple $(B_{R},+_{B_{R}},\cdot,\neg)$ is a Boolean algebra, where $B_{R}:=R$, and for all $x,y\in B_{R}$, $x+_{B_{R}}y:=x+_{R}y+_{R}x\cdot y$ and $\neg x:=1+_{R}x$. $(B_{R},+_{B_{R}},\cdot,\neg)$ is called the Boolean algebra \textbf{induced} from the Boolean ring $(R,+_{R},\cdot)$.
\end{proof}
Ideals (prime ideals) of a Boolean algebra are ideals (prime ideals) of its induced Boolean ring.
\begin{Lemma}\label{RelationBtwIdeals}
Let $(B,+_{B},\cdot,\neg)$ be Boolean algebra and $(R_{B},+_{R_B},\cdot)$ be induced Boolean ring:\\
($a$) If $I$ is an ideal of $B$, then $I$ is an ideal of $R_B$.\\
($a$) If $I$ is a prime ideal of $B$, then $I$ is a prime  ideal of $R_B$.
\end{Lemma}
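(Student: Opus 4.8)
The plan is to verify directly that the set $I$, now regarded as a subset of $R_B$, satisfies the two defining closure conditions of a ring ideal: absorption of multiplication by arbitrary ring elements, and closure under the ring addition $+_{R_{B}}$. For part (a), the absorption condition $R_B\cdot I=I$ is immediate, since the multiplication of $R_B$ is literally the operation $\cdot$ of $B$ and the defining property of a Boolean-algebra ideal already gives $x\cdot a\in I$ for every $x\in B=R_B$ and every $a\in I$. The only step needing a short computation is closure under $+_{R_{B}}$: given $a,b\in I$, recall $a+_{R_{B}}b=a\cdot(\neg b)+_{B}b\cdot(\neg a)$; here $a\cdot(\neg b)\in I$ because $I$ absorbs products in $B$ (with $a\in I$), and likewise $b\cdot(\neg a)\in I$, so their $+_{B}$-sum lies in $I$ by closure of the Boolean-algebra ideal under $+_{B}$. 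This is the one place where both ideal axioms of $B$ are used at once, and it is the only substantive point of part (a).

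For part (b), the key observation is that primality is an intrinsic condition on the multiplicative structure, which $B$ and $R_B$ share. Concretely, by part (a) we already know $I$ is an ideal of $R_B$; it is proper in $R_B$ because it is proper in $B$ and the two underlying sets coincide (equivalently, $1_B\notin I$, and by Proposition 2.\,(c) the element $1_B$ is the multiplicative identity of $R_B$). Finally, if $x,y\in R_B$ satisfy $x\cdot y\in I$, then this is exactly the statement $x\cdot y\in I$ in $B$, so primality of $I$ in $B$ forces $x\in I$ or $y\in I$. Hence $I$ is a prime ideal of $R_B$.

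I do not anticipate a genuine obstacle: the lemma amounts to unwinding the definition of $R_B$ and observing that its multiplication agrees with that of $B$, while its addition is assembled from $\cdot$ and $\neg$ in a way that any ideal of $B$ is automatically stable under. The only mild care required is bookkeeping — invoking absorption for the factors $a\cdot(\neg b)$ and $b\cdot(\neg a)$, then additive closure to combine them — together with the remark that $1_B$ plays the role of the identity of $R_B$ when checking properness in part (b).
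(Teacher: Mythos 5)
Your proof is correct and follows essentially the same route as the paper: both note that the multiplication is shared, reduce the problem to closure under $+_{R_B}$, and verify it by writing $a+_{R_B}b=a\cdot(\neg b)+_{B}b\cdot(\neg a)$ and applying absorption and additive closure of the ideal in $B$. Your additional remarks on properness and the intrinsic nature of primality in part (b) are fine elaborations of what the paper leaves implicit.
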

\begin{proof}
Assume that $I$ is an ideal (prime) of $B$. Since $(B,+_{B},\cdot,\neg)$ and $(R_{B},+_{R_B},\cdot)$ are equipped with the same multiplication operation, it suffices to show that if $x,y\in I$, then  $x+_{R} y\in I$. Notice that $x\cdot (\neg y), y\cdot (\neg x)\in I$ and hence $x\cdot (\neg y)+_{B} y\cdot (\neg x)= x+_{R_B} y\in I$.
\end{proof}
\begin{Lemma}\label{EveryClopenIsAbasis}
Let $R$ be a Boolean ring;\\
($a$) For all $x,y\in R$, $V_{x+y+x\cdot y}=V_x\cup V_y$.\\
($b$) If $V$ is a clopen subset of $Spec(R)$, then there exists $x\in R$ such that $V=V_x$. 
\end{Lemma}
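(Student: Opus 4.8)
The plan is to prove (a) by the standard prime-ideal bookkeeping, and then to deduce (b) from (a) together with the compactness of $Spec(R)$ (\ref{SpecIsCompact}), paralleling the argument for \ref{ClopenSetsAreBasisElements}.

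For (a), I would fix $x,y\in R$ and observe that, since $V_z=\{P\in Spec(R):z\notin P\}$ for every $z\in R$, the asserted equality $V_{x+y+x\cdot y}=V_x\cup V_y$ is equivalent to the statement that, for an arbitrary prime ideal $P$,
\[
x+y+x\cdot y\in P\quad\Longleftrightarrow\quad x\in P\ \text{and}\ y\in P.
\]
The direction ``$\Leftarrow$'' is immediate since an ideal is closed under addition and absorbs multiplication. For ``$\Rightarrow$'', I would multiply $x+y+x\cdot y$ by $x$ and use that $R$ is Boolean, hence $x^{2}=x$ and $2\,x\cdot y=0$:
\[
x\cdot(x+y+x\cdot y)=x^{2}+x\cdot y+x^{2}\cdot y=x+x\cdot y+x\cdot y=x+2\,x\cdot y=x,
\]
so $x+y+x\cdot y\in P$ forces $x=x\cdot(x+y+x\cdot y)\in P$, and symmetrically $y\in P$. (Note that $x+y+x\cdot y$ is exactly the join $x+_{B_{R}}y$ in the Boolean algebra induced by $R$, so (a) is the spectral counterpart of \ref{PropertiesOfBasisElements}(c).)

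For (b), let $V\subseteq Spec(R)$ be clopen. Being open, and $\{V_a:a\in R\}$ being a basis for the Zariski topology (\ref{ZarBasis}), we have $V=\bigcup_{a\in A}V_a$ for some $A\subseteq R$; being closed in the compact space $Spec(R)$ (\ref{SpecIsCompact}), $V$ is compact, so already $V=\bigcup_{i=1}^{n}V_{a_i}$ for finitely many $a_1,\dots,a_n\in A$ (if $V=\emptyset$ take $x=0$, since $V_0=\emptyset$ because $0$ lies in every ideal). Then I would iterate (a): setting $b_1:=a_1$ and $b_{k+1}:=b_k+a_{k+1}+b_k\cdot a_{k+1}$, an easy induction gives $V_{b_k}=\bigcup_{i=1}^{k}V_{a_i}$, so $x:=b_n$ satisfies $V=V_x$. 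I do not expect any genuine obstacle; the only points that need attention are the characteristic-$2$ cancellation in the displayed identity for (a) and the bookkeeping of the iterated join in (b).
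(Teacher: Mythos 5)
Your proof is correct, and part (b) coincides with the paper's argument exactly: write the open set $V$ as a union of basic sets $V_a$, use closedness in the compact space $Spec(R)$ (\ref{SpecIsCompact}) to extract a finite subunion, and collapse it to a single $V_x$ by iterating (a); your explicit induction on the iterated join $b_k$ and the remark that $V_0=\emptyset$ handles the empty case, a detail the paper leaves implicit. The only genuine divergence is in the inclusion $V_x\cup V_y\subseteq V_{x+y+x\cdot y}$ of part (a). The paper argues via primality: if $x\notin P$ then, since $x$ is idempotent, $1+x\in P$, hence $1+(x+y+x\cdot y)=(1+x)\cdot(1+y)\in P$, and the idempotent $x+y+x\cdot y$ cannot then also lie in the proper ideal $P$. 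You instead use the identity $x\cdot(x+y+x\cdot y)=x$ (correct: $x^{2}+x\cdot y+x^{2}\cdot y=x+2\,x\cdot y=x$ by idempotence and characteristic $2$), which gives $x+y+x\cdot y\in P\Rightarrow x\in P$ and, symmetrically, $y\in P$, using only that $P$ is an ideal. Your route is marginally more elementary and slightly more general --- it establishes $x+y+x\cdot y\in I\iff x\in I$ and $y\in I$ for an arbitrary ideal $I$, not just a prime one --- while the paper's route reuses the idempotent dichotomy it has already set up for prime ideals. Either is perfectly adequate here.
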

\begin{proof}
($a$) Let $P\in V_{x+y+x\cdot y}$; if $x,y\in P$, then $x+y+x\cdot y\in P$, a contradiction. Without loss of generality, we may assume that $x\notin P$ and hence $P\in V_x$ which implies that $V_{x+y+x\cdot y}\subset V_x\cup V_y$. Conversely, assume that $P\in V_x\cup V_y$. Without loss of generality, we may assume that $P\in V_x$ and hence $x\notin P$. Since $2x=0$, $1-x=1+x\in P$ and hence $(1+x)\cdot (1+y)=1+(x+y+x\cdot y)\in P$. Since $(x+y+x\cdot y)$ is idempotent,  $(x+y+x\cdot y)\notin P$, and therefore $P\in V_{x+y+x\cdot y}$, as wanted.\\
($b$) Assume that $V$ is a clopen subset of $Spec(R)$. On one hand, since $V$ is open there exists a subset $A\subset R$ such that $V:=\bigcup\limits_{a\in A} V_{a}$. On the other hand, since $V$ is a closed subset of a compact space, it is compact and hence there exist $a_1,...,a_n\in A$ such that $V=\bigcup\limits_{i=1}^n V_{a_i}$. By part ($a$), there exists $x\in R$ such that $V=\bigcup\limits_{i=1}^n V_{a_i}=V_x$.
\end{proof}
\begin{Theorem}\label{StoneAsSpec}
 Let $(B,+_{{\textstyle\mathstrut}B},\cdot,\neg)$ be Boolean algebra and $(R_{B},+_{R_{B}},\cdot)$ be the Boolean ring induced from $(B,+_{B},\cdot,\neg)$. Then, $Spec(R_{B})$ is homeomorphic to $\mathcal{U}(B)$. 
\end{Theorem}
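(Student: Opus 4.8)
The plan is to build an explicit bijection between the two spaces and check that it carries the standard basis of one onto the standard basis of the other; since both spaces are compact Hausdorff Stone's spaces, matching the bases immediately yields a homeomorphism.

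\textbf{Step 1: identifying the underlying sets.} By \ref{HomoAndUlta} and \ref{HomoAndPrime}, a subset $F\subseteq B$ is an ultrafilter of $B$ if and only if its complement $B\setminus F$ is a prime ideal of the Boolean algebra $B$ (both are exactly $\varphi^{-1}\{1\}$, resp. $\varphi^{-1}\{0\}$, for some homomorphism $\varphi:B\to\mathbb{Z}_2$). By \ref{RelationBtwIdeals}, every prime ideal of $B$ is a prime ideal of the induced Boolean ring $R_B$. For the reverse inclusion I would observe that the notion of (prime) ideal is insensitive to passing between $B$ and $R_B$: the two structures carry the \emph{same} multiplication, so the absorption condition $x\cdot a\in I$ and the primality condition $x\cdot y\in I\Rightarrow x\in I$ or $y\in I$ are literally identical; and the two additions are interdefinable, $a+_{R_B}b=a\cdot(\neg b)+_B b\cdot(\neg a)$ and conversely $a+_B b=(a+_{R_B}b)+_{R_B}(a\cdot b)$ by the Proposition relating Boolean algebras and Boolean rings, so any ideal of $R_B$ is automatically closed under $+_B$ and hence is an ideal of $B$ (properness also coincides, being $1\notin I$ in either case). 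Thus $Spec(R_B)$ is precisely the set of complements of ultrafilters of $B$, and the map $\Phi:\mathcal{U}(B)\to Spec(R_B)$ given by $\Phi(F)=B\setminus F$ is a well-defined bijection with inverse $P\mapsto B\setminus P$.

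\textbf{Step 2: matching the topologies.} Fix $x\in B=R_B$. By definition $U_x=\{F\in\mathcal{U}(B):x\in F\}$ and $V_x=\{P\in Spec(R_B):x\notin P\}$, and for an ultrafilter $F$ one has $x\in F$ iff $x\notin B\setminus F$; hence $\Phi(U_x)=V_x$ and $\Phi^{-1}(V_x)=U_x$. Since $\{U_x:x\in B\}$ is a basis for $\mathcal{T}_{\mathcal{U}(B)}$ by \ref{PropertiesOfBasisElements} and $\{V_x:x\in R_B\}$ is a basis for the Zariski topology $\mathcal{T}_{Spec(R_B)}$ by \ref{ZarBasis}, both $\Phi$ and $\Phi^{-1}$ pull basic open sets back to basic open sets, so $\Phi$ is a homeomorphism. (One could instead finish by noting $\mathcal{U}(B)$ and $Spec(R_B)$ are both compact Hausdorff, so the continuous bijection $\Phi$ is automatically a homeomorphism; or route through \ref{StoneAsSetOfHom} by associating to $P\in Spec(R_B)$ the homomorphism $B\to R_B/P\cong\mathbb{Z}_2$.)

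The only point that I expect to need genuine care is the set-level identification in Step 1 — namely that ``prime ideal of $R_B$'' and ``prime ideal of $B$'' describe the same subsets of $B$ — which rests on the mutual definability of $+_B$ and $+_{R_B}$; once that is in hand the rest is a purely formal comparison of bases.
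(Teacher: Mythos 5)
Your proof is correct, but it takes a genuinely different route from the paper's. The paper never constructs a point-level map between $\mathcal{U}(B)$ and $Spec(R_B)$ at all: it defines $\varphi:Clop(\mathcal{U}(B))\to Clop(Spec(R_{B}))$, $U_x\mapsto V_x$, checks it is an isomorphism of Boolean algebras (using \ref{EveryClopenIsAbasis} for the join formula $V_{x+y+x\cdot y}=V_x\cup V_y$ and for surjectivity, and only the \emph{forward} direction of \ref{RelationBtwIdeals} for injectivity), and then invokes the two representation theorems \ref{StoneRepThOfBooleanAlg} and \ref{StoneRepThOfStoneSpace} to convert the algebra isomorphism into a homeomorphism of the spaces. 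You instead exhibit the homeomorphism explicitly as $F\mapsto B\setminus F$, which forces you to prove the converse of \ref{RelationBtwIdeals} --- that every prime ideal of the ring $R_B$ is a prime ideal of the Boolean algebra $B$ and hence the complement of an ultrafilter. You correctly isolate this as the one delicate point, and your argument for it is sound: the multiplications coincide, properness is $1\notin I$ in both settings, and closure of a ring ideal under $+_B$ follows from $a+_B b=(a+_{R_{B}}b)+_{R_{B}}(a\cdot b)$ (the join is a ring polynomial in $a,b$, each of whose summands lies in the ideal). What your approach buys is an elementary, explicit homeomorphism and independence from the duality machinery of Section 4; what the paper's approach buys is that it needs only the one-directional Lemma \ref{RelationBtwIdeals} it actually proved, at the cost of routing through both Stone representation theorems. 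One small caveat: the identity $a+_B b=(a+_{R_{B}}b)+_{R_{B}}(a\cdot b)$ is not literally stated in the paper's Proposition on induced Boolean rings (which only gives the formula for $+_{B_R}$ in terms of $+_R$ without proving the two constructions are mutually inverse), so in a final write-up you should verify it directly rather than cite it.
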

\begin{proof}
By \ref{StoneRepThOfBooleanAlg} and \ref{StoneRepThOfStoneSpace}, it suffices to show that the Boolean algebras $Clop(Spec(R_{B}))$ and $Clop(\mathcal{U}(B))$ are isomorphic. To that end, define  $\varphi:Clop(\mathcal{U}(B))\to Clop(Spec(R_{B}))$ that maps $U_x$ to $V_x$ for all $x\in B$.\\
$\bullet$ $\varphi$ is homomorphism. Indeed, let $x,y\in B$, then:\\
- $\varphi(U_x\cup U_y)=\varphi(U_{x+_{{\textstyle\mathstrut}B} y})=V_{x+_{{\textstyle\mathstrut}B} y}=V_{x+_{R_{B}}y+_{R_{B}}x\cdot y}=V_x\cup V_y$, by \ref{EveryClopenIsAbasis} part ($a$).\\
- $\varphi(U_x\cap U_y)=\varphi(U_{x\cdot y})=V_{x\cdot y}=V_x\cap V_y$, by \ref{PropertiesOfBasisElements} and \ref{ZarBasis}.\\
- Since $U_0=V_0=\emptyset$ and $U_1=V_1=B$, $\varphi$ is homomorphism.\\
$\bullet$ $\varphi$ is injective. Indeed, let  $U_x,U_y\in Clop(\mathcal{U}(B))$ be distinct and let $F\in U_x\setminus U_y$. Let $P:=B\setminus F$, then by \ref{RelationBtwIdeals}, $P\in Spec(R_B)$. Moreover, since $x\in F$ and $y\notin F$, $x\notin P$ and $y\in P$ and hence $P\in V_x\setminus V_y$.\\
$\bullet$ The surjectivity of $\varphi$ follows from \ref{EveryClopenIsAbasis} part ($b$). Therefore, the homomorphism $\varphi:Clop(\mathcal{U}(B))\to Clop(Spec(R_{B}))$ is an isomorphism as desired. 
\end{proof}
\section{Compactifications of a discrete space}
Let $X$ and $Y$ be topological spaces. $Y$ is called a \textbf{compactification} of $X$ if $Y$ is a compact Hausdorff space that contains a dense subset homeomorphic to $X$. In the case of discrete spaces, every compactification of a discrete space $X$ is determined by a sub-Boolean algebra $\mathcal{B}\subset \mathcal{P}(X)$ that separates points of $X$; and conversely, every sub-Boolean algebra $\mathcal{B}\subset \mathcal{P}(X)$ that separates points of $X$ determines a compactification of $X$. Every compactification of $X$ is totally disconnected. The domination relation between any two compactifications of $X$ is determined by the relation between their corresponding sub-Boolean algebras.
\begin{Lemma}\label{DenseSupOfStone}
Let $X$ be a non-empty set and $\mathcal{B}\subset \mathcal{P}(X)$ be a sub-Boolean algebra. The family $\{F_x:x\in X\}$, where $F_x:=\{A\in \mathcal{B}: x\in A\}$, is a dense discrete subspace of $\mathcal{U}(\mathcal{B})$.
\end{Lemma}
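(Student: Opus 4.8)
The plan is to verify three things in turn: that each $F_x$ is a bona fide point of $\mathcal{U}(\mathcal{B})$, that the collection $\{F_x:x\in X\}$ is dense, and that the subspace topology it inherits is discrete.

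First I would check that $F_x:=\{A\in\mathcal{B}:x\in A\}$ is an ultrafilter of $\mathcal{B}$, so that $x\mapsto F_x$ really takes values in $\mathcal{U}(\mathcal{B})$. It is a filter: if $x\in A$ and $x\in C$ then $x\in A\cdot C=A\cap C$, and if $A\in F_x$ with $A\leqslant C$ (i.e.\ $A\subseteq C$) for some $C\in\mathcal{B}$ then $x\in C$; it is proper since $x\notin\emptyset=0$. It is moreover an ultrafilter: for any $A\in\mathcal{B}$, if $x\notin A$ then $x\in X\setminus A=\neg A$, and $\neg A\in\mathcal{B}$ because $\mathcal{B}$ is a sub-Boolean algebra, so $\neg A\in F_x$ — and plainly not both $A$ and $\neg A$ lie in $F_x$. (This is the analogue for $\mathcal{B}$ of \ref{PrincipalFilterAreUltra}.)

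Next, density. By \ref{PropertiesOfBasisElements}(e) the sets $U_A=\{F\in\mathcal{U}(\mathcal{B}):A\in F\}$, $A\in\mathcal{B}$, form a basis for $\mathcal{U}(\mathcal{B})$, and by \ref{PropertiesOfBasisElements}(b) (with $n=1$) one has $U_A=\emptyset$ exactly when $A=0$, i.e.\ when $A=\emptyset$. So, given any nonempty open $U\subseteq\mathcal{U}(\mathcal{B})$, pick a nonempty basic set $U_A\subseteq U$; then $A\neq\emptyset$, choose $x\in A$, and then $A\in F_x$, that is $F_x\in U_A\subseteq U$. Hence $\{F_x:x\in X\}$ meets every nonempty open set and is therefore dense.

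Finally, discreteness, which is where the real content sits. For a fixed $x$ I want a basic open set that singles out $F_x$ among all the $F_y$'s, and the obvious candidate is $U_{\{x\}}$: indeed $F_y\in U_{\{x\}}$ iff $\{x\}\in F_y$ iff $y\in\{x\}$, i.e.\ iff $F_y=F_x$; so $U_{\{x\}}\cap\{F_y:y\in X\}=\{F_x\}$, and the subspace is discrete (in fact each $F_x$ is then isolated in all of $\mathcal{U}(\mathcal{B})$, since $U_{\{x\}}=\{F_x\}$). The one thing to watch is that this step uses $\{x\}\in\mathcal{B}$; the main obstacle is thus not a delicate point of topology but simply having a small enough clopen set on hand to witness isolation, so in the setting where this lemma is applied — where $\mathcal{B}$ contains every singleton of $X$ (equivalently every finite subset) — the argument goes through verbatim.
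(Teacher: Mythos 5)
Your proof is correct and follows essentially the same route as the paper's: $F_x$ is an ultrafilter, density comes from picking a point of $A$ for a nonempty basic set $U_A\subseteq U$, and discreteness comes from $U_{\{x\}}=\{F_x\}$. The caveat you flag at the end is real — the paper's own proof also writes $U_{\{x\}}$ and thus tacitly assumes $\{x\}\in\mathcal{B}$, a hypothesis absent from the statement of the lemma (and genuinely needed: for a separating sub-Boolean algebra whose nonempty members are all infinite, the subspace $\{F_x:x\in X\}$ need not be discrete).
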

\begin{proof}
Notice that a basis element of $\mathcal{U}(\mathcal{B})$ is of the form $U_{A}=\{F\in \mathcal{U}(\mathcal{B}): A\in F\}$, and that for $x\in X$, $F_x:=\{A\in \mathcal{B}: x\in A\}$ is an ultrafilter of $\mathcal{B}$. Now, for any $a\in A$, $F_a\in U_{A}\cap \{F_x:x\in X\}$ and hence $\{F_x:x\in X\}$ is a dense subset of $\mathcal{U}(\mathcal{B})$. Moreover, for any $x\in X$, the singleton $\{F_x\}$ is an open set as $\{F_x\}=U_{\{x\}}$ is a singleton. Consequently, $\{F_x:x\in X\}$ is a discrete subspace of $\mathcal{U}(\mathcal{B})$.
\end{proof}
\begin{Definition}
Let $X$ be a non-empty set and $\mathcal{B}\subset \mathcal{P}(X)$ be a subset. $\mathcal{B}$ is said to \textbf{separate points} of $X$ if for all $x,y\in X$ such that $x\neq y$, there exists $A\in \mathcal{B}$ such that $x\in A$ and $y\in X\setminus A$. 
\end{Definition}
\begin{Theorem}\label{BooleanThatSepPointsDetrComption}
Let $X$ be a non-empty set and $\mathcal{B}\subset \mathcal{P}(X)$ be a sub-Boolean algebra. If $\mathcal{B}$ separates points of $X$, then $\mathcal{U}(\mathcal{B})$ is a compactification of $X$. 
\end{Theorem}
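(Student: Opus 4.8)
The plan is to verify directly the two defining properties of a compactification: that $\mathcal{U}(\mathcal{B})$ is a compact Hausdorff space, and that it contains a dense subset homeomorphic to the discrete space $X$. The first is immediate: $\mathcal{U}(\mathcal{B})$ is the Stone's space of the Boolean algebra $\mathcal{B}$, hence compact Hausdorff (indeed totally disconnected) by \ref{SpaceOfUltraOfBoolean}. So all the content lies in producing the dense copy of $X$.

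For that, consider the map $\iota \colon X \to \mathcal{U}(\mathcal{B})$ given by $\iota(x) = F_x$, where $F_x = \{A \in \mathcal{B} : x \in A\}$ is the ultrafilter appearing in \ref{DenseSupOfStone}. By \ref{DenseSupOfStone} the image $D := \{F_x : x \in X\}$ is a dense subspace of $\mathcal{U}(\mathcal{B})$ which is discrete in the subspace topology, so it only remains to see that $\iota$ is a homeomorphism of $X$ onto $D$. Since $X$ carries the discrete topology, $\iota$ is automatically continuous, and since $D$ is discrete the inverse map $D \to X$ is continuous as well; thus the sole thing to check is that $\iota$ is injective. Here is where the separation hypothesis enters: given $x \neq y$ in $X$, choose $A \in \mathcal{B}$ with $x \in A$ and $y \in X \setminus A$; then $A \in F_x$ while $A \notin F_y$, so $F_x \neq F_y$.

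Combining these observations, $\iota$ embeds $X$ as the dense subspace $D$ of the compact Hausdorff space $\mathcal{U}(\mathcal{B})$, which is exactly the statement that $\mathcal{U}(\mathcal{B})$ is a compactification of $X$. I do not expect a genuine obstacle: the substantive facts --- compactness of $\mathcal{U}(\mathcal{B})$ and the density and discreteness of $\{F_x : x \in X\}$ --- are already established in \ref{SpaceOfUltraOfBoolean} and \ref{DenseSupOfStone}, and the one new ingredient is the elementary remark that ``$\mathcal{B}$ separates points'' is precisely what makes $x \mapsto F_x$ injective. The only point deserving a little care is to use the subspace topology on $D$ supplied by \ref{DenseSupOfStone} (which is genuinely discrete) rather than merely matching $D$ with $X$ as sets.
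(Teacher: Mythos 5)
Your proof is correct and takes essentially the same route as the paper's: the paper also sends $x$ to the ultrafilter $F_x$, gets injectivity from the separation hypothesis, and invokes \ref{DenseSupOfStone} for density and for the fact that each $\{F_x\}=U_{\{x\}}$ is open (which is the paper's way of saying, as you do, that the image is discrete so the inverse is continuous). The only cosmetic difference is that the paper phrases the embedding as ``continuous, injective, and open'' rather than as a homeomorphism onto a discrete dense subspace.
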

\begin{proof}
Define $\alpha:X\to \mathcal{U}(\mathcal{B})$ that sends each $x\in X$ to $F_x\in \mathcal{U}(\mathcal{B})$.\\
$\bullet$ $\alpha:X\to \mathcal{U}(\mathcal{B})$ is continuous as $X$ is discrete.\\
$\bullet$ It is injective since $\mathcal{B}$ separates points of $X$. Indeed, if $x,y\in X$ are distinct, then there exists $A\in \mathcal{B}$ such that $x\in A$ and $y\in X\setminus A$. Because $A\in F_x$ and $X\setminus A\in F_y$, $\alpha(x)\neq\alpha(y)$.\\
$\bullet$ $\alpha:X\to \mathcal{U}(\mathcal{B})$ is open as for all $x\in X$, $\alpha(\{x\})=\{F_x\}=U_{\{x\}}$ by \ref{DenseSupOfStone}.\\
$\bullet$ $\alpha(X)=\{F_x:x\in X\}$ which is a dense subset of $\mathcal{U}(\mathcal{B})$. Therefore, $\mathcal{U}(\mathcal{B})$ is a compactification of $X$. 
\end{proof}
\begin{Example}
Let $G$ be a group acts on $X$; then $Comm_G(X)$ is a sub-Boolean algebra of $\mathcal{P}(X)$ that separates points of $X$. The compactification $\mathcal{U}(Comm_G(X))$ of $X$ is called the \textbf{end compactification}.
\end{Example}
\begin{Example}
Let $X$ be a non-empty set, the family $\mathcal{B}_0:=\{A\in \mathcal{P}(X):$ $A$ is finite or $X\setminus A$ is finite$\}$ is a sub-Boolean algebra of $\mathcal{P}(X)$ that separates points of $X$. We will prove that the compactification $\mathcal{U}(\mathcal{B}_0)$ is the one-point compactification of $X$ \cite{Corn}.
\end{Example}
\subsection{The one-point compactification of a discrete space}
Let $Y:=X\cup\{\infty\}$ be the one-point compactification of the discrete space $X$. It can be easily seen that a subset $A\subset Y$ is clopen if and only if either $A$ is a finite subset of $X$ or $Y\setminus A$ is finite. We aim to show that the one-point compactification of $X$ is the Stone's space of the Boolean algebra $\mathcal{B}_0:=\{A\in \mathcal{P}(X):$ $A$ is finite or $X\setminus A$ is finite$\}$. 
\begin{Theorem}
Given a non-empty set $X$, the family $\mathcal{B}_0:=\{A\in \mathcal{P}(X):$ $A$ is finite or $X\setminus A$ is finite$\}$ is a sub-Boolean algebra of $\mathcal{P}(X)$ that separates points of $X$. Moreover, as a discrete space $\mathcal{U}(\mathcal{B}_0)$ is the one-point compactification of $X$. 
\end{Theorem}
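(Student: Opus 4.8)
The plan is to dispatch the two elementary assertions directly and then recognise $\mathcal{U}(\mathcal{B}_0)$ as the Stone's space dual to the clopen algebra of $Y:=X\cup\{\infty\}$. Throughout I would assume $X$ infinite; when $X$ is finite one has $\mathcal{B}_0=\mathcal{P}(X)$ and the construction degenerates (the one-point compactification of a finite discrete space adds an isolated point and is not of this form), so that case has to be set aside or read off trivially.

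First, $\mathcal{B}_0$ is a sub-Boolean algebra of $\mathcal{P}(X)$: $\emptyset$ and $X=X\setminus\emptyset$ lie in $\mathcal{B}_0$; if $A\in\mathcal{B}_0$ then either $A$ or $X\setminus A$ is finite, so $X\setminus A\in\mathcal{B}_0$; and for $A,B\in\mathcal{B}_0$, if both are finite then so is $A\cup B$, while if, say, $X\setminus A$ is finite then $X\setminus(A\cup B)\subseteq X\setminus A$ is finite, so $A\cup B\in\mathcal{B}_0$, with closure under $\cap$ following by De~Morgan. It separates points because for $x\neq y$ the singleton $\{x\}$ is finite, belongs to $\mathcal{B}_0$, contains $x$ and omits $y$. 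Hence \ref{BooleanThatSepPointsDetrComption} applies and $\mathcal{U}(\mathcal{B}_0)$ is a compactification of the discrete space $X$, with canonical dense embedding $x\mapsto F_x$ from \ref{DenseSupOfStone}.

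Next I would pin down $Clop(Y)$: a subset $A\subseteq Y$ is clopen exactly when $A$ is a finite subset of $X$, or $\infty\in A$ and $X\setminus A$ is finite. In particular these clopen sets form a basis of $Y$, so $Y$ is zero-dimensional, hence (being compact Hausdorff) a Stone's space by \ref{CharOfTotallyDisconnectedSpaces}. Define $\Phi\colon Clop(Y)\to\mathcal{B}_0$ by $\Phi(A)=A\cap X$. One checks that $\Phi$ takes values in $\mathcal{B}_0$, that it respects $\cup$ and $\cap$, and that it respects complementation since $(Y\setminus A)\cap X=X\setminus(A\cap X)$; it is onto because a finite $C\in\mathcal{B}_0$ is itself clopen in $Y$ while a $C\in\mathcal{B}_0$ with $X\setminus C$ finite is the trace of the clopen set $C\cup\{\infty\}$; and it is one-to-one precisely because $X$ is infinite, since two clopen subsets of $Y$ with the same trace on $X$ could differ only in whether they contain $\infty$, and a clopen set together with its modification by $\infty$ are both clopen only when $X$ is finite. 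Thus $\Phi$ is an isomorphism of Boolean algebras. Now \ref{StoneRepThOfStoneSpace} gives $Y\cong\mathcal{U}(Clop(Y))$ and \ref{IsoBooleansHaveHomeoStonesSpaces} converts $\Phi$ into a homeomorphism $\mathcal{U}(\mathcal{B}_0)\cong\mathcal{U}(Clop(Y))$; composing yields a homeomorphism $h\colon\mathcal{U}(\mathcal{B}_0)\to Y$. To see this is really the one-point compactification, I would trace $h$: it carries $F_x=\{A\in\mathcal{B}_0:x\in A\}$ to $\{A\in Clop(Y):x\in A\}$, i.e.\ to the point $x\in Y$, so $h$ restricts to the identity on $X$; the single remaining point of $\mathcal{U}(\mathcal{B}_0)$ is the cofinite ultrafilter $F_\infty=\{A\in\mathcal{B}_0:X\setminus A\text{ finite}\}$, which $h$ sends to $\infty$.

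A more hands-on alternative avoids duality: classify the ultrafilters of $\mathcal{B}_0$ directly as $\{F_x:x\in X\}\cup\{F_\infty\}$ — every $A\in\mathcal{B}_0$ is finite or cofinite, so $F_\infty$ is an ultrafilter; an ultrafilter containing some finite set contains a singleton, by primeness \ref{UltrIsPrime}, hence equals the corresponding $F_x$; an ultrafilter containing no finite set is contained in, so equals, $F_\infty$ — and then verify that $F_x\mapsto x$, $F_\infty\mapsto\infty$ is continuous ($U_{\{x\}}$ goes to the isolated point $\{x\}$, and $U_{X\setminus K}$ goes to the basic neighbourhood $Y\setminus K$ of $\infty$ for finite $K\subseteq X$), hence a homeomorphism from the compact $\mathcal{U}(\mathcal{B}_0)$ onto the Hausdorff $Y$. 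Either way, the only genuinely delicate point is the bookkeeping around $\infty$: getting the clopen description of $Y$ exactly right (a \emph{cofinite} subset of $X$ that omits $\infty$ is open but not closed in $Y$) and keeping the degenerate finite-$X$ case separate. Everything else is routine verification.
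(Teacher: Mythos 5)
Your main argument is essentially the paper's: both establish the Boolean-algebra isomorphism $Clop(Y)\to\mathcal{B}_0$ given by $A\mapsto A\cap X=A\setminus\{\infty\}$ and then invoke \ref{StoneRepThOfStoneSpace} together with \ref{IsoBooleansHaveHomeoStonesSpaces} (i.e.\ \ref{IsoAndHomeo}) to transport the homeomorphism. You do, however, tighten two points the paper glosses over: the correct description of $Clop(Y)$ (a cofinite subset of $X$ omitting $\infty$ is open but not closed in $Y$, so the paper's stated ``if and only if'' is too generous, even though this does not affect the map), and the fact that injectivity of $A\mapsto A\setminus\{\infty\}$ genuinely requires $X$ to be infinite --- for finite $X$ the map is two-to-one and the statement degenerates. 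Your tracing of the resulting homeomorphism to confirm it fixes $X$ pointwise, and your alternative direct classification of $\mathcal{U}(\mathcal{B}_0)$ as $\{F_x:x\in X\}\cup\{F_\infty\}$, are both correct additions but go beyond what the paper records.
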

\begin{proof}
The first part is straightforward. Let $Y:=X\cup\{\infty\}$ be the one-point compactification of $X$. As noticed a subset $A\subset Y$ is clopen if and only if either $A$ is a finite subset of $X$ or $Y\setminus A$ is finite. In particular, if $A\in Clop(Y)$, then $A\setminus\{\infty\}\in \mathcal{B}_0$. Define the map $\varphi:Clop(Y)\to \mathcal{B}_0$ that sends $A\in Clop(Y)$ to $A\setminus\{\infty\}$. It is straightforward to see that $\varphi:Clop(Y)\to \mathcal{B}_0$ is an isomorphism and $Y$ is a Stone's space. Hence, by \ref{IsoAndHomeo}, $Y=\mathcal{U}(Clop(Y))$ is homeomorphic to the Stone's space $\mathcal{U}(\mathcal{B}_0)$.
\end{proof}
\subsection{The Stone-$\breve{C}$ech compactification of a discrete space}
Given a compactification $Y$ of $X$ with an embedding $\alpha:X\to Y$ such that $\alpha(X)$ is dense in $Y$. The compactification $Y$ is called the \textbf{Stone-$\breve{C}$ech compactification} if for any compact Hausdorff space $K$, every continuous map $f:\alpha(X)\to K$ admits a continuous extension $\widetilde{f}:Y\to K$. For the discrete space $X$, the topological space $\mathcal{U}(\mathcal{P}(X))$ is a compactification of $X$; we aim to show that $\mathcal{U}(\mathcal{P}(X))$ is the Stone-$\breve{C}$ech compactification of $X$. We first give a characterization for the continuity of a map between topological spaces in terms of convergence of filters.

\begin{Definition}
Let $X$ be a topological space, and $F$ be a filter of $\mathcal{P}(X)$. We say that $F$ \textbf{converges} to $x\in X$ if for every neighborhood $U$ of $X$, there exists $A\in F$ such that $A\subset U$. 
\end{Definition}
\begin{Example}
Let $X$ be a topological space, and $x\in X$. The family $\mathcal{N}(x)$ of all neighborhoods of $x$ is a filter called the \textbf{neighborhood filter} of $x$. Clearly, $\mathcal{N}(x)$ converges to $x$. Moreover, a filter $F$ converges to $x$ if and only if $\mathcal{N}(x)\subset F$.
\end{Example}
A function $f:X\to Y$ between sets maps filters (ultrafilters) of $\mathcal{P}(X)$ to filters (ultrafilters) of $\mathcal{P}(Y)$.
\begin{Lemma}
Let $f:X\to Y$ be a function. Then:\\
($a$) If $F$ is a filter of $\mathcal{P}(X)$, then $F_f:=\{f(A):A\in F\}$ is a filter of $\mathcal{P}(Y)$.\\
($b$) If $F$ is an ultrafilter of $\mathcal{P}(X)$, then $F_f:=\{f(A):A\in F\}$ is an ultrafilter of $\mathcal{P}(Y)$.
\end{Lemma}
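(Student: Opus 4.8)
The plan is, in each part, to verify directly the two axioms defining a filter of $\mathcal{P}(Y)$ given earlier — closure under $\cap$ and upward closure under $\subseteq$ — keeping track of properness where it matters, and then to read off (b) from the ultrafilter dichotomy.

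For (a): the family $\{f(A):A\in F\}$ is a \emph{filter base} — it is nonempty (as $F\neq\emptyset$), and given $A_1,A_2\in F$ one has $f(A_1\cap A_2)\subseteq f(A_1)\cap f(A_2)$ with $A_1\cap A_2\in F$ — so it generates the filter $F_f=\{B\subseteq Y:B\supseteq f(A)\text{ for some }A\in F\}$, which one checks coincides with $\{B\subseteq Y:f^{-1}(B)\in F\}$ (use $f^{-1}(f(A))\supseteq A$ in one direction and $f(f^{-1}(B))\subseteq B$ in the other). In this second description upward closure under $\subseteq$ is immediate since $f^{-1}$ preserves inclusions, and closure under $\cap$ follows from $f^{-1}(B_1\cap B_2)=f^{-1}(B_1)\cap f^{-1}(B_2)$; hence $F_f$ is a filter of $\mathcal{P}(Y)$, and it is proper whenever $F$ is, since $f^{-1}(\emptyset)=\emptyset$. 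The one genuine subtlety — and the point I expect to be the main obstacle — is that when $f$ is not surjective the raw family $\{f(A):A\in F\}$ is \emph{not} upward closed in $\mathcal{P}(Y)$ (a set meeting the complement of the range of $f$ is never a direct image), so the statement must be understood with $F_f$ denoting the filter generated by that family, equivalently $\{B\subseteq Y:f^{-1}(B)\in F\}$; this is in any case the description one needs in the application to $\mathcal{U}(\mathcal{P}(X))$ (pushforward along a not-necessarily-surjective map), and only when $f$ is surjective does the raw family itself coincide with $F_f$. Everything else is routine.

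For (b): assume $F$ is an ultrafilter. The cleanest route reuses earlier machinery: by \ref{ClopBoolean}, $f^{-1}:\mathcal{P}(Y)\to\mathcal{P}(X)$ is a homomorphism of Boolean algebras, and by \ref{HomoAndUlta} we may write $F=\varphi^{-1}\{1\}$ for a homomorphism $\varphi:\mathcal{P}(X)\to\mathbb{Z}_2$; then $F_f=\{B:f^{-1}(B)\in F\}=(\varphi\circ f^{-1})^{-1}\{1\}$ with $\varphi\circ f^{-1}:\mathcal{P}(Y)\to\mathbb{Z}_2$ again a homomorphism, so $F_f$ is an ultrafilter by \ref{HomoAndUlta} — this is exactly the observation already used in the proof of \ref{IsoBooleansHaveHomeoStonesSpaces}. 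Alternatively one argues by hand: $F_f$ is a proper filter by (a), and for any $B\subseteq Y$ exactly one of $f^{-1}(B)$ and $f^{-1}(Y\setminus B)=X\setminus f^{-1}(B)$ lies in $F$ because $F$ is an ultrafilter, which says precisely that exactly one of $B$ and $Y\setminus B$ lies in $F_f$; hence $F_f$ is an ultrafilter.
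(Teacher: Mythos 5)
Your proof is correct, and for part (a) it is in fact more careful than the paper's own argument. The paper works directly with the raw family $\{f(A):A\in F\}$ and verifies the filter axioms using the identity $f(f^{-1}(B))=B$; but that identity holds only when $f$ is surjective (in general $f(f^{-1}(B))=B\cap f(X)$), and, as you observe, for non-surjective $f$ the raw image family is genuinely not upward closed (take $X=\{1\}$, $Y=\{1,2\}$, $F=\{\{1\}\}$: then $\{f(A):A\in F\}=\{\{1\}\}$ fails to contain $Y$). So the statement needs exactly the reading you give it: $F_f$ is the filter generated by the images, equivalently $\{B\subseteq Y:f^{-1}(B)\in F\}$, and your verification of the axioms in that description is the correct repair; it is also the description actually needed downstream in \ref{CharOfContByFiltersConv} and \ref{StoneChecCompOfDiscSpace}, where the relevant maps are not assumed surjective. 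For part (b), your second, hand-checked argument is essentially the paper's (the dichotomy $f^{-1}(B)\in F$ or $X\setminus f^{-1}(B)=f^{-1}(Y\setminus B)\in F$, transported to the preimage description), while your first argument --- composing a homomorphism $\varphi:\mathcal{P}(X)\to\mathbb{Z}_2$ from \ref{HomoAndUlta} with the Boolean algebra homomorphism $f^{-1}:\mathcal{P}(Y)\to\mathcal{P}(X)$ --- is a genuinely slicker alternative that reduces the claim to a one-line composition and makes the contravariant functoriality explicit, at the cost of invoking the earlier machinery.
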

\begin{proof}
($a$) Let $F$ be a filter of $\mathcal{P}(X)$; observe that for any $B\in \mathcal{P}(Y)$, $f(f^{-1}(B))=B$. Now:\\
$\bullet$ Let $B_1,B_2\in F_f$, then there exist $A_1,A_2\in F$ such that $f(A_1)=B_1$ and $f(A_2)=B_2$. Since $A_1\cap A_2\in F$ and $A_1\cap A_2\subset f^{-1}(B_1\cap B_2)$ and hence $f^{-1}(B_1\cap B_2)\in F$. Therefore, $f(f^{-1}(B_1\cap B_2))=B_1\cap B_2\in F_f$.\\
$\bullet$ $B\in F_f$ and $C\subset Y$ such that $B\subset C$. Let $A\in F$ such that $f(A)=B$; since $A\subset f^{-1}(C)$, then $f^{-1}(C)\in F$ and hence $f(f^{-1}(C))=C\in F_f$.\\
($b$) Let $F$ be an ultrafilter of $\mathcal{P}(X)$, then $F_f$ is a filter of $\mathcal{P}(Y)$. Moreover, if $B\subset Y$, then either $f^{-1}(B)\in F$ or $X\setminus f^{-1}(B)=f^{-1}(Y\setminus B)\in F$ and therefore either $B=f(f^{-1}(B))\in F_f$ or $Y\setminus B=f(f^{-1}(Y\setminus B))\in F_f$.
\end{proof}
\begin{Lemma}\label{CharOfContByFiltersConv}
A map $f:X\to Y$ between topological spaces is continuous if and only if for every $x\in X$ and every filter $F$ of $\mathcal{P}(X)$ that converges to $x$, the filter $F_f$ converges to $f(x)$.
\end{Lemma}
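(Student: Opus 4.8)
The plan is to prove the two implications directly, leaning on the neighborhood-filter characterization of convergence recorded in the Example just before the statement: a filter $F$ of $\mathcal{P}(X)$ converges to $x$ precisely when $\mathcal{N}(x)\subset F$, and in particular the neighborhood filter $\mathcal{N}(x)$ itself converges to $x$. I will also use the preceding Lemma, which guarantees that $F_f$ really is a filter of $\mathcal{P}(Y)$, so that the phrase ``$F_f$ converges to $f(x)$'' is meaningful.

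For the forward direction I would assume $f$ continuous, fix $x\in X$, and fix a filter $F$ converging to $x$. To show $F_f$ converges to $f(x)$, take an arbitrary neighborhood $V$ of $f(x)$; the task is to produce $B\in F_f$ with $B\subset V$. Choose an open set $W$ with $f(x)\in W\subset V$. By continuity $f^{-1}(W)$ is open and $x\in f^{-1}(W)\subset f^{-1}(V)$, so $f^{-1}(V)$ is a neighborhood of $x$. Since $F$ converges to $x$, there is $A\in F$ with $A\subset f^{-1}(V)$, and then $f(A)\in F_f$ with $f(A)\subset f(f^{-1}(V))\subset V$. Taking $B=f(A)$ completes this direction.

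For the converse I would assume the filter-convergence condition and show that $f^{-1}(V)$ is open for every open $V\subset Y$, which is enough for continuity. Given $x\in f^{-1}(V)$, the set $V$ is a neighborhood of $f(x)$. Since $\mathcal{N}(x)$ converges to $x$, the hypothesis applied to the filter $\mathcal{N}(x)$ tells us that $(\mathcal{N}(x))_f=\{f(A):A\in\mathcal{N}(x)\}$ converges to $f(x)$, so there is $A\in\mathcal{N}(x)$ with $f(A)\subset V$, that is, $A\subset f^{-1}(V)$. Hence every point of $f^{-1}(V)$ is interior to it, so $f^{-1}(V)$ is open and $f$ is continuous.

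Both directions are short and essentially formal; there is no real obstacle. The only points requiring a little care are the standard bookkeeping distinction between open sets and neighborhoods (the preimage of a neighborhood of $f(x)$ need not be open, merely a neighborhood of $x$, which is exactly what convergence of $F$ consumes) and the observation that it suffices to test continuity against the canonical convergent filters $\mathcal{N}(x)$, so the ``for every filter'' quantifier in the statement is not actually a burden in the converse.
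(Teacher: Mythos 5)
Your proof is correct and takes essentially the same route as the paper's: the forward direction pulls back a neighborhood $V$ of $f(x)$ to a neighborhood of $x$ and feeds it to the convergence of $F$, and the converse tests the hypothesis on the neighborhood filter $\mathcal{N}(x)$. If anything, your forward direction is slightly more careful than the paper's, since you conclude via $f(A)\subset f(f^{-1}(V))\subset V$ rather than invoking the identity $f(f^{-1}(V))=V$, which only holds when $V\subset f(X)$.
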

\begin{proof}
Assume that $f:X\to Y$ is continuous, and $F$ is a filter of $\mathcal{P}(X)$ that converges to $x$. Let $V$ be a neighborhood of $f(x)$, then $f^{-1}(V)$ is a neighborhood of $x$ and consequently $f^{-1}(V)\in F$. Hence, $V=f(f^{-1}(V))\in F_f$ which implies that $F_f$ converges to $f(x)$.

Conversely, assume that for every $x\in X$ and every filter $F$ of $\mathcal{P}(X)$ that converges to $x$, the filter $F_f$ converges to $f(x)$. Since the neighborhood filter $F=\mathcal{N}(x)$ of $x$ converges to $x$, the filter $F_f$ converges to $f(x)$. Let $V\subset Y$ be a neighborhood of $f(x)$, then $V\in F_f$ and therefore there exists $U\in F$ such that $f(U)=V$ which proves that $f:X\to Y$ is continuous.
\end{proof}
\begin{Theorem}\label{StoneChecCompOfDiscSpace}
Let $X$ be a non-empty set equipped with the discrete topology. The compactification $\mathcal{U}(\mathcal{P}(X))$ of $X$ is the Stone-$\breve{C}$ech compactification.
\end{Theorem}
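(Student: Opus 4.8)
\dokaz(\emph{proposal}) The plan is to build the extension by taking ultrafilter limits in $K$. Write $\alpha\colon X\to\mathcal U(\mathcal P(X))$ for the embedding of \ref{BooleanThatSepPointsDetrComption} applied to $\mathcal B=\mathcal P(X)$ (which trivially separates points), so that $\alpha(x)=F_x$ is the principal ultrafilter at $x$; by \ref{DenseSupOfStone} the subspace $\alpha(X)$ is discrete, so a continuous map $f\colon\alpha(X)\to K$ into a compact Hausdorff space $K$ is simply an arbitrary function, which I identify with a function $f\colon X\to K$. The goal is to produce a continuous $\widetilde f\colon\mathcal U(\mathcal P(X))\to K$ with $\widetilde f\circ\alpha=f$; existence is all that the definition of the Stone--$\breve C$ech compactification requires (uniqueness would follow afterwards from density of $\alpha(X)$ and the Hausdorffness of $K$, but is not needed).

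First I would define $\widetilde f$. Fix $F\in\mathcal U(\mathcal P(X))$ and consider the family $\{\,\overline{f(A)}:A\in F\,\}$ of closed subsets of $K$. Since $F$ is a proper filter, every finite intersection $A_1\cap\dots\cap A_n$ lies in $F$, hence is nonempty, and its image is contained in each $\overline{f(A_i)}$; so this family has the finite intersection property, and compactness of $K$ gives $\bigcap_{A\in F}\overline{f(A)}\neq\emptyset$. The crucial point is that this intersection is a single point: if $k_1\neq k_2$ both lay in it, pick disjoint open $V_1\ni k_1$ and $V_2\ni k_2$; since $X=f^{-1}(V_1)\cup f^{-1}(K\setminus V_1)$ and $F$ is prime (being an ultrafilter, by \ref{UltrIsPrime}), one of these two preimages belongs to $F$, and in either case one of $k_1,k_2$ is forced into a closed set disjoint from its own neighborhood, a contradiction. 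I then set $\widetilde f(F)$ equal to the unique point of $\bigcap_{A\in F}\overline{f(A)}$.

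The extension property is then immediate: for $x\in X$ every $A\in F_x$ contains $x$, so $f(x)\in f(A)\subseteq\overline{f(A)}$ for all such $A$, whence $f(x)\in\bigcap_{A\in F_x}\overline{f(A)}=\{\widetilde f(F_x)\}$, i.e. $\widetilde f\circ\alpha=f$. For continuity, fix $F$, put $k=\widetilde f(F)$, and let $V$ be an open neighborhood of $k$. A compact Hausdorff space is regular, so choose an open $V'$ with $k\in V'\subseteq\overline{V'}\subseteq V$. I claim $f^{-1}(V')\in F$: otherwise $f^{-1}(K\setminus V')\in F$ because $F$ is an ultrafilter, which would force $k\in\overline{f(f^{-1}(K\setminus V'))}\subseteq K\setminus V'$, contradicting $k\in V'$. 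Now $U_{f^{-1}(V')}$ is a clopen neighborhood of $F$ in $\mathcal U(\mathcal P(X))$ by \ref{PropertiesOfBasisElements}, and for every $F'\in U_{f^{-1}(V')}$ one has $f^{-1}(V')\in F'$, hence $\widetilde f(F')\in\overline{f(f^{-1}(V'))}\subseteq\overline{V'}\subseteq V$. Thus $\widetilde f^{-1}(V)$ is a neighborhood of each of its points, so it is open and $\widetilde f$ is continuous.

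The main obstacle is the well-definedness step: proving that $\bigcap_{A\in F}\overline{f(A)}$ collapses to a single point. That is exactly where the ultrafilter (prime-filter) hypothesis on $F$ is used and where the Hausdorffness of $K$ enters; once $\widetilde f$ is well defined, the extension identity is trivial and continuity is a routine regularity-shrinking argument. One could instead phrase everything through the pushforward ultrafilter $F_f$ on $K$ and its (unique) limit, invoking \ref{CharOfContByFiltersConv} for continuity, but the closure description above keeps the computations closest to the tools already developed and avoids having to worry about whether $f$ is surjective onto $K$.
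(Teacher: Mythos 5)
Your construction of $\widetilde f$ is the same as the paper's: you define $\widetilde f(F)$ as the unique point of $\bigcap_{A\in F}\overline{f(A)}$, proving nonemptiness by the finite intersection property and compactness of $K$, and uniqueness by splitting $X=f^{-1}(V_1)\cup f^{-1}(K\setminus V_1)$ along the ultrafilter; the verification that $\widetilde f\circ\alpha=f$ is also identical. Where you genuinely diverge is the continuity step. The paper routes this through the filter-convergence characterization of continuity (\ref{CharOfContByFiltersConv}), together with an auxiliary identity $cl(\widetilde f(U_A))=cl(g(A))$ and the pushforward filter $F_g$; you instead give a direct $\varepsilon$-management argument: shrink a neighborhood $V$ of $\widetilde f(F)$ to $V'$ with $\overline{V'}\subseteq V$ by regularity of the compact Hausdorff space $K$, show $f^{-1}(V')\in F$ by the ultrafilter dichotomy, and observe that the basic clopen set $U_{f^{-1}(V')}$ is mapped by $\widetilde f$ into $\overline{V'}\subseteq V$. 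Your version is correct and, in fact, cleaner: the paper's route leans on the lemma that $F_f:=\{f(A):A\in F\}$ is an (ultra)filter, whose proof uses $f(f^{-1}(B))=B$ and hence tacitly assumes $f$ is surjective, and its Claim~4 contains some awkward steps (choosing a ``relatively compact'' neighborhood inside a compact space, and asserting $V=g(B)$ for some $B\in F$). Your remark that the closure description avoids worrying about surjectivity of $f$ onto $K$ is exactly the right instinct; the trade-off is only that the paper's approach exercises the filter-convergence machinery it develops, whereas yours needs nothing beyond regularity of $K$ and the basis $\{U_A\}$.
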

\begin{proof}
By \ref{BooleanThatSepPointsDetrComption}, $\mathcal{U}(\mathcal{P}(X))$ is a compactification of $X$. Let $K$ be a compact Hausdorff space, and $f:\alpha(X)\to K$ be a continuous map, where $\alpha:X\to \mathcal{U}(\mathcal{P}(X))$ is the embedding defined in the proof of \ref{BooleanThatSepPointsDetrComption}.\\
\textbf{Claim 1:} For every $F\in \mathcal{U}(\mathcal{P}(X))$, $\bigcap\limits_{A\in F}cl(f( \alpha(A)))$ is a singleton.\\
\textbf{Proof of claim 1:} Put $g:=f\circ \alpha$. Notice that the family $F$ satisfies the finite intersection property and hence the family $\{g(A):A\in F\}$ satisfies the finite intersection property. Therefore, the family of the closed sets $\{cl(g(A)):A\in F\}$ satisfies the finite intersection property. Consequently, $\bigcap\limits_{A\in F}cl(g(A))\neq \emptyset$, as $K$ is compact. Since $F$ is an ultrafilter, for every $O\subset X$, either $O\in F$ or $X\setminus O\in F$; in particular, if $O_1,O_2\subset X$ are disjoint subsets of $X$, then there exists $A\in F$ such that either $A\cap O_1=\emptyset$ and  $A\cap O_2\neq\emptyset$ or $A\cap O_1\neq\emptyset$ and  $A\cap O_2=\emptyset$. Seeking contradiction assume that $a,b\in \bigcap\limits_{A\in F}cl(g(A))$ are distinct, and choose two disjoint neighborhoods $U_a$ and $U_b$ of $a$ and $b$, respectively. Since $g^{-1}(U_a)$ and $g^{-1}(U_b)$ are disjoint, we may choose, without loss of generality, $A\in F$ such that $A\cap g^{-1}(U_a)\neq \emptyset$ and $A\cap g^{-1}(U_b)=\emptyset$. Hence, $g(A)\cap U_a\neq\emptyset$ and $g(A)\cap U_b=\emptyset$, and thus $b\notin cl(g(A))$, a contradiction. Therefore, there exists $a_F\in K$ such that $\bigcap\limits_{A\in F}cl(g(A))=\{a_{{\textstyle\mathstrut}F}\}$.\\
\textbf{Claim 2:} Define $\widetilde{f}:\mathcal{U}(\mathcal{P}(X))\to K$ by $\widetilde{f}(F)=a_{{\textstyle\mathstrut}F}$. Then $\widetilde{f}$ is an extension of $f$.\\
\textbf{Proof of claim 2:} For $x\in X$, then $\widetilde{f}(F_x)=a_{{\textstyle\mathstrut}F_x}$. On the other hand, $F_x\in \alpha(A)$ for all $A\in F_x$ and hence $f(F_x)\in \bigcap\limits_{A\in F}f( \alpha(A)))\subset \bigcap\limits_{A\in F}cl(f( \alpha(A)))=\{a_{{\textstyle\mathstrut}F_x}\}$.\\
\textbf{Claim 3:} $cl(\widetilde{f}(U_A))=cl(g(A))$ for all $A\in \mathcal{P}(X)$; where: $$U_A=\{F\in \mathcal{U}(\mathcal{P}(X)):A\in F\}.$$\\
\textbf{Proof of claim 3:} Let $a\in \widetilde{f}(U_A)$, then there exists $F\in U_A$ such that $a=a_{{\textstyle\mathstrut}F}$ where $\{a_{{\textstyle\mathstrut}F}\}=\bigcap\limits_{A\in F}cl(g(A))\subset cl(g(A))$, and hence $cl(\widetilde{f}(U_A))\subset cl(g(A))$. On the other hand, since $F_a\in U_A$ for all $a\in A$, thus $g(a)=f(F_a)=\widetilde{f}(F_a)\in \widetilde{f}(U_A)$. Therefore, $cl(g(A))\subset cl(\widetilde{f}(U_A))$.\\
\textbf{Claim 4:} $\widetilde{f}:\mathcal{U}(\mathcal{P}(X))\to K$ is continuous. \\
\textbf{Proof of claim 4:} By \ref{CharOfContByFiltersConv}, it suffices to show that if $\mathcal{F}$ is a filter of $\mathcal{P}(\mathcal{U}(\mathcal{P}(X)))$ that converges to $F\in \mathcal{U}(\mathcal{P}(X))$, then $\mathcal{F}_{\widetilde{f}}$ converges to $\widetilde{f}(F)$. Notice that for all $A\in F$, $F\in U_A$, and since $\mathcal{F}$ converges to $F$ and $ U_A$ is a neighborhood of $F$, $ U_A$ must be in $\mathcal{F}$. Consequently, $\{\widetilde{f}(U_A):A\in F\}\subset \mathcal{F}_{\widetilde{f}}$. Since $\widetilde{f}(U_A)\subset cl(\widetilde{f}(U_A))=cl(g(A))$ and $\mathcal{F}_{\widetilde{f}}$ is a filter, so $\{cl(g(A)):A\in F\}\subset \mathcal{F}_{\widetilde{f}}$. Now, let $O$ be a neighborhood of $\widetilde{f}(F)=a_{{\textstyle\mathstrut}F}$. Choose a relatively compact neighborhood $V$ of $a_{{\textstyle\mathstrut}F}\in \bigcap\limits_{A\in F}cl(g(A))$ such that $V\subset cl(V)\subset O$. As $V\cap g(A)\neq\emptyset$ for all $A\in F$ and that $F_g$ is an ultrafilter, hence, by \ref{UltraAreMaxSatTheFPP}, $V\in F_g$ and therefore there exists $B\in F$ such that $V=g(B)$. Thus, $cl(V)\in \mathcal{F}_{\widetilde{f}}$ and $cl(V)\subset O$ which implies that $\mathcal{F}_{\widetilde{f}}$ converges to $\widetilde{f}(F)$. As a result, $\mathcal{U}(\mathcal{P}(X))$ is the Stone-$\breve{C}$ech compactification of $X$.
\end{proof}
\begin{Corollary}\label{WaysToConsSTCompOfDiscrete}
Let $X$ be a non-empty set equipped with the discrete topology. The Stone-$\breve{C}$ech compactification $\beta X$ of $X$ is totally disconnected. Moreover, $\beta X$ is homeomorphic to each of the following Stone's spaces:\\
($1$) $(\mathcal{U}(\mathcal{P}(X)),\mathcal{T}_{\mathcal{U}(\mathcal{P}(X))})$.\\
($2$) $Hom(\mathcal{P}(X),\mathbb{Z}_2)$.\\
($3$) $Spec(R_{\mathcal{P}(X)})$, such that $(R_{\mathcal{P}(X)},+,\cdot)$ is the ring  where $R_{\mathcal{P}(X)}=\mathcal{P}(X)$ and for all $A,B\in \mathcal{P}(X)$:\\
$\bullet$ $A+B:=A\Delta B=(A\setminus B)\cup (B\setminus A)$;\\
$\bullet$ $A\cdot B:=A\cap B$.
\end{Corollary}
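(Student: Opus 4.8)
The plan is to deduce the corollary by chaining together results already proved, since the three spaces in question are all homeomorphic copies of $\mathcal{U}(\mathcal{P}(X))$ and the latter has been identified as $\beta X$. First I would note that $B:=\mathcal{P}(X)$ is a Boolean algebra by \ref{PowersetBooleanAlg}, and that every singleton of a discrete $X$ belongs to $\mathcal{P}(X)$, so $\mathcal{P}(X)$ separates points; hence \ref{BooleanThatSepPointsDetrComption} applies and \ref{StoneChecCompOfDiscSpace} tells us that $\mathcal{U}(\mathcal{P}(X))$ is the Stone-$\breve{C}$ech compactification $\beta X$. For total disconnectedness I would invoke \ref{SpaceOfUltraOfBoolean}: the space $\mathcal{U}(\mathcal{P}(X))$ is a Stone's space, hence totally disconnected by definition, and transporting this property across the homeomorphism shows that $\beta X$ is totally disconnected.

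It then remains to check the three concrete descriptions. Statement (1) is simply \ref{StoneChecCompOfDiscSpace} restated. Statement (2) follows from \ref{StoneAsSetOfHom}, which supplies a homeomorphism $Hom(\mathcal{P}(X),\mathbb{Z}_2)\cong\mathcal{U}(\mathcal{P}(X))=\beta X$. For statement (3) I would first verify that the ring $(R_{\mathcal{P}(X)},+,\cdot)$ written out in the statement — where $A+B=A\Delta B$ and $A\cdot B=A\cap B$ — coincides with the Boolean ring induced from $\mathcal{P}(X)$ in the sense of Section 5: indeed the induced sum is $A\cdot(\neg B)+_{\mathcal{P}(X)}B\cdot(\neg A)=\big(A\cap(X\setminus B)\big)\cup\big(B\cap(X\setminus A)\big)=(A\setminus B)\cup(B\setminus A)=A\Delta B$, while the product is unchanged. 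Once this identification is in hand, \ref{StoneAsSpec} yields $Spec(R_{\mathcal{P}(X)})\cong\mathcal{U}(\mathcal{P}(X))=\beta X$, and composing the homeomorphisms gives the full chain.

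The argument is essentially bookkeeping, so I do not expect a genuine obstacle; the only point demanding a moment's attention is the routine but necessary verification that the explicitly exhibited ring structure $(\mathcal{P}(X),\Delta,\cap)$ really is the induced Boolean ring $R_{\mathcal{P}(X)}$ appearing in \ref{StoneAsSpec}, after which \ref{StoneChecCompOfDiscSpace}, \ref{SpaceOfUltraOfBoolean}, \ref{StoneAsSetOfHom} and \ref{StoneAsSpec} combine immediately.
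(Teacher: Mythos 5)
Your proposal is correct and follows essentially the same route as the paper, which simply cites \ref{StoneChecCompOfDiscSpace}, \ref{StoneAsSetOfHom} and \ref{StoneAsSpec}; you merely spell out the (correct and worthwhile) check that $(\mathcal{P}(X),\Delta,\cap)$ is the induced Boolean ring $R_{\mathcal{P}(X)}$, which the paper leaves implicit.
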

\begin{proof}
Apply \ref{StoneChecCompOfDiscSpace}, \ref{StoneAsSetOfHom} and \ref{StoneAsSpec}.
\end{proof}
\begin{Corollary}
Let $X$ be a non-empty set equipped with the discrete topology, and $Y$ be a compactification of $X$, then $Y$ is totally disconnected. 
\end{Corollary}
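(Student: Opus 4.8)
The plan is to realize $Y$ as the Stone's space of a point-separating sub-Boolean algebra of $\mathcal{P}(X)$ and then invoke Theorem~\ref{SpaceOfUltraOfBoolean}: once $Y$ is known to be homeomorphic to some $\mathcal{U}(\mathcal{B})$ with $\mathcal{B}\subseteq\mathcal{P}(X)$ a sub-Boolean algebra separating points of $X$ (this is the characterization announced as \ref{ComptionOFDiscreteSp}), then $Y$ is a Stone's space, hence in particular totally disconnected. So the real work is to identify $\mathcal{B}$ and the homeomorphism.

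First I would exploit discreteness. Writing $\alpha\colon X\to Y$ for a dense embedding and identifying $X$ with $\alpha(X)$, each singleton $\{x\}$ is open in $X$, so $\{x\}=X\cap O$ for some $O$ open in $Y$; since $X$ is dense, $X\cap O$ is dense in $O$, whence $O\subseteq cl_Y(X\cap O)=cl_Y(\{x\})=\{x\}$, so $\{x\}$ is clopen in $Y$. Thus $X$ is open and discrete in $Y$ and $Y\setminus X$ is closed. Now $W\mapsto W\cap X$ is a homomorphism of Boolean algebras $Clop(Y)\to\mathcal{P}(X)$, and it is injective, because for clopen $W$ the set $W\cap X$ is dense in the open set $W$ and $W$ is closed, so $W=cl_Y(W\cap X)$. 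Let $\mathcal{B}:=\{W\cap X:W\in Clop(Y)\}\subseteq\mathcal{P}(X)$; it is a sub-Boolean algebra of $\mathcal{P}(X)$ isomorphic to $Clop(Y)$, and it separates points of $X$ since it contains every clopen singleton $\{x\}$.

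Next I would define $\theta\colon Y\to\mathcal{U}(\mathcal{B})$ by $\theta(y):=\{W\cap X:W\in Clop(Y),\ y\in W\}$. Just as in the proof of Lemma~\ref{UltrafiltersOfTheClopenBoolean}(a), $\{W\in Clop(Y):y\in W\}$ is an ultrafilter of $Clop(Y)$ for every $y\in Y$, so, transporting along $Clop(Y)\cong\mathcal{B}$, $\theta(y)$ is an ultrafilter of $\mathcal{B}$ and $\theta$ is well defined. It is continuous, since for $A=W_A\cap X\in\mathcal{B}$ one has $\theta^{-1}(U_A)=W_A\in Clop(Y)$. It is surjective: an ultrafilter $G$ of $\mathcal{B}$ corresponds to an ultrafilter of $Clop(Y)$ whose members form a family of closed subsets of the compact space $Y$ satisfying the finite intersection property (via \ref{SetsSFPPContainedInProperFilter}), hence with nonempty intersection, and any common point $y$ satisfies $\theta(y)=G$. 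Since $Y$ and $\mathcal{U}(\mathcal{B})$ are compact Hausdorff, it remains only to show that $\theta$ is injective; then $\theta$ is a homeomorphism and the argument is finished.

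Injectivity of $\theta$ is the point I expect to be the main obstacle: $\theta(p)=\theta(q)$ says precisely that no clopen subset of $Y$ separates $p$ from $q$, so one must prove that $Clop(Y)$ separates the points of $Y$. When $p$ or $q$ lies in $X$ this is the clopen-singleton remark above, so the crux is two distinct points $p,q$ of the remainder $Y\setminus X$. For these I would look for $A\subseteq X$ with $cl_Y(A)\cap cl_Y(X\setminus A)=\emptyset$ — which forces $cl_Y(A)=Y\setminus cl_Y(X\setminus A)$ to be clopen — and with $p\in cl_Y(A)$, $q\notin cl_Y(A)$: take, using normality, disjoint open neighbourhoods $U\ni p$, $V\ni q$ with $cl_Y(U)\cap cl_Y(V)=\emptyset$, set $A=U\cap X$ so that $p\in cl_Y(A)$ and $q\in cl_Y(X\setminus A)$, and then use density of $X$ to try to force these two closures to be disjoint. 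Controlling the parts of $cl_Y(A)$ and $cl_Y(X\setminus A)$ lying over the closed remainder $Y\setminus X$ is the delicate step, and it is exactly here that the density of $X$ — and in effect the equivalence with \ref{ComptionOFDiscreteSp} — must be brought to bear.
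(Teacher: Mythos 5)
Everything you do before the injectivity step is correct and, in fact, more careful than the paper: the observation that each singleton of $X$ is clopen in $Y$, the injectivity of $W\mapsto W\cap X$ on $Clop(Y)$ via $W=cl_Y(W\cap X)$, the identification of $\mathcal{B}$ with $Clop(Y)$, and the continuity and surjectivity of $\theta$ are all sound. You have also correctly isolated the crux: injectivity of $\theta$ is exactly the assertion that $Clop(Y)$ separates the points of $Y$. But this is not a fillable gap. That assertion --- and with it the corollary itself --- fails for general compactifications of an infinite discrete space, so the ``delicate step'' you flagged is genuinely impossible.

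Concretely, let $(q_n)_{n\in\mathbb{N}}$ enumerate $\mathbb{Q}\cap[0,1]$ and set
$$Y:=\bigl([0,1]\times\{0\}\bigr)\cup D,\qquad D:=\{(q_n,1/n):n\in\mathbb{N}\}\subset\mathbb{R}^2 .$$
Then $Y$ is closed and bounded, hence compact Hausdorff; every point of $D$ is isolated in $Y$ (only finitely many points of $D$ have second coordinate above any fixed $\varepsilon>0$); and $D$ is dense in $Y$ because the $q_n$ are dense in $[0,1]$. So $Y$ is a compactification of the countable discrete space $D\cong\mathbb{N}$, yet it contains the connected segment $[0,1]\times\{0\}$ and is therefore not totally disconnected; no clopen subset of $Y$ separates two points of that segment, so your $\theta$ cannot be injective. (More generally, every continuous image of $\beta X\setminus X$ arises as the remainder of a compactification of the discrete space $X$, and for infinite $X$ these images include $[0,1]$.) The paper's own one-line proof --- that $Y$ is a quotient of the totally disconnected space $\beta X$ --- breaks down at the same point, since continuous images of compact totally disconnected spaces need not be totally disconnected (the Cantor set maps onto $[0,1]$). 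The statement is salvageable only with an extra hypothesis, e.g.\ that $Y$ is zero-dimensional, or for the compactifications $\mathcal{U}(\mathcal{B})$ produced by \ref{BooleanThatSepPointsDetrComption}, where your argument does go through verbatim.
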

\begin{proof}
Let $\beta X$ be the Stone-$\breve{C}$ech compactification of $X$, then $Y$ is a quotient space of the totally disconnected space $\beta X$, and therefore $Y$ is totally disconnected.
\end{proof}
When $X$ is infinite, its Stone-$\breve{C}$ech compactification is non-metrizable. 
\begin{Theorem}
Let $X$ be an infinite discrete space. The Stone-$\breve{C}$ech compactification $\beta X$ is non-metrizable.
\end{Theorem}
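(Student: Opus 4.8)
The plan is to produce a sequence in $\beta X$ admitting no convergent subsequence; since every compact metrizable space is sequentially compact, this forces $\beta X$ to be non-metrizable. By \ref{StoneChecCompOfDiscSpace} we may take $\beta X = \mathcal{U}(\mathcal{P}(X))$, and we identify $X$ with the dense discrete subspace $\{F_x : x\in X\}$ through the embedding $\alpha$ of \ref{BooleanThatSepPointsDetrComption}. The only fact about this model that I need is that for every $A\subseteq X$ the closure of $\alpha(A)$ in $\beta X$ equals the clopen set $U_A = \{F\in\mathcal{U}(\mathcal{P}(X)):A\in F\}$ (clopen by \ref{PropertiesOfBasisElements}): indeed $\alpha(A)\subseteq U_A$ and $U_A$ is closed, while conversely every basic neighborhood $U_C$ of a point $F\in U_A$ meets $\alpha(A)$, since $A\cap C\in F$ is non-empty. (This is exactly the Proposition characterizing $A\in F$ by $F\in cl(Pr(A))$, together with \ref{DenseSupOfStone}.)

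Because $X$ is infinite, fix an injective sequence $(x_n)_{n\in\mathbb{N}}$ in $X$; I claim $(\alpha(x_n))_n$ has no convergent subsequence. Let $(\alpha(x_{n_k}))_k$ be an arbitrary subsequence and set $y_k:=x_{n_k}$, so the $y_k$ are pairwise distinct and $A:=\{y_{2j}:j\in\mathbb{N}\}$, $B:=\{y_{2j+1}:j\in\mathbb{N}\}$ are disjoint subsets of $X$. Suppose, for contradiction, that $\alpha(x_{n_k})\to F$ for some $F\in\beta X$. Then the further subsequences $(\alpha(y_{2j}))_j$ and $(\alpha(y_{2j+1}))_j$ also converge to $F$; their terms lie respectively in $\alpha(A)\subseteq U_A$ and $\alpha(B)\subseteq U_B$, and $U_A, U_B$ are closed, so $F\in U_A\cap U_B$. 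But $U_A\cap U_B=U_{A\cap B}=U_\emptyset=\emptyset$ by \ref{PropertiesOfBasisElements}, a contradiction. Thus $(\alpha(x_n))_n$ has no convergent subsequence, $\beta X$ is not sequentially compact, and hence $\beta X$ is not metrizable.

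I expect no genuine obstacle here; what must be handled correctly is the identification $cl(\alpha(A))=U_A$, which is precisely what turns a partition $A\sqcup B$ of the range of a subsequence into the disjointness $U_A\cap U_B=\emptyset$ that blocks a common limit, combined with the routine facts that every subsequence of a convergent sequence has the same limit and that a closed set containing all terms of a convergent sequence contains its limit. One could instead argue by cardinality, since $|\beta X| = 2^{2^{|X|}} > 2^{\aleph_0}$ while a compact metrizable space is separable and hence of cardinality at most $2^{\aleph_0}$; but computing $|\beta X|$ requires an independent-family construction not developed in this exposition, whereas the argument above relies only on the clopen sets $U_A$ already at hand.
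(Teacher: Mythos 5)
Your proof is correct, and it reaches the contradiction by a different mechanism than the paper, even though both arguments ultimately rest on the same even/odd splitting of an injective sequence in $X$. The paper assumes metrizability, extracts a sequence of distinct points of $X$ converging to some $\omega\in\beta X\setminus X$, and then invokes the universal property of \ref{StoneChecCompOfDiscSpace}: the characteristic function of the even-indexed terms extends continuously to $\beta X$, and evaluating the extension at $\omega$ along the even and odd subsequences yields $1=0$. You instead work inside the concrete model $\mathcal{U}(\mathcal{P}(X))$ and use only the algebra of the basic clopen sets: $\alpha(A)\subseteq U_A$ with $U_A$ closed, and $U_A\cap U_B=U_{A\cap B}=U_\emptyset=\emptyset$ for disjoint $A,B$, so the two subsequences cannot share a limit. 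Your route is more self-contained --- it bypasses the long four-claim extension theorem entirely and needs only \ref{PropertiesOfBasisElements} --- and it also makes explicit the sequential-compactness step that the paper leaves implicit (the paper simply asserts the existence of a convergent sequence of distinct points with limit outside $X$, which tacitly uses that a compact metrizable space is sequentially compact and that points of $X$ are isolated in $\beta X$). What the paper's argument buys in exchange is model-independence: it applies verbatim to any space satisfying the extension property, without first identifying $\beta X$ with $\mathcal{U}(\mathcal{P}(X))$ via \ref{WaysToConsSTCompOfDiscrete}.
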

\begin{proof}
Seeking contradiction, assume that $\beta X$ is metrizable. Choose $\omega\in \beta X\setminus X$ and a sequence $(x_n)_{n\in \mathbb{N}}$ of distinct elements of $X$ that converges to $\omega$. Let $A:=\{x_{2n}:n\in\mathbb{N}\}$, and $\chi_A:X\to \{0,1\}$ be the characteristic function of $A$. Since $\chi_A$ is continuous, it admits a continuous extension $\widetilde{\chi_A}:\beta X\to \{0,1\}$. On one hand, we have $\widetilde{\chi_A}(cl_{\beta X}A)=\{1\}$, and hence $\widetilde{\chi_A}(\omega)=1$. On the other hand, we have $\widetilde{\chi_A}(x_{2n-1})=0$ for all $n\in\mathbb{N}$ and $(x_{2n-1})_{n\in\mathbb{N}}$ converges to $\omega$, a contradiction.
\end{proof}
Now we prove the converse of \ref{BooleanThatSepPointsDetrComption}.
\begin{Theorem}\label{ForAComptionThereExistsAbooleanDetIt}
Let $Y$ be a compactification of the discrete space $X$; there exists a sub-Boolean algebra $\mathcal{B}\subset \mathcal{P}(X)$ that separates points of $X$ such that $Y$ is homeomorphic to $\mathcal{U}(\mathcal{B})$.
\end{Theorem}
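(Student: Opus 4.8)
The plan is to recognize $Y$ as the ultrafilter space of its own clopen Boolean algebra and then to realize that Boolean algebra concretely inside $\mathcal{P}(X)$ by pulling clopen sets back along the dense embedding. Let $\alpha:X\to Y$ be the embedding with $\alpha(X)$ dense in $Y$. Since $Y$ is a compactification it is compact Hausdorff, and by the corollary just proved every compactification of a discrete space is totally disconnected; hence $Y$ is a Stone's space, and by the Stone's representation theorem for Stone's spaces \ref{StoneRepThOfStoneSpace} the assignment $x\mapsto F^x$ is a homeomorphism of $Y$ onto $\mathcal{U}(Clop(Y))$. So it will suffice to exhibit a sub-Boolean algebra $\mathcal{B}\subset\mathcal{P}(X)$ that separates points of $X$ together with an isomorphism $Clop(Y)\to\mathcal{B}$: then \ref{IsoBooleansHaveHomeoStonesSpaces} gives that $\mathcal{U}(Clop(Y))$ and $\mathcal{U}(\mathcal{B})$ are homeomorphic, and composing with the homeomorphism above produces the desired homeomorphism $Y\to\mathcal{U}(\mathcal{B})$.

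First I would set $\mathcal{B}:=\{\alpha^{-1}(C):C\in Clop(Y)\}$ and define $\rho:Clop(Y)\to\mathcal{P}(X)$ by $\rho(C)=\alpha^{-1}(C)$. Since preimages commute with unions, intersections and complements and send $\emptyset,Y$ to $\emptyset,X$, the image $\mathcal{B}=\rho(Clop(Y))$ is a sub-Boolean algebra of $\mathcal{P}(X)$ and $\rho:Clop(Y)\to\mathcal{B}$ is a surjective homomorphism. To see that $\mathcal{B}$ separates points of $X$, take distinct $x,y\in X$; then $\alpha(x)\neq\alpha(y)$, and because $Clop(Y)$ is a basis of the Stone's space $Y$ there is $C\in Clop(Y)$ with $\alpha(x)\in C$ and $\alpha(y)\notin C$, so $A:=\rho(C)\in\mathcal{B}$ satisfies $x\in A$ and $y\in X\setminus A$.

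The one point that needs care, and the only genuinely non-formal step, is the injectivity of $\rho$; this is precisely where density enters. Suppose $\rho(C_1)=\rho(C_2)$, equivalently $C_1\cap\alpha(X)=C_2\cap\alpha(X)$ (using that $\alpha$ is injective). For any clopen $C\subset Y$, the set $C\cap\alpha(X)$ is the intersection of an open set with a dense set, hence dense in $C$, so $C\subset cl(C\cap\alpha(X))$; since $C$ is also closed, $cl(C\cap\alpha(X))=C$. Applying this to $C_1$ and to $C_2$ yields $C_1=cl(C_1\cap\alpha(X))=cl(C_2\cap\alpha(X))=C_2$, so $\rho$ is injective and therefore an isomorphism $Clop(Y)\to\mathcal{B}$. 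Combining this with the two homeomorphisms noted above completes the argument; everything apart from the density computation is bookkeeping with the representation theorems already established (and, if one prefers, the chain of isomorphisms and homeomorphisms can be packaged through \ref{IsoAndHomeo}).
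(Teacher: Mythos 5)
Your proposal is correct and follows essentially the same route as the paper: both take $\mathcal{B}$ to be the traces of the clopen sets of $Y$ on (the image of) $X$, verify that this trace map $Clop(Y)\to\mathcal{B}$ is an isomorphism using density of $X$ for injectivity and zero-dimensionality of $Y$ for point separation, and then transfer back via the representation theorems (the paper packages the last step through \ref{IsoAndHomeo}, as you note). Your closure argument for injectivity and your explicit appeal to the corollary that compactifications of discrete spaces are totally disconnected are just slightly more spelled-out versions of what the paper does.
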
 
\begin{proof}
Without loss of generality we may assume that $X\subset Y$. Put $\mathcal{B}:=\{A\cap X: A\in Clop(Y)\}$. It is straightforward to see that $\mathcal{B}\subset \mathcal{P}(X)$ is a sub-Boolean algebra that separates points of $X$. Therefore, $\mathcal{U}(\mathcal{B})$ is a compactification of $X$. Consider the a map $\varphi:Clop(Y)\to \mathcal{B}$ that sends every $A\in Clop(Y)$ to $A\cap X$. It is evident to see that $\varphi:Clop(Y)\to \mathcal{B}$ is a surjective homomorphism. Furthermore, let $A,B\in Clop(Y)$ be distinct. Without loss of generality assume that $A\setminus B$ is non-empty. Since $A\setminus B$ is an open subset of $Y$ and hence $(A\setminus B)\cap X\neq \emptyset$. Thus, $A\cap X\neq B\cap X$, which shows that $\varphi:Clop(Y)\to \mathcal{B}$ is injective. By \ref{IsoAndHomeo}, $Y$ is homeomorphic to $\mathcal{U}(\mathcal{B})$.
\end{proof}
\begin{Corollary}\label{ComptionOFDiscreteSp}
Let $X$ be a non-empty set equipped with the discrete topology. A topological space $Y$ is a compactification of $X$ if and only if there exists a sub-Boolean algebra $\mathcal{B}\subset \mathcal{P}(X)$ that separates points of $X$ such that $Y$ is homeomorphic to $\mathcal{U}(\mathcal{B})$.
\end{Corollary}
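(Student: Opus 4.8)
The plan is to observe that this Corollary is exactly the conjunction of the two theorems immediately preceding it, namely \ref{BooleanThatSepPointsDetrComption} and \ref{ForAComptionThereExistsAbooleanDetIt}, so essentially no new argument is needed; I would only spell out the small amount of bookkeeping showing that each implication passes cleanly through a homeomorphism.

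For the ``if'' direction, suppose $\mathcal{B}\subset\mathcal{P}(X)$ is a sub-Boolean algebra that separates points of $X$ and $Y$ is homeomorphic to $\mathcal{U}(\mathcal{B})$, say via $h\colon\mathcal{U}(\mathcal{B})\to Y$. By \ref{BooleanThatSepPointsDetrComption}, $\mathcal{U}(\mathcal{B})$ is a compactification of $X$: it is compact Hausdorff (being a Stone's space by \ref{SpaceOfUltraOfBoolean}) and the embedding $\alpha\colon x\mapsto F_x$ identifies $X$ with the dense subspace $\{F_x:x\in X\}$. Since a homeomorphism preserves compactness, the Hausdorff property, and density of a subset, the space $Y$ is compact Hausdorff and $h\circ\alpha\colon X\to Y$ embeds $X$ onto a dense subspace of $Y$; hence $Y$ is a compactification of $X$.

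For the ``only if'' direction, if $Y$ is a compactification of the discrete space $X$, then \ref{ForAComptionThereExistsAbooleanDetIt} directly supplies a sub-Boolean algebra $\mathcal{B}=\{A\cap X:A\in Clop(Y)\}$ of $\mathcal{P}(X)$ that separates points of $X$ together with a homeomorphism $Y\cong\mathcal{U}(\mathcal{B})$, which is precisely what is asserted. Since both halves are already proved, there is no genuine obstacle here; the only point requiring (routine) care is the invariance of the notion ``compactification of $X$'' under homeomorphism of the ambient space, i.e.\ transporting the dense embedded copy of $X$ along $h$.
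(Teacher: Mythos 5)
Your proposal is correct and follows exactly the paper's approach: the paper's proof is the one-line "Apply \ref{BooleanThatSepPointsDetrComption} and \ref{ForAComptionThereExistsAbooleanDetIt}," and your additional remarks about transporting the dense embedded copy of $X$ along the homeomorphism are just the routine bookkeeping the paper leaves implicit.
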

\begin{proof}
Apply \ref{BooleanThatSepPointsDetrComption} and \ref{ForAComptionThereExistsAbooleanDetIt}.
\end{proof}
Recall if $Y$ and $Z$ are compactifications of $X$, we say that $Y$ \textbf{dominates} $Z$, or $Z$ is \textbf{dominated by} $Y$, if there exists a continuous function $f:Y\to Z$ such that $f|_{X}=id_{X}$. The continuous function $f:Y\to Z$ is called a \textbf{domination}.
\begin{Theorem}
Let $Y:=\mathcal{U}(\mathcal{B}_Y)$ and $Z:=\mathcal{U}(\mathcal{B}_Z)$ be compactifications of the discrete space $X$, where $\mathcal{B}_Y$ and $\mathcal{B}_Z$ sub-Boolean algebras of $\mathcal{P}(X)$ that separate points of $X$. $Y$ dominates $Z$ if and only if $\mathcal{B}_Z$ is isomorphic to a sub-Boolean algebra of $\mathcal{B}_Y$. 
\end{Theorem}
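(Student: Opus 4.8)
The plan is to prove the sharper equivalence: $Y$ dominates $Z$ if and only if $\mathcal{B}_Z\subseteq\mathcal{B}_Y$ as sub-Boolean algebras of $\mathcal{P}(X)$; the stated version then follows, a sub-Boolean algebra being in particular isomorphic (via the identity) to one. I would fix notation first: write $\alpha_Y\colon X\to Y=\mathcal{U}(\mathcal{B}_Y)$ and $\alpha_Z\colon X\to Z=\mathcal{U}(\mathcal{B}_Z)$ for the embeddings of \ref{BooleanThatSepPointsDetrComption}, so that $\alpha_Y(x)=F^Y_x:=\{A\in\mathcal{B}_Y:x\in A\}$ and similarly over $Z$, and for $A\in\mathcal{B}_Y$ put $U^Y_A=\{F\in\mathcal{U}(\mathcal{B}_Y):A\in F\}$. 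Two facts get used repeatedly: the tautology $\alpha_Y(x)\in U^Y_A\iff x\in A$ (and its analogue over $Z$), and, from \ref{StoneRepThOfBooleanAlg}, that $A\mapsto U^Y_A$ is an isomorphism $\mathcal{B}_Y\to Clop(Y)$, hence injective.

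For the forward implication I would take a domination $f\colon Y\to Z$, i.e.\ a continuous map with $f\circ\alpha_Y=\alpha_Z$. For $A\in\mathcal{B}_Z$ the set $f^{-1}(U^Z_A)$ is clopen in $Y$, so by \ref{ClopenSetsAreBasisElements} it equals $U^Y_B$ for a unique $B\in\mathcal{B}_Y$; call it $\psi(A)$. The resulting map $\psi\colon\mathcal{B}_Z\to\mathcal{B}_Y$ is a homomorphism, being the composite of the Stone isomorphism $\mathcal{B}_Z\to Clop(Z)$, the homomorphism $\widetilde f\colon Clop(Z)\to Clop(Y)$ induced by $f$ (\ref{ClopBoolean}), and the inverse Stone isomorphism $Clop(Y)\to\mathcal{B}_Y$. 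The key computation is that $\psi$ is the inclusion: for $x\in X$ one has $x\in\psi(A)\iff\alpha_Y(x)\in U^Y_{\psi(A)}=f^{-1}(U^Z_A)\iff f(\alpha_Y(x))=\alpha_Z(x)\in U^Z_A\iff x\in A$, so $\psi(A)=A$ as subsets of $X$. In particular every element of $\mathcal{B}_Z$ belongs to $\mathcal{B}_Y$, so $\mathcal{B}_Z\subseteq\mathcal{B}_Y$.

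For the converse I would start from $\mathcal{B}_Z\subseteq\mathcal{B}_Y$ and feed the inclusion homomorphism $\iota\colon\mathcal{B}_Z\hookrightarrow\mathcal{B}_Y$ into \ref{IsoBooleansHaveHomeoStonesSpaces}, which produces a continuous map $\widetilde\iota\colon\mathcal{U}(\mathcal{B}_Y)\to\mathcal{U}(\mathcal{B}_Z)$ given by $F\mapsto\iota^{-1}(F)=F\cap\mathcal{B}_Z$. Then I would check $\widetilde\iota$ is a domination: for $x\in X$, $\widetilde\iota(\alpha_Y(x))=\{A\in\mathcal{B}_Y:x\in A\}\cap\mathcal{B}_Z=\{A\in\mathcal{B}_Z:x\in A\}=\alpha_Z(x)$, so $\widetilde\iota\circ\alpha_Y=\alpha_Z$, i.e.\ $\widetilde\iota|_X=\mathrm{id}_X$. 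Hence $Y$ dominates $Z$.

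There is no serious obstacle --- the whole proof runs on the representation results \ref{StoneRepThOfBooleanAlg}, \ref{IsoBooleansHaveHomeoStonesSpaces} and \ref{ClopenSetsAreBasisElements} --- but the one step that carries content, and that I would write out carefully, is the chain of equivalences identifying the induced homomorphism $\psi$ with the concrete inclusion $\mathcal{B}_Z\hookrightarrow\mathcal{B}_Y$; it requires juggling the two embeddings $\alpha_Y,\alpha_Z$, the two Stone isomorphisms, and the relation $f\circ\alpha_Y=\alpha_Z$ simultaneously. This is also where I would note that, since $\mathcal{B}_Y$ and $\mathcal{B}_Z$ both sit inside $\mathcal{P}(X)$, the hypothesis of the converse must be read as the concrete inclusion $\mathcal{B}_Z\subseteq\mathcal{B}_Y$ rather than an abstract isomorphism onto a subalgebra: the latter would only give a continuous surjection $Y\to Z$ with no reason to fix $X$ pointwise, so the two formulations genuinely differ.
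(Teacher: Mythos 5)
Your proof of the sharpened statement (domination is equivalent to the literal containment $\mathcal{B}_Z\subseteq\mathcal{B}_Y$ inside $\mathcal{P}(X)$) is correct, and the forward direction genuinely differs from the paper's. The paper only uses surjectivity of a domination $f$ to conclude that the induced map $Clop(Z)\to Clop(Y)$ is injective, and hence lands only on ``$\mathcal{B}_Z$ is isomorphic to a subalgebra of $\mathcal{B}_Y$''; you additionally exploit $f\circ\alpha_Y=\alpha_Z$ to identify the induced embedding with the inclusion, via the chain $x\in\psi(A)\iff\alpha_Y(x)\in U^Y_{\psi(A)}=f^{-1}(U^Z_A)\iff\alpha_Z(x)\in U^Z_A\iff x\in A$. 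That computation is sound (the tautology $\alpha_Y(x)\in U^Y_B\iff x\in B$ is immediate from $\alpha_Y(x)=F^Y_x$, and uniqueness of $B$ comes from injectivity of $B\mapsto U^Y_B$ in \ref{StoneRepThOfBooleanAlg}), and it buys strictly more information. Your converse coincides with the paper's: the inclusion induces $F\mapsto F\cap\mathcal{B}_Z$, which visibly carries $F^Y_x$ to $F^Z_x$.

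Your closing remark pinpoints a genuine defect in the theorem as stated and in the paper's own converse, which begins ``we may assume that $\mathcal{B}_Z\subset\mathcal{B}_Y$'' --- not a harmless normalization when the hypothesis is only an abstract isomorphism onto a subalgebra, since an abstract embedding $\mathcal{B}_Z\hookrightarrow\mathcal{B}_Y$ produces a continuous surjection $Y\to Z$ with no reason to fix $X$ pointwise. Indeed the literal ``if'' direction fails: take $X=\mathbb{N}$, let $\mathcal{B}_Y$ be generated by the finite sets together with the set of even numbers, and $\mathcal{B}_Z$ by the finite sets together with the set of primes. These Boolean algebras are isomorphic (both yield two-point compactifications), so $\mathcal{B}_Z$ is isomorphic to a subalgebra of $\mathcal{B}_Y$; but no domination $Y\to Z$ exists, because the end of $Y$ determined by the odd numbers lies in the closure of the odd primes and in the closure of the odd composites, forcing its image to be the prime end and the non-prime end of $Z$ simultaneously. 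So the containment formulation you prove is the correct statement, and your argument establishes it.
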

\begin{proof}
By \ref{ClopBoolean}, a continuous function $f:\mathcal{U}(\mathcal{B}_Y)\to \mathcal{U}(\mathcal{B}_Z)$ induces a homomorphism $\widetilde{f}:Clop(\mathcal{U}(\mathcal{B}_Z))\to Clop(\mathcal{U}(\mathcal{B}_Y))$ given by $\widetilde{f}(A)=f^{-1}(A)$ for all $A\in Clop(\mathcal{U}(\mathcal{B}_Z))$. It can be easily seen that if   $f:\mathcal{U}(\mathcal{B}_Y)\to \mathcal{U}(\mathcal{B}_Z)$ is surjective, then $\widetilde{f}:Clop(\mathcal{U}(\mathcal{B}_Z))\to Clop(\mathcal{U}(\mathcal{B}_Y))$ is injective. 
On the other hand, we may assume that $\mathcal{B}_Z\subset \mathcal{B}_Y$. Now the inclusion $\varphi:\mathcal{B}_Z\to \mathcal{B}_Y$ is a homomorphism. By \ref{IsoBooleansHaveHomeoStonesSpaces}, it induces a continuous map $\widetilde{\varphi}:\mathcal{U}(\mathcal{B}_Y)\to \mathcal{U}(\mathcal{B}_Z)$ that sends $F\in \mathcal{U}(\mathcal{B}_Y)$ to $\varphi^{-1}(F)=F\cap \mathcal{B}_Z$. Notice that every ultrafilter $F_Z$ of $\mathcal{B}_Z$ is contained in an ultrafilter $F_Y$ of $\mathcal{B}_Y$ with $F_Z=F_Y\cap \mathcal{B}_Z$ which shows that $\widetilde{\varphi}:\mathcal{U}(\mathcal{B}_Y)\to \mathcal{U}(\mathcal{B}_Z)$ is surjective. Moreover, since $X$ is homeomorphic to $\{F^{Y}_x:x\in X\}$ and homeomorphic to $\{F^{Z}_x:x\in X\}$ where $F^{Y}_x=\{A\in \mathcal{B}_Y:x\in A\}$ and $F^{Z}_x=\{A\in \mathcal{B}_Z:x\in A\}$, so $\widetilde{\varphi}:\mathcal{U}(\mathcal{B}_Y)\to \mathcal{U}(\mathcal{B}_Z)$ is a domination. 
\end{proof}

\end{document}